\newtheorem{theorem}{Theorem}
\newtheorem{remark}{Remark}
\newtheorem{lemma}{Lemma}
\newtheorem{case}{Case}
\begin{document}
	\begin{center}
		\textbf{\Large Bifurcation Patterns and Chaos Control in\\ Discrete-Time Coral Reef Model}\\
		\vskip 0.3cm
			{\large M. Priyanka$^a$,  P. Muthukumar$^{a}$\footnote{Corresponding Author;
					Email: pmuthukumargri@gmail.com, Phone: 91-451-2452371,
					Fax: 91-451-2454466.\\  priyankamurugangri@gmail.com (M. Priyanka).}, \\
				\vskip 0.1cm 
				$^a$Department of Mathematics,\\
				The Gandhigram Rural Institute (Deemed to be University),\\
				Gandhigram - 624 302, Tamil Nadu, India.}\\
				\vskip 1.3cm
		\end{center}

\begin{quote}
 \textbf{Abstract:}
 The reduction in coral reef densities, characterized by the proliferation of macroalgae, has emerged as a global threat. In this paper, we present a discrete-time coral reef dynamical model that incorporates macroalgae. We explore all ecologically possible equilibrium points for the proposed model. The conditions for the local stability of the interior equilibrium point are analyzed, which represents the coexistence of both coral and macroalgae. Furthermore, we investigate the model's behavior using the center manifold theorem and bifurcation theory. Our analysis reveals that the model undergoes codimension-one bifurcations, specifically period-doubling and Neimark-Sacker bifurcations. To address the chaos resulting from the emergence of the Neimark-Sacker bifurcation, we apply the OGY feedback control method and a hybrid control methodology. Finally, we provide numerical simulations not only to validate the obtained results but also to demonstrate the complex dynamic behaviors that arise. These behaviors include reversal period-doubling bifurcation, period-4, 8, and 24 bubble bifurcations, as well as chaotic behavior. 
\end{quote}

\noindent{\bf Keywords:} 
Bubble bifurcation; Coral reef model; Discrete model; OGY control; Period-doubling and Neimark-Sacker bifurcation;  Stability.

\noindent {\bf Mathematics subject classification 2020:} 37G10; 37G35; 86A08; 92D25.
 
\section{Introduction}
The biodiversity of coral reefs plays a crucial role in maintaining the environment's health and holds significant economic value. Due to their ecosystem functions, which include coastal storm defense and the preservation of fisheries and marine biodiversity, coral reefs are vital to aquatic environments. Additionally, coral reef-dependent fishing and tourism provide livelihoods for millions of people \cite{paper6-Li}. However, reefs are experiencing a severe decline; many of these populations are already considered seriously damaged and remain at risk of disappearing soon \cite{paper6-Mora}. Pristine reefs no longer exist, and despite local efforts to safeguard coral reefs over the past 30 years, regional-scale declines persist.

Scientists believe that millions of unknown organisms thrive in and around reefs. Future discoveries of new treatments depend on this biodiversity \cite{paper6-Doering,paper6-Ranjit}. Coral reef animals and plants are now used to create medicines that potentially treat diseases such as cancer, arthritis, human bacterial infections, viruses, and other conditions. Many studies have concentrated on the effects of macroalgae on coral reefs and the potential role of these interactions leading to coral demise \cite{paper6-Clements}.

The interaction between these species plays a significant role in the oxygen levels in the atmosphere \cite{paper6-Jokiel}. Consequently, fluctuations in the population of these species can contribute to global warming, potentially resulting in devastating consequences for life on Earth, even risking and leading to the extinction of various animal species, including humans. Macroalgal forests are highly productive, providing food and habitat for numerous species, including fish \cite{paper6-Melis,paper6-Panja}. Hence, it is crucial to mathematically analyze the interaction between coral and macroalgae for the continued existence of these species. The ongoing study of coral reefs and macroalgae presents current and future threats to humans, along with novel research ideas to support the management of these essential natural resources.

There are two categories of mathematical models for population dynamics: those defined by continuous-time differential equations and those defined by discrete-time difference equations. Generations with overlapping growth processes, such as the human population, are described by nonlinear differential equations. In other cases, population growth occurs over discrete time intervals with non-overlapping generations, as seen in 13-year periodical cicadas. Nonlinear difference equations mathematically describe this situation \cite{paper6-May}. 

Numerous recent studies suggest that discrete-time equations describe marine
dynamics more accurately and naturally \cite{paper6-Zhang, paper6-Han}. Discrete-time models significantly reduce computing time and retain the essential characteristics of their related continuous-time counterparts \cite{paper6-Xu,paper6-Alamin}. Moreover, they exhibit richer dynamics than continuous-time models, leading to chaotic behaviors from a biological perspective \cite{paper6-Zhu, paper6-Zelinka}. Therefore, in recent years, many researchers have focused on discrete-time population models. However, there remains a research gap in the discrete-time analysis of coral reef models.

The primary purpose of modeling population dynamics is to identify the controlling parameters and predict the expected outcomes when environmental parameters change \cite{paper6-Murray}. Model parameters determine the dynamical properties of equilibriums in nonlinear dynamical systems. Changes in the qualitative structure of the model corresponding to parameter fluctuations are known as bifurcation points \cite{paper6-Luo1}. Therefore, it is crucial to identify the variables that can cause instability and unpredictable behavior, such as chaos.

Moreover, detecting codimension-one bifurcations in a model allows one to anticipate global phenomena, including hysteresis, invariant tori, limit cycles, homoclinic bifurcations, and chaotic attractors \cite{paper6-Kuznetsov}. For this reason, many authors concentrate on bifurcation analysis and chaos control in population dynamical models \cite{paper6-Mukherjee,paper6-Rahman}.

Several researchers have studied chaotic discrete dynamical systems. Researchers in \cite{paper6-Singh} employs three chaos control methods to examine the fractional-ordered discretized two-dimensional Leslie-Gower prey-predator model. In \cite{paper6-Akhtar}, a non-standard finite difference scheme discretizes a continuous-time Leslie prey-predator model. Additionally, authors in \cite{paper6-Alamin} comprehensively investigate a discrete food web model enriched with mate-finding Allee effects and hunting cooperation to explore resilience, chaos, and bifurcations.

In this paper, we contribute to understanding a discrete-time coral reef model. To date, there has been no comprehensive qualitative study of a discrete-time coral reef model that considers the significance of growth rate on macroalgae. Therefore, this paper focuses on the stability and bifurcation analysis of a two-dimensional discrete-time coral reef model. This study employs center manifold theorem \cite{paper6-Robinson,paper6-Layek,paper6-Carr} and bifurcation theory \cite{paper6-Guckenheimer,paper6-Robinson,paper6-Layek} to analyze the discrete-time coral reef model within the interior of $\mathbb{R}^2_+$. We demonstrate the existence of flip and Neimark-Sacker bifurcation in the proposed model. The primary contribution of this paper is outlined as follows:
\begin{itemize}
	\item The proposed coral reef model comprises macroalgae, coral reefs, and algal turfs. The conditions for the local stability of the model's steady states are derived.
	\item The existence of flip and Neimark-Sacker bifurcation are discussed with the help of the center manifold theorem and bifurcation theory.
	\item OGY feedback control method is implemented to control the chaos resulting from the emergence of Neimark-Sacker bifurcation. Then, the theoretical analysis is validated using numerical simulations.
	\item Similar to existing studies \cite{paper6-Li,paper6-Fattahpour,paper6-Blackwood,paper6-Blackwood1,paper6-Rivero}, we investigated the interaction between coral reefs and macroalgae with discrete-time. Observing the model reveals complex dynamic behaviors and demonstrates a delayed response in bifurcation when subjected to an incremental increase in the intrinsic growth rate of macroalgae.
\end{itemize}

Paper organization: Section \ref{paper6-sec1} includes model formation and description. Section \ref{paper6-sec2} studies the existence and stability of the model's steady states. In section \ref{paper6-sec3}, we demonstrate that the model undergoes flip and Neimark-Sacker bifurcation around the interior equilibrium point. In Section \ref{paper6-sec4}, the OGY method and hybrid control procedures are employed to control the chaos due to the appearance of the Neimark-Sacker bifurcation. Finally, results and discussions were given in Section \ref{paper6-sec5}.

\section{Formulation of model with key assumptions}\label{paper6-sec1}
In this section, we start with a basic conceptual framework of the coral reef model, including corals, macroalgae, and short algal turfs. 
\begin{align}\label{paper6-mod0}
	\frac{dM}{dt}&=\underbrace{rM\left(1-\frac{M}{k}\right)}_{\textnormal{Logistic growth}}+\underbrace{aMC}_{\substack{\textnormal{Coral overgrown} \\ \textnormal{by macroalgae}}}
	-\underbrace{\frac{gM}{M+S}}_{\textnormal{Grazing rate}}+\underbrace{\gamma MS}_{\substack{\textnormal{Macroalgae} \\ \textnormal{spread rate}}},\nonumber\\
	\frac{dC}{dt}&=\underbrace{\alpha S C}_{\substack{\textnormal{Coral overgrows} \\ \textnormal{algal turfs}}}-\underbrace{dC}_{\textnormal{Coral mortality}}-\underbrace{aMC}_{\substack{\textnormal{Coral overgrown} \\ \textnormal{by macroalgae}}},\nonumber\\
	\frac{dS}{dt}&=-\underbrace{rM\left(1-\frac{M}{k}\right)}_{\textnormal{Logistic growth}}+\underbrace{\frac{gM}{M+S}}_{\textnormal{Grazing rate}}-\underbrace{\gamma MS}_{\substack{\textnormal{Macroalgae} \\ \textnormal{spread rate}}}-\underbrace{\alpha S C}_{\substack{\textnormal{Coral overgrows} \\ \textnormal{algal turfs}}}+\underbrace{dC}_{\textnormal{Coral mortality}},
\end{align}
where $M$, $C$, and $S$ denote the densities of macroalgae, corals, and algal turfs, respectively. For constructing the mathematical model \eqref{paper6-mod0}, the following assumptions are made:
\begin{itemize}
	\item[(i)] It is assumed that a specified region of the seabed is fully sheltered by macroalgae $(M)$, coral $(C)$, and algal turfs $(S)$ so that $M + C + S$ is kept constant over time, by a rescaling, lets assume that $M + C + S = 1$. Hence, algal turfs is defined by $S=1-M-C$ and accordingly $\frac{dS}{dt}$ is given by $-\frac{dM}{dt}-\frac{dC}{dt}$.
	\item[(ii)] It is assumed that corals recruit and overgrow algal turfs at a rate $\alpha$, they have a natural mortality rate of $d$, are overgrown by macroalgae at a rate $a$, and macroalgae spread vegetatively over algal turfs at a rate $\gamma$. 
	\item[(iii)] Algal turfs are believed to be the space recolonized after coral mortality. Additionally, the ratio $\frac{M}{M+S}$ represents the percentage of grazing $g$ that affects macroalgae. 
	\item[(iv)] Here, we incorporate logistic macroalgae growth into the proposed coral reef model with an intrinsic growth rate $r$ and a time-varying carrying capacity $k$.
	\item[(v)] Contrasting growth dynamics between macroalgae (logistic growth) and algal turfs (negative logistic growth) reflect competitive interactions within the ecosystem.
\end{itemize}
Biological description of the proposed model \eqref{paper6-mod0} parameters are given in Table \ref{paper6-t1}. The pictorial representation of the coral reef model is shown in Figure \ref{paper6-coralfig}.
\begin{figure}[h!]
	\begin{center}
		\includegraphics[width=9.5cm,height=5cm]{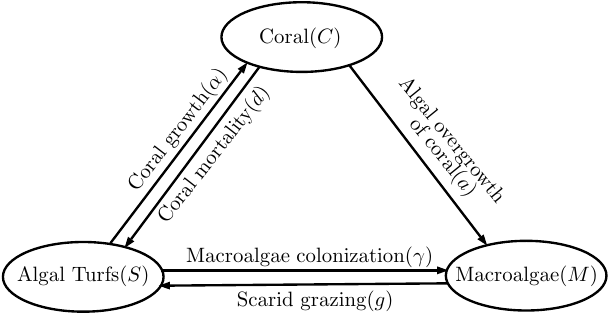}
		\caption{The schematic representation of coral reef model.}
		\label{paper6-coralfig}
	\end{center}
\end{figure}

\begin{table}[h!] 
	\begin{center}
		\caption{Model \eqref{paper6-mod0} parameter description.}
		\begin{tabular}{ |p{2.3cm}| p{7cm}| }
			\hline
			\textbf{Parameter} &  \textbf{Interpretation}\\
			\hline
			$r$ & intrinsic growth rate\\
			
			$k$ & carrying capacity of macroalgae\\
			
			$a$ & rate at which coral overgrows macroalgae\\
			
			$g$ & macroalgae grazing rate\\
			
			$\gamma$ & spread rate of macroalgae over algal turfs\\
			
			$\alpha$ & rate at which coral overgrows algal turfs\\
			
			$d$ &  natural mortality rate of coral\\
			\hline
		\end{tabular} \label{paper6-t1}
	\end{center}
\end{table}

Therefore, two equations are sufficient to describe the dynamics of this model (see \cite{paper6-Li} for more details). From those mentioned above and setting $S=1-M-C$, the coral dynamics are given by
\begin{align}\label{paper6-mod1}
	\frac{dM}{dt}&=rM\left(1-\frac{M}{k}\right)+aMC-\frac{gM}{1-C}+\gamma M(1-M-C),\nonumber\\
	\frac{dC}{dt}&=\alpha (1-M-C) C-dC-aMC.
\end{align}

To discretize the continuous-time coral reef model \eqref{paper6-mod1}, we use the forward Euler technique, for $n=0,1,2,\cdots ,$
\begin{align}\label{paper6-mod11}
	M_{n+1}&=M_n+\delta\left[rM_n\left(1-\frac{M_n}{k} \right)+aM_nC_n-\frac{gM_n}{1-C_n}+\gamma M_n (1-M_n-C_n) \right],\nonumber\\
	C_{n+1}&=C_n+\delta\Big[\alpha (1-M_n-C_n)C_n-dC_n-aM_nC_n \Big],
\end{align}
such that $M_0=M(0)\geq 0$, $C_0=C(0) \geq 0,$ and $\delta>0$ is the step size. We have discretized the model to emphasize the significance of step size in chaos control analysis for the dynamic behavior of coral reef population model. The corresponding map representation is then provided as:
\begin{align}\label{paper6-mod2}
	M &\longrightarrow M+\delta\left[rM\left(1-\frac{M}{k} \right)+aMC-\frac{gM}{1-C}+\gamma M(1-M-C) \right],\nonumber\\
	C & \longrightarrow C+\delta\Big[\alpha (1-M-C)C-dC-aMC \Big].
\end{align}

\section{Stability analysis of two-component coral reef model}\label{paper6-sec2}

This section discusses the existence of fixed points and their stability analysis for the proposed model \eqref{paper6-mod2}. The equilibrium points of the proposed coral reef model are listed below: 

\begin{itemize}
	\item[(i)] Trivial equilibrium $E_0(0,0).$
	\item[(ii)] Axial equilibrium $E_{A_1}(M_A, 0),$ where $M_A$=$\frac{(r-g+\gamma)k}{(r+k \gamma)}$.
	\item[(iii)] Axial equilibrium $E_{A_2}(0, C_A),$ where $C_A=1-\frac{d}{ \alpha}$.
	\item[(iv)] Interior equilibrium, $E^*(M^*, C^*)$ in the presence of both populations is the intersection point of two following nontrivial nullclines in the first quadrant of $\mathbb{R}^2$:
	\begin{align*}
		r\left(1-\frac{M}{k}\right)+aC-\frac{g}{1-C}+\gamma (1-M-C)&=0,\\
		\alpha (1-M-C)-d-aM&=0.
	\end{align*}
\end{itemize}
	\begin{figure}[h!]
		\begin{center}
		\includegraphics[width=10cm,height=5.5cm]{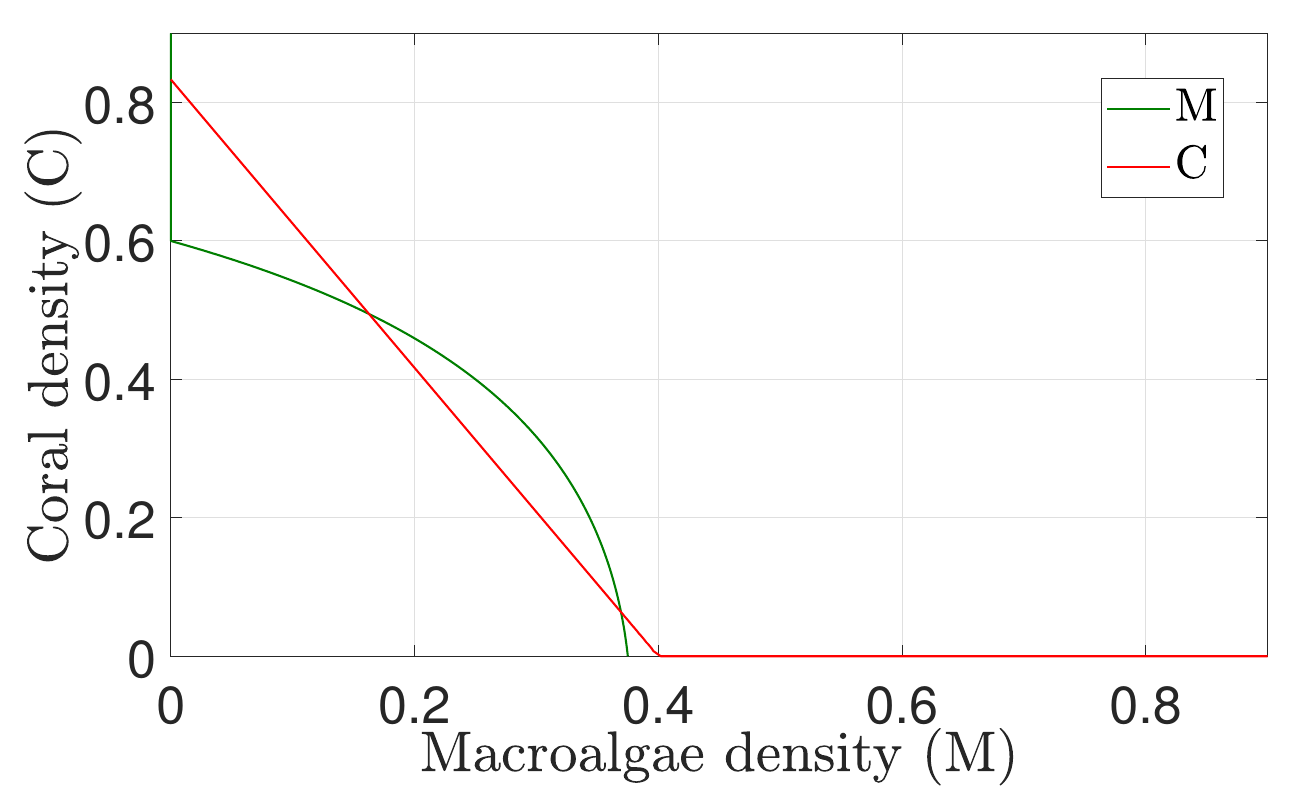}
		\caption{Nullclines indicates the occurrence of two equilibrium points of model \eqref{paper6-mod2}.}
		\label{paper6-nullcline-0.2}
	\end{center}
	\end{figure}
	\begin{figure}[h!]
		\begin{center}
		\includegraphics[width=10cm,height=5.5cm]{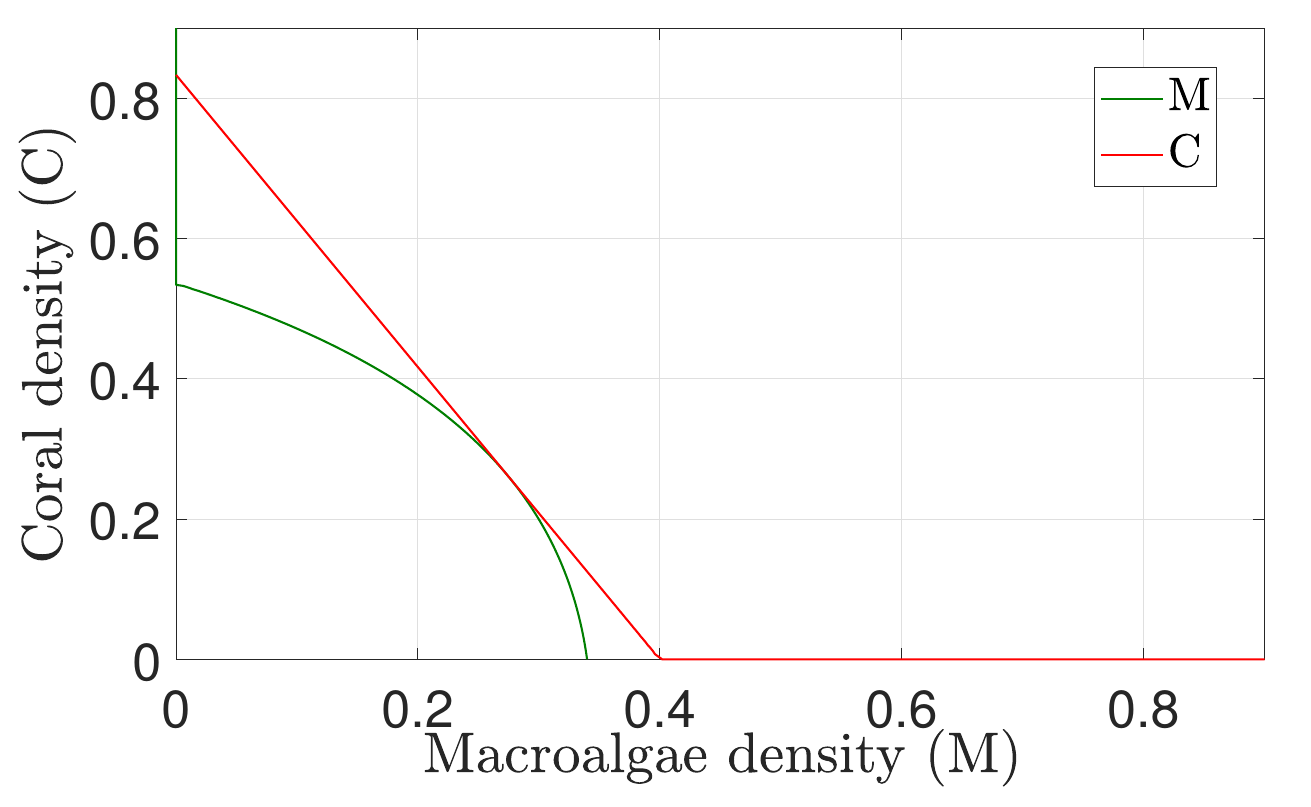}
		\caption{Nullclines indicates the occurrence of unique equilibrium point of model \eqref{paper6-mod2}.}
		\label{paper6-nullcline-0.112287}
	\end{center}
	\end{figure}
Depending on the parameters, the number of feasible interior equilibrium points for the proposed model is determined by comparing the relative positions and shapes of the nontrivial nullclines. Numerical analysis suggests that the potential number of coexistence equilibrium points ranges from zero to two, as depicted in Figures \ref{paper6-nullcline-0.2}-\ref{paper6-nullcline-0.05}, contingent upon the intrinsic growth rate $r$. Consider the fixed set of parameters $k=0.5,$ $a=0.65,$ $g=0.3,$ $\gamma=0.4,$ $d=0.1,$ and $\alpha=0.6.$ For $r= 0.2,$ the algae nullcline (green curve) and coral nullcline (red curve) cross twice, indicating the occurrence of two interior equilibrium (0.162603, 0.494576) and (0.369384, 0.063783) in the feasible region shown in Figure \ref{paper6-nullcline-0.2}. For $r=0.112287,$ the algae nullcline (green curve) and coral nullcline (red curve) touch each other and give unique equilibrium (0.27, 0.26) is shown in Figure \ref{paper6-nullcline-0.112287}. Two nullclines neither overlap nor contact each other for $r=0.05$, implying no equilibrium point, as shown in Figure \ref{paper6-nullcline-0.05}.
	\begin{figure}[h!]
		\begin{center}
		\includegraphics[width=10cm,height=5.5cm]{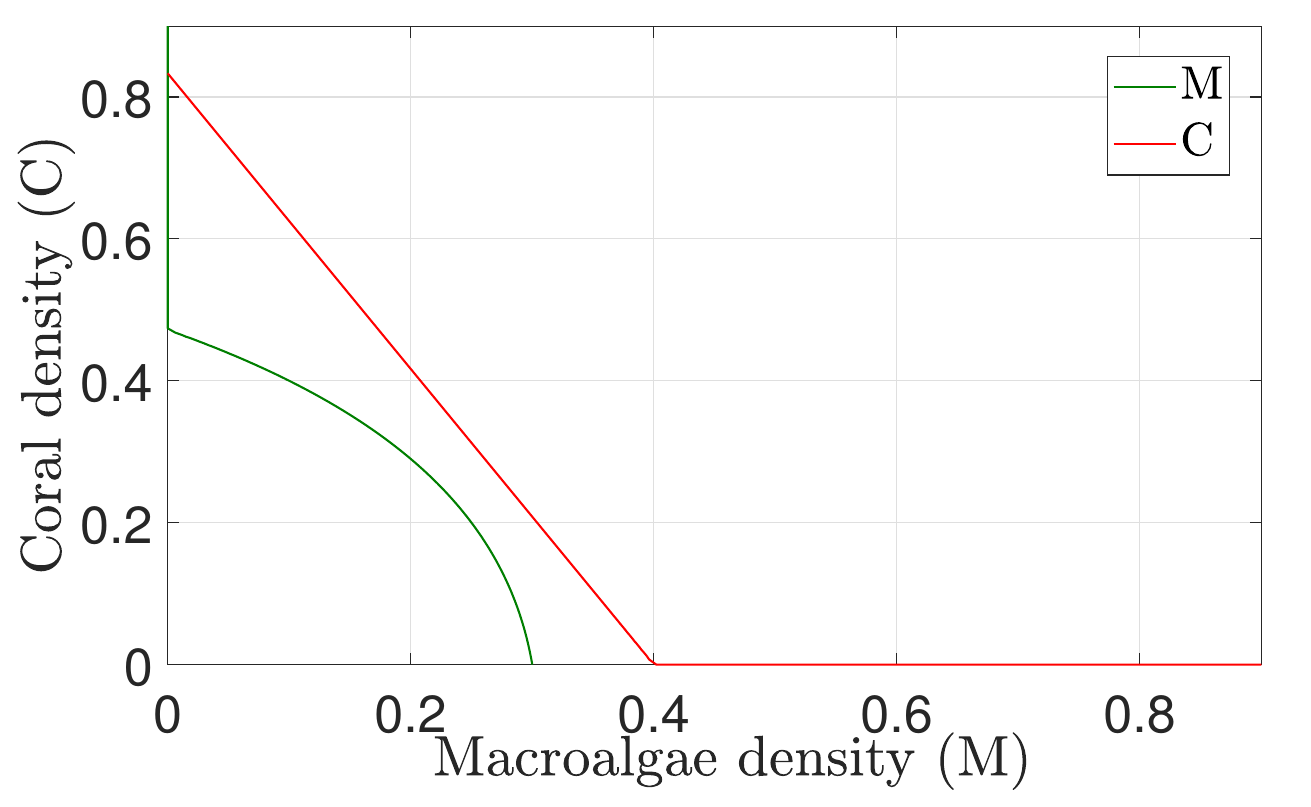}
		\caption{Nullclines indicates the occurrence of no equilibrium point of model \eqref{paper6-mod2}.}
		\label{paper6-nullcline-0.05}
	\end{center}
	\end{figure}

Considering the permanence of two species, here we concentrate only on the interior equilibrium $E^*(M^*, C^*)$. The eigenvalues of the Jacobian matrices corresponding to $E^*(M^*, C^*)$ are used to investigate the local stability of the proposed model \eqref{paper6-mod2}.  Consider the following Jacobian matrix for model \eqref{paper6-mod2} about $E^*(M^*, C^*)$:
\begin{align}\label{paper6-eq20}
J=J(M, C)=\begin{bmatrix}
a_{11} & a_{12}\\
a_{21} & a_{22}
\end{bmatrix},
\end{align}
where $a_{11}=1+\delta r-\frac{2 \delta r M}{k}+\delta a C-\frac{\delta g}{1-C}+\delta \gamma -2 \delta \gamma M-\delta \gamma C$,
$a_{12}=\delta a M-\frac{\delta g M}{(1-C)^2}-\delta \gamma M$,
$a_{21}=-\delta \alpha C-\delta a C$, 
and $a_{22}=1+\delta \alpha -\delta \alpha M-2 \delta \alpha C-\delta d-\delta a M.$

Let $F(\lambda)=\lambda^2+\mathcal{A}\lambda+\mathcal{B}$ where $\mathcal{A}=-\textrm{tr}(J)$ and $\mathcal{B}=\det (J)$ be the characteristic equation of associated Jacobian matrix $J$ evaluated at a fixed point $E^*(M^*, C^*)$. Then, the following Lemmas \ref{paper6-lem1} and \ref{paper6-lem2} may be demonstrated by the relationships between roots of $F(\lambda)=0$ and its coefficients. This will help us to examine the stability of interior equilibrium $E^*(M^*, C^*)$.

\begin{lemma}\label{paper6-lem1}
\cite{paper6-LuoAC} Suppose that $F(1)>0$, $\lambda_1$ and $\lambda_2$ are two characteristic roots of $F(\lambda)=0.$ Then
	\begin{itemize}
		\item[1.] $|\lambda_1|<1$ and $|\lambda_2|<1$ if and only if $F(-1)>0$ and $\mathcal{B}<1,$
		\item[2.] $|\lambda_1|<1$ and $|\lambda_2|>1$ (\textrm{or}~ $|\lambda_1|>1$ and $|\lambda_2|<1$) if and only if $F(-1)<0,$
		\item[3.] $|\lambda_1|>1$ and $|\lambda_2|>1$ if and only if $F(-1)>0$ and $\mathcal{B}>1,$
		\item[4.] $\lambda_1=-1$ and $|\lambda_2|\neq 1$ if and only if $F(-1)=0$ and $\mathcal{A}$ is neither 0 nor 2,
		\item[5.] $\lambda_1$ and $\lambda_2$ are complex and $|\lambda_1|=1$ and $|\lambda_2|=1$ if and only if $\mathcal{A}^2-4\mathcal{B}<0$ and $\mathcal{B}=1.$
	\end{itemize}
\end{lemma}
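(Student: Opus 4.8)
The statement to prove is Lemma~\ref{paper6-lem1}, which is a standard Jury-type stability criterion for a real quadratic $F(\lambda)=\lambda^2+\mathcal{A}\lambda+\mathcal{B}$, characterizing the location of its roots relative to the unit circle in terms of $F(1)$, $F(-1)$, and $\mathcal{B}=\lambda_1\lambda_2$. Since the paper cites \cite{paper6-LuoAC}, the result is quoted, but here is how I would prove it from scratch.

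\medskip

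\noindent\textbf{Proof proposal.}
The plan is to exploit the elementary relations $F(1)=1+\mathcal{A}+\mathcal{B}=(1-\lambda_1)(1-\lambda_2)$, $F(-1)=1-\mathcal{A}+\mathcal{B}=(1+\lambda_1)(1+\lambda_2)$, and $\mathcal{B}=\lambda_1\lambda_2$, together with a split into the two structural cases: (a) $\mathcal{A}^2-4\mathcal{B}\ge 0$, so both roots are real, and (b) $\mathcal{A}^2-4\mathcal{B}<0$, so the roots are a complex-conjugate pair with $|\lambda_1|^2=|\lambda_2|^2=\mathcal{B}$. Throughout we are given $F(1)>0$, i.e. $(1-\lambda_1)(1-\lambda_2)>0$, which rules out exactly one root equal to $1$ and, in the real case, forces both real roots to lie on the same side of $1$.

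\medskip

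\noindent First I would dispatch the complex case (b): here $|\lambda_1|=|\lambda_2|=\sqrt{\mathcal{B}}$ (so necessarily $\mathcal{B}>0$), hence both moduli are $<1$ iff $\mathcal{B}<1$, both are $>1$ iff $\mathcal{B}>1$, and both equal $1$ iff $\mathcal{B}=1$; moreover $F(-1)=|1+\lambda_1|^2>0$ automatically, which is consistent with items 1, 3, and 5 and shows item 2 and item 4 cannot occur in this case. For item 5 one checks the converse: $\mathcal{A}^2-4\mathcal{B}<0$ is precisely the condition for non-real roots and then $\mathcal{B}=1$ gives unit modulus. Then I would treat the real case (a). Write $\lambda_1,\lambda_2\in\mathbb{R}$. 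The key observations are: $F(-1)=(1+\lambda_1)(1+\lambda_2)$ has the sign of the product, so $F(-1)<0$ means exactly one root lies in $(-\infty,-1)$ and the other in $(-1,\infty)$; combined with $F(1)>0$ (both roots on the same side of $1$) a short sign-chase pins down item 2 and shows its converse. When $F(-1)>0$, both $1+\lambda_i$ have the same sign; using $F(1)>0$ as well, the pair $(\lambda_1,\lambda_2)$ lies either both in $(-1,1)$ or both outside $[-1,1]$, and the discriminator between these is $\mathcal{B}=\lambda_1\lambda_2$ versus $1$: if both are inside, $|\lambda_1\lambda_2|<1$; if both have modulus $>1$, then $|\lambda_1\lambda_2|>1$ — here one must also argue the signs work out so that $\mathcal{B}<1$ really is equivalent to ``both inside'' and $\mathcal{B}>1$ to ``both outside,'' which uses $F(1)>0$ and $F(-1)>0$ to exclude the mixed-sign configurations. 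This yields items 1 and 3. Finally item 4: $F(-1)=0$ iff $-1$ is a root; then $\mathcal{A}=-(\lambda_1+\lambda_2)=-(-1+\lambda_2)=1-\lambda_2$, so $\mathcal{A}\ne 0$ excludes $\lambda_2=1$ (which $F(1)>0$ already forbids) and $\mathcal{A}\ne 2$ excludes $\lambda_2=-1$ (a double root at $-1$), leaving $|\lambda_2|\ne 1$; the converse is the same computation read backwards.

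\medskip

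\noindent The main obstacle is not any single deep step but the bookkeeping in the real case: one must carefully combine the two sign constraints $F(1)>0$ and $\operatorname{sgn}F(-1)$ with the sign and magnitude of $\mathcal{B}=\lambda_1\lambda_2$ to separate ``both roots in the open unit disk'' from ``both roots outside the closed unit disk,'' ruling out the configurations where the roots straddle $0$ with mixed signs. A clean way to organize this is to note that, given $F(1)>0$ and $F(-1)>0$, the quadratic has no real root at $\pm 1$ and no root in $\{1\}\cup\{-1\}$, so by continuity of $F$ and the fact that $F(\lambda)\to+\infty$ as $\lambda\to\pm\infty$, the number of real roots in $(-1,1)$ is even (zero or two); then $\mathcal{B}=F(0)$-type reasoning — more precisely, comparing $|\lambda_1\lambda_2|$ to $1$ — distinguishes the two sub-possibilities, because two real numbers both of modulus $<1$ have product of modulus $<1$, while if both have modulus $\ge 1$ with at least one $>1$ the product has modulus $>1$. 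Assembling these pieces in the order: complex case, then $F(-1)<0$, then $F(-1)>0$ split by $\mathcal{B}$ vs $1$, then the boundary case $F(-1)=0$, gives all five items together with their converses.
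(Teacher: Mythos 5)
Your argument is correct, but note that the paper itself offers no proof of this lemma: it is quoted verbatim from the cited reference (Luo, \emph{Regularity and complexity in dynamical systems}) and used as a known Jury/Schur--Cohn-type criterion, so there is nothing in the text to compare against step by step. Your route is the standard one and it does work: the identities $F(1)=(1-\lambda_1)(1-\lambda_2)$, $F(-1)=(1+\lambda_1)(1+\lambda_2)$, $\mathcal{B}=\lambda_1\lambda_2$, the split into the complex case ($\mathcal{A}^2-4\mathcal{B}<0$, where $|\lambda_1|^2=|\lambda_2|^2=\mathcal{B}$ and $F(-1)>0$ automatically) and the real case, and the use of $F(1)>0$, $F(-1)>0$ to exclude the mixed configurations (one root beyond $1$ or beyond $-1$ with the other inside) before letting $\mathcal{B}\lessgtr 1$ separate ``both in $(-1,1)$'' from ``both outside $[-1,1]$'' are exactly the ingredients of the textbook proof, and your treatment of item 4 via $\mathcal{A}=1-\lambda_2$ when $\lambda_1=-1$ is complete in both directions. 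The only cosmetic remark is that the continuity/parity digression (``the number of real roots in $(-1,1)$ is even'') is redundant once you have the direct sign analysis of $(1\pm\lambda_1)(1\pm\lambda_2)$; the factorization argument alone already pins down the three admissible real configurations. So your proposal supplies a correct self-contained proof of a result the paper simply cites.
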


\begin{lemma}\label{paper6-lem2}
\cite{paper6-LuoAC} Suppose $\lambda_1$ and $\lambda_2$ be characteristics roots of $F(\lambda)=0$, then the interior equilibrium $E^*(M^*, C^*)$ of the model \eqref{paper6-mod2} is called 
	\begin{itemize}
		\item[1.] a sink if $|\lambda_1|<1$ and $|\lambda_2|<1$,
		\item[2.] a source if $|\lambda_1|>1$ and $|\lambda_2|>1$,
		\item[3.] a saddle if $|\lambda_1|>1$ and $|\lambda_2|<1$ (or $|\lambda_1|<1$ and $|\lambda_2|>1$),
		\item[4.] non-hyperbolic if either $|\lambda_1|=1$ or $|\lambda_2|=1.$
	\end{itemize}
\end{lemma}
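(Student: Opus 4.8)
The plan is to view Lemma~\ref{paper6-lem2} as the standard dictionary between the location of the eigenvalues of the linearization and the qualitative local behaviour of the planar map \eqref{paper6-mod2}, so that the proof reduces to unwinding definitions together with the classical linearization (Hartman--Grobman) theorem rather than any new computation. First I would recall that near the fixed point $E^*(M^*,C^*)$ the dynamics of \eqref{paper6-mod2} are governed, to leading order, by the linear map $\xi\mapsto J\xi$, where $J$ is the Jacobian \eqref{paper6-eq20}; iterating this linear map, a perturbation contracts geometrically along an eigendirection whose eigenvalue has modulus less than one and expands along one whose eigenvalue has modulus greater than one.

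I would then treat the four cases in turn. If $|\lambda_1|<1$ and $|\lambda_2|<1$, the spectral radius of $J$ is strictly less than one, so there is an adapted norm in which $J$ is a strict contraction and $E^*$ is locally asymptotically stable, i.e. a sink. If $|\lambda_1|>1$ and $|\lambda_2|>1$, applying the same reasoning to $J^{-1}$ (equivalently, to the local inverse of the map near $E^*$, which exists for small $\delta$) shows every nearby orbit is repelled, so $E^*$ is a source. If exactly one eigenvalue lies strictly inside and the other strictly outside the unit circle, the Hartman--Grobman theorem yields a local topological conjugacy with the linear saddle $\xi\mapsto J\xi$, producing one-dimensional local stable and unstable manifolds through $E^*$, i.e. a saddle. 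Finally, if $|\lambda_1|=1$ or $|\lambda_2|=1$, the fixed point is by definition non-hyperbolic and linearization is inconclusive; this is precisely the degenerate situation addressed later in the paper via the center manifold theorem.

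There is essentially no obstacle here: the only substantive point is that for a $2\times2$ real matrix the four listed alternatives are exhaustive once one separates the hyperbolic cases (no eigenvalue on the unit circle) from the non-hyperbolic one, which is immediate. For completeness I would cite \cite{paper6-LuoAC} together with a standard reference such as \cite{paper6-Robinson} or \cite{paper6-Guckenheimer} for the linearization theorem, and remark that Lemma~\ref{paper6-lem1} supplies the algebraic criteria, in terms of $F(1)$, $F(-1)$, $\mathcal{A}$ and $\mathcal{B}$, for deciding which of the four cases actually occurs at $E^*$.
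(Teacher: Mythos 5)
Your argument is correct, but it is worth saying plainly that the paper does not prove this statement at all: Lemma~\ref{paper6-lem2} is quoted verbatim from \cite{paper6-LuoAC} and is essentially a piece of terminology (note the wording ``is called''), namely the standard classification of a fixed point of a planar map by the moduli of the eigenvalues of its Jacobian. What you have supplied is the textbook justification that this terminology matches the actual local dynamics of \eqref{paper6-mod2}: an adapted norm making $J$ a strict contraction when both eigenvalues lie inside the unit circle (sink, local asymptotic stability), the same argument applied to the locally inverted map when both lie outside (source), Hartman--Grobman giving a topological conjugacy to the linear saddle in the mixed hyperbolic case, and the observation that an eigenvalue on the unit circle is precisely the definition of non-hyperbolicity, where linearization is inconclusive and the paper later resorts to the center manifold theorem. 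That is a genuinely different (and more informative) route than the paper's bare citation, and it is sound: the only hypotheses you implicitly use --- smoothness of the map near $E^*(M^*,C^*)$ (which holds since $1-C^*\neq 0$ at an interior equilibrium) and invertibility of $J$ in the source case (automatic because $\det J=\lambda_1\lambda_2$ has modulus larger than one there, so no smallness condition on $\delta$ is actually needed) --- are satisfied. Your closing remark correctly situates Lemma~\ref{paper6-lem1} as the algebraic device, via $F(1)$, $F(-1)$, $\mathcal{A}$ and $\mathcal{B}$, for deciding which of the four mutually exclusive and exhaustive cases occurs, which is exactly how the paper uses the pair of lemmas in Theorem~\ref{paper6-Th1}.
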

\begin{remark}
To establish the necessary and sufficient conditions for the local stability of the model \eqref{paper6-mod2} in the following Theorem \ref{paper6-Th1}, it is essential to observe the requirements given in Lemmas \ref{paper6-lem1} and \ref{paper6-lem2}. The characteristic equation of the Jacobian matrix $J(M, C)$ is given by
\begin{align*}
\lambda^2+\mathcal{A}(M, C)\lambda+\mathcal{B}(M, C)=0,
\end{align*} 
where 
\begin{align*}
-\mathcal{A}(M, C)=a_{11}+a_{22}=2+\delta\mathcal{U},
\end{align*}
where $\mathcal{U}=u_1+u_2$ and 
\begin{align*}
\mathcal{B}(M, C)=a_{11}a_{22}-a_{12}a_{21}=1+\delta\mathcal{U}+\delta^2\mathcal{V},
\end{align*}
where $\mathcal{V}=u_1u_2-\frac{a_{12}a_{21}}{\delta^2}.$
Here, $u_1=r-\frac{2rM}{k}+aC-\frac{g}{1-C}+\gamma-2\gamma M-\gamma C$ and $u_2=\alpha-\alpha M-2 \alpha C-d-aM$.
Then 
\begin{align*}
F(\lambda)=&\lambda^2-\textrm{tr}(J)\lambda+\det(J)\\
=&\lambda^2-(2+\delta\mathcal{U})\lambda+(1+\delta\mathcal{U}+\delta^2\mathcal{V}).
\end{align*}

Here, $F(-1)=4+2\delta\mathcal{U}+\delta^2\mathcal{V}$ and $F(1)=\delta^2\mathcal{V}>0$. It is clear that from the assumption of Lemma \ref{paper6-lem1}, $F(1) >0$ if $\mathcal{V}>0$. Assume $\mathcal{V}>0.$
\end{remark}

\begin{theorem}\label{paper6-Th1}
	Let $E^*(M^*, C^*)$ be the  positive interior equilibrium and Lemma \ref{paper6-lem1} and \ref{paper6-lem2} holds for the model \eqref{paper6-mod2}. Then $E^*(M^*, C^*)$ is said to be
\begin{itemize}
\item[1.] a sink if one of the following conditions holds:
		\begin{itemize}
			\item[(i)] $\mathcal{U}=-2\sqrt{\mathcal{V}}$ and $0<\delta<-\frac{\mathcal{U}}{\mathcal{V}}$,
			\item[(ii)] $\mathcal{U}<-2\sqrt{\mathcal{V}}$ and $0<\delta<\frac{-\mathcal{U}-\sqrt{\mathcal{U}^2-4\mathcal{V}}}{\mathcal{V}}$.
		\end{itemize}
\item[2.] a source if one of the following conditions holds:
	\begin{itemize}
		\item[(i)] $\mathcal{U}=-2\sqrt{\mathcal{V}}$ and $\delta>-\frac{\mathcal{U}}{\mathcal{V}}$,
		\item[(ii)] $\mathcal{U}<-2\sqrt{\mathcal{V}}$ and $\delta>\frac{-\mathcal{U}+\sqrt{\mathcal{U}^2-4\mathcal{V}}}{\mathcal{V}}$,
		\item[(iii)] $\mathcal{U} \geq 0.$
	\end{itemize}
\item[3.] a saddle if the following conditions holds:
\begin{itemize}
	\item[] $\mathcal{U}<-2\sqrt{\mathcal{V}}$ and $\frac{-\mathcal{U}-\sqrt{\mathcal{U}^2-4\mathcal{V}}}{\mathcal{V}}<\delta<\frac{-\mathcal{U}+\sqrt{\mathcal{U}^2-4\mathcal{V}}}{\mathcal{V}}$. 
\end{itemize}
\item[4.] a non-hyperbolic if one of the following conditions holds:
\begin{itemize}
	\item[(i)] $\mathcal{U}<-2\sqrt{\mathcal{V}},$ $\delta=\frac{-\mathcal{U} \pm \sqrt{\mathcal{U}^2 -4\mathcal{V}}}{\mathcal{V}}$, and $\delta \neq -\frac{2}{\mathcal{U}}, -\frac{4}{\mathcal{V}}$,
	\item[(ii)] $-2\sqrt{\mathcal{V}}<\mathcal{U}<0$ and $\delta=-\frac{\mathcal{U}}{\mathcal{V}}.$
\end{itemize}
	\end{itemize}
\end{theorem}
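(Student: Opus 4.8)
The plan is to apply Lemmas \ref{paper6-lem1} and \ref{paper6-lem2} directly, after reducing the whole classification to the sign behaviour of two explicit functions of the step size $\delta$. From the Remark preceding the statement we already have, at $E^*(M^*,C^*)$, the identities $F(1)=\delta^2\mathcal{V}$, $F(-1)=\mathcal{V}\delta^2+2\mathcal{U}\delta+4$, $\mathcal{B}=\det(J)=1+\delta\mathcal{U}+\delta^2\mathcal{V}$, and $\mathcal{A}=-\textrm{tr}(J)=-(2+\delta\mathcal{U})$. Since we assume $\mathcal{V}>0$, the hypothesis $F(1)>0$ of Lemma \ref{paper6-lem1} holds for every $\delta>0$, so only the signs of $F(-1)$ and of $\mathcal{B}-1$ matter, and the argument becomes an elementary quadratic-inequality analysis in $\delta$ with $\mathcal{U}$ and $\mathcal{V}>0$ held fixed.

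First I would record the factorization $\mathcal{B}-1=\delta(\mathcal{U}+\delta\mathcal{V})$: for $\delta>0$ this gives $\mathcal{B}<1$, $\mathcal{B}=1$, $\mathcal{B}>1$ according to $\delta<-\mathcal{U}/\mathcal{V}$, $\delta=-\mathcal{U}/\mathcal{V}$, $\delta>-\mathcal{U}/\mathcal{V}$ (the first two needing $\mathcal{U}<0$, while $\mathcal{B}>1$ is automatic once $\mathcal{U}\ge 0$). Next I would treat $F(-1)$ as an upward parabola in $\delta$ with vertex at $\delta=-\mathcal{U}/\mathcal{V}$ and discriminant $4(\mathcal{U}^2-4\mathcal{V})$, distinguishing three regimes: if $\mathcal{U}>-2\sqrt{\mathcal{V}}$ then $F(-1)>0$ for all $\delta>0$ (either its minimum over $\mathbb{R}$ is strictly positive, or its vertex and both real roots are non-positive); if $\mathcal{U}=-2\sqrt{\mathcal{V}}$ then $F(-1)=\mathcal{V}\bigl(\delta+\mathcal{U}/\mathcal{V}\bigr)^2$, vanishing only at the positive value $\delta=-\mathcal{U}/\mathcal{V}$; and if $\mathcal{U}<-2\sqrt{\mathcal{V}}$ then $F(-1)$ has the two positive roots $\delta_{\pm}=\bigl(-\mathcal{U}\pm\sqrt{\mathcal{U}^2-4\mathcal{V}}\bigr)/\mathcal{V}$ (positivity from the product $\delta_-\delta_+=4/\mathcal{V}>0$ and sum $-2\mathcal{U}/\mathcal{V}>0$), being negative on $(\delta_-,\delta_+)$ and positive outside.

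The structural remark that makes the cases close neatly is that the vertex $-\mathcal{U}/\mathcal{V}$ of the $F(-1)$-parabola is exactly the threshold at which $\mathcal{B}-1$ changes sign; in particular, when $\mathcal{U}<-2\sqrt{\mathcal{V}}$ it lies strictly between $\delta_-$ and $\delta_+$. Superimposing the two sign tables then yields, for $\mathcal{U}<-2\sqrt{\mathcal{V}}$: $0<\delta<\delta_-$ gives $F(-1)>0$ and $\mathcal{B}<1$, a sink (condition 1(ii)); $\delta_-<\delta<\delta_+$ gives $F(-1)<0$, a saddle by Lemma \ref{paper6-lem1}(2) (condition 3); $\delta>\delta_+$ gives $F(-1)>0$ and $\mathcal{B}>1$, a source (condition 2(ii)); and $\delta=\delta_\pm$ gives $F(-1)=0$, non-hyperbolic by Lemma \ref{paper6-lem1}(4) provided $\mathcal{A}\notin\{0,2\}$, which is the list of excluded $\delta$ in condition 4(i). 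For $\mathcal{U}=-2\sqrt{\mathcal{V}}$ the parabola touches zero precisely at the sign-change point of $\mathcal{B}-1$, so $0<\delta<-\mathcal{U}/\mathcal{V}$ gives a sink (1(i)) and $\delta>-\mathcal{U}/\mathcal{V}$ a source (2(i)). For $-2\sqrt{\mathcal{V}}<\mathcal{U}<0$ one has $F(-1)>0$ throughout, and at $\delta=-\mathcal{U}/\mathcal{V}$ one gets $\mathcal{B}=1$ together with $\mathcal{A}^2-4\mathcal{B}=(2+\delta\mathcal{U})^2-4=(2-\mathcal{U}^2/\mathcal{V})^2-4<0$ (since $0<\mathcal{U}^2<4\mathcal{V}$ here), so Lemma \ref{paper6-lem1}(5) gives a complex conjugate pair on the unit circle, the non-hyperbolic case 4(ii). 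Finally $\mathcal{U}\ge 0$ forces $F(-1)>0$ and $\mathcal{B}>1$ for all $\delta>0$, which is condition 2(iii).

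I expect the only genuine difficulty to be organizational: keeping the split on the sign of $\mathcal{U}^2-4\mathcal{V}$ consistent with that of $\mathcal{B}-1$, confirming positivity of $\delta_{\pm}$ exactly when $\mathcal{U}<0$, and translating the exclusions $\mathcal{A}\neq 0$, $\mathcal{A}\neq 2$ of Lemma \ref{paper6-lem1}(4) into the stated restrictions on $\delta$ in case 4(i) — and checking along the way that at $\delta=\delta_\pm$ with $\mathcal{U}<-2\sqrt{\mathcal{V}}$ these resonant degeneracies do not actually occur (the relevant equations for $\delta_\pm$ reduce to $\mathcal{V}=0$, which is excluded), so the non-hyperbolic conclusion there is genuine. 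Everything else is routine manipulation of quadratics.
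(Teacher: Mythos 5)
Your proposal is correct and takes essentially the same route the paper intends: it combines the Remark's expressions $F(1)=\delta^2\mathcal{V}$, $F(-1)=\mathcal{V}\delta^2+2\mathcal{U}\delta+4$ and $\mathcal{B}-1=\delta(\mathcal{U}+\delta\mathcal{V})$ with Lemmas \ref{paper6-lem1} and \ref{paper6-lem2} and an elementary sign analysis of the two quadratics in $\delta$, which is precisely the argument the paper leaves implicit since Theorem \ref{paper6-Th1} is stated without a written-out proof. The only point worth flagging is that the exclusion $\delta\neq-\frac{4}{\mathcal{V}}$ in case 4(i) is vacuous for $\delta,\mathcal{V}>0$ and should read $\delta\neq-\frac{4}{\mathcal{U}}$ to correspond to $\mathcal{A}\neq 2$; your additional check that these resonances cannot occur at $\delta=\delta_{\pm}$ when $\mathcal{V}>0$ shows the conclusion is unaffected either way.
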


\section{Bifurcation analysis}\label{paper6-sec3}
This section proposed to derive codimension-one Neimark-Sacker bifurcation around the interior equilibrium $E^*(M^*, C^*)$ for the model \eqref{paper6-mod2} where $\delta$ act as a bifurcation parameter.

\subsection{Flip bifurcation analysis around $E^*(M^*, C^*)$}

Let \begin{align*}
	\mathcal{F}_1=\left\{ \Big(r, k, a, g, \gamma, \alpha, d, S, \delta\Big): \delta=\frac{-\mathcal{U}-\sqrt{\mathcal{U}^2-4\mathcal{V}}}{\mathcal{V}}, \mathcal{U}<-2\sqrt{\mathcal{V}}, r, k, a, g, \gamma, \alpha, d, S, \delta>0 \right\},
\end{align*}
or 
\begin{align*}
	\mathcal{F}_2=\left\{ \Big(r, k, a, g, \gamma, \alpha, d, S, \delta\Big): \delta=\frac{-\mathcal{U}+\sqrt{\mathcal{U}^2-4\mathcal{V}}}{\mathcal{V}}, \mathcal{U}<-2\sqrt{\mathcal{V}}, r, k, a, g, \gamma, \alpha, d, S, \delta>0 \right\}.
\end{align*}

Now we analyze, the flip bifurcation of $E^*(M^*, C^*)$ if the parameters fluctuate in a restricted area around $\mathcal{F}_1$ (or $\mathcal{F}_2$).

We begin with a discussion about the flip bifurcation of model \eqref{paper6-mod2} at $E^*(M^*, C^*)$ when parameters fluctuate in a restricted area around $\mathcal{F}_1$. The other instance $\mathcal{F}_2$ can be justified by the same reasoning. Consider the model \eqref{paper6-mod2} with $(r, k, a, g, \gamma, \alpha, d, S, \delta_1) \in \mathcal{F}_1$, which is given by
\begin{align}\label{paper6-eq1}
	M &\longrightarrow M+\delta_1\left[rM\left(1-\frac{M}{k}\right)+aMC-\frac{gM}{M+S}+\gamma MS \right],\nonumber\\
	C &\longrightarrow C+\delta_1 \Big[\alpha SC-dC-aMC\Big].
\end{align}

The interior equilibrium point $E^*(M^*, C^*)$ of map \eqref{paper6-eq1}, whose eigenvalues are $\lambda_1=-1$, and $\lambda_2=3+\mathcal{U}\delta_1$ with $|\lambda_2| \neq 1$ by Theorem \ref{paper6-Th1}. 

Since $(r, k, a, g, \gamma, \alpha, d, S, \delta_1) \in \mathcal{F}_1$ and $\delta_1=\frac{-\mathcal{U}-\sqrt{\mathcal{U}^2-4\mathcal{V}}}{\mathcal{V}}$, we examine a perturbation of \eqref{paper6-eq1} using $\delta_1^*$ as the bifurcation parameter as follows:
\begin{align}\label{paper6-eq2}
	M &\longrightarrow M+(\delta_1+\delta_1^*)\left[rM\left(1-\frac{M}{k}\right)+aMC-\frac{gM}{M+S}+\gamma MS \right],\nonumber\\
	C &\longrightarrow C+(\delta_1+\delta_1^*) \Big[\alpha SC-dC-aMC\Big],
\end{align}
where $|\delta_1^*|\ll 1,$ is a small perturbation parameter around $\delta_1$.
\begin{theorem}\label{paper6-Th2}
	The interior equilibrium point $E^*(M^*, C^*)$ of model \eqref{paper6-mod2} experience flip bifurcation when $\delta$ changes in a small neighborhood of $\delta_1$ if $\Omega_1 \neq 0$ and $\Omega_2 \neq 0$ which is defined in Appendix A. Furthermore, period-2 orbits bifurcates from $E^*(M^*, C^*)$ is stable(unstable) if $\Omega_2>0(<0)$.
\end{theorem}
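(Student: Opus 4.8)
The plan is to carry out the classical center-manifold reduction for a period-doubling bifurcation of a planar map, following Guckenheimer--Holmes and the references already cited. First I would translate the interior equilibrium to the origin by setting $u = M - M^*$, $v = C - C^*$, and simultaneously promote the perturbation to a state variable through the trivial equation $\delta_1^* \mapsto \delta_1^*$, so that \eqref{paper6-eq2} becomes a three-dimensional map. Taylor-expanding its right-hand side about $(u,v,\delta_1^*)=(0,0,0)$ up to third order in $(u,v)$ — the only mildly delicate term being the rational grazing term $gM/(M+S)$, which must be developed as a geometric-type series — puts the system in the form $(u,v)^{\!\top}\mapsto J\,(u,v)^{\!\top} + (f(u,v,\delta_1^*),\,g(u,v,\delta_1^*))^{\!\top}$, where $J = J(M^*,C^*)$ is evaluated at $\delta=\delta_1$ and $f,g$ collect the quadratic and cubic terms together with the $\delta_1^*$-dependence.

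Next I would diagonalize the linear part. Since $(r,k,a,g,\gamma,\alpha,d,S,\delta_1)\in\mathcal{F}_1$, Theorem \ref{paper6-Th1} gives eigenvalues $\lambda_1=-1$ and $\lambda_2=3+\mathcal{U}\delta_1$ with $|\lambda_2|\neq 1$; choosing eigenvectors of $J$ and using them as a new basis $(u,v)^{\!\top}=T\,(x,y)^{\!\top}$ brings the map to $x\mapsto -x+\tilde f(x,y,\delta_1^*)$, $y\mapsto \lambda_2 y+\tilde g(x,y,\delta_1^*)$, $\delta_1^*\mapsto\delta_1^*$, with $\tilde f,\tilde g$ starting at second order. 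I would then invoke the center manifold theorem: there is a locally invariant manifold $y=h(x,\delta_1^*)=h_1 x^2+h_2 x\delta_1^*+h_3\delta_1^{*2}+O(3)$, whose coefficients are obtained by substituting into the invariance equation $h(-x+\tilde f,\delta_1^*)=\lambda_2 h(x,\delta_1^*)+\tilde g$ and matching powers (solvable because $|\lambda_2|\neq 1$). Restricting the $x$-dynamics to this manifold yields a one-dimensional map $x\mapsto F(x,\delta_1^*)=-x+c_1 x^2+c_2 x\delta_1^*+c_3 x^3+\cdots$.

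The bifurcation is then classified by the two standard quantities at $(x,\delta_1^*)=(0,0)$, namely the nondegeneracy/transversality coefficient
\begin{align*}
\Omega_1=\left.\left(\frac{\partial^2 F}{\partial x\,\partial\delta_1^*}+\frac12\frac{\partial F}{\partial\delta_1^*}\frac{\partial^2 F}{\partial x^2}\right)\right|_{(0,0)},
\end{align*}
measuring the speed at which $\lambda_1$ crosses $-1$ as $\delta$ varies, and the resonance coefficient
\begin{align*}
\Omega_2=\left.\left(\frac16\frac{\partial^3 F}{\partial x^3}+\left(\frac12\frac{\partial^2 F}{\partial x^2}\right)^{\!2}\right)\right|_{(0,0)},
\end{align*}
whose sign fixes the direction and stability of the bifurcating period-2 orbit; these are precisely the $\Omega_1,\Omega_2$ of Appendix A. If $\Omega_1\neq 0$ and $\Omega_2\neq 0$, $F$ (and hence, transferring back through $T$ and the center manifold, the map \eqref{paper6-mod2} at $E^*$) undergoes a flip bifurcation at $\delta=\delta_1$, with the period-2 orbit stable when $\Omega_2>0$ and unstable when $\Omega_2<0$.

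I expect the main obstacle to be purely computational: propagating the Taylor coefficients of the non-polynomial map \eqref{paper6-eq2} accurately through the eigenbasis change $T$ and the center-manifold substitution so that $c_1,c_2,c_3$, and therefore $\Omega_1$ and $\Omega_2$, are expressed correctly in the model parameters — the cross terms that feed the cubic coefficient via $h_1$ are where errors most easily arise. By contrast, checking $\Omega_1\neq 0$ is comparatively routine, since it amounts to verifying that the eigenvalue leaves $-1$ with nonzero velocity, consistent with $\delta_1$ lying on the boundary set $\mathcal{F}_1$; the genuinely model-specific content of the theorem is the explicit non-vanishing of $\Omega_2$, which is simply assumed in the statement.
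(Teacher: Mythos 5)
Your proposal follows essentially the same route as the paper's Appendix A: translate $E^*$ to the origin, Taylor-expand the perturbed map, pass to the eigenbasis of $J$ (eigenvalues $-1$ and $\lambda_2=3+\mathcal{U}\delta_1$), compute the center manifold $\tilde{C}=\mathfrak{F}(\tilde{M},\delta_1^*)$ from the invariance equation, restrict to a one-dimensional map, and evaluate the two flip-bifurcation discriminatory quantities, which coincide with the paper's $\Omega_1=2b_2$ and $\Omega_2=2b_1^2+2b_5$ up to harmless positive normalization factors (and, since $\partial G/\partial\delta_1^*$ vanishes at the origin, the slight difference in how the transversality coefficient is written is immaterial). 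This is correct and matches the paper's argument.
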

\begin{proof}
	The Proof of Theorem \ref{paper6-Th2} is given in Appendix A. 
\end{proof}
\begin{remark}
	In discrete-time models, taking integral step size $\delta=\delta_2=\frac{-\mathcal{U}-\sqrt{\mathcal{U}^2-4\mathcal{V}}}{\mathcal{V}}$ (which depends on model's parameter, see the statement of Theorem \ref{paper6-Th1}) as a bifurcation point can affect the stability and behavior of the model. When the integral step size is small, the model exhibits stable behavior. However, the model may exhibit complex behaviors such as chaos as the integral step size increases. The value of $\delta$ can significantly impact the model's behavior, especially near bifurcation points. Changing $\delta$ can alter the relative strengths of different terms in the model equations, potentially triggering a qualitative shift in the system's dynamics (bifurcation). Therefore, the integral step size is an important parameter to consider when analyzing the behavior of the discrete-time model. For this reason, the authors\cite{paper6-Salman,paper6-He,paper6-Elabbasy,paper6-Liu,paper6-Abdelaziz} analyzed the behavior of the dynamical model by considering integral step size, which depends on model parameters as a bifurcation parameter. 
\end{remark}
\subsection{Neimark-Sacker bifurcation analysis around $E^*(M^*, C^*)$}
Consider,
\begin{align*}
	\mathcal{N}=\Bigg\{\Big(r, k, a, g, \gamma, \alpha, d, \delta \Big): \delta=-\frac{\mathcal{U}}{\mathcal{V}}, -2\sqrt{\mathcal{V}}<\mathcal{U}<0,r, k, a, g, \gamma, \alpha, d, \delta>0 \Bigg\}.
\end{align*}

When parameters fluctuate in a small neighborhood of $\mathcal{N}$, Neimark-Sacker bifurcation may occur at $E^*(M^*, C^*)$.

The Neimark-Sacker bifurcation of $E^*(M^*, C^*)$ is discussed if the parameters $(r, k, a, g, \gamma, \alpha, d, \delta)$ vary within a small neighborhood of $\mathcal{N}$. We take parameters $(r, k, a, g, \gamma, \alpha, d, \delta_2)$ arbitrarily from $\mathcal{N}$ then the model is given by
\begin{align}\label{paper6-eq6}
	x &\longrightarrow x+\delta_2\left[rM\left(1-\frac{M}{k}\right)+aMC-\frac{gM}{1-C}+\gamma M(1-M-C) \right],\nonumber\\
	y &\longrightarrow y+\delta_2\Big[r(1-M-C)C-dC-aMC \Big].
\end{align}

Given that $(r, k, a, g, \gamma, \alpha, d, \delta_2) \in \mathcal{N}, \delta_2=-\frac{\mathcal{U}}{\mathcal{V}}$. Taking $\delta_2^*$ as the bifurcation parameter, we take the following perturbation of \eqref{paper6-eq6}:
\begin{align}\label{paper6-eq2}
x &\longrightarrow x+(\delta_2+\delta_2^*)\left[ rM\left(1-\frac{M}{k}\right)+aMC-\frac{gM}{1-C}+\gamma M(1-M-C) \right],\nonumber\\
y &\longrightarrow y+(\delta_2+\delta_2^*)\Big[ r(1-M-C)C-dC-aMC \Big],
\end{align}
where $|\delta_2^*|\ll 1,$ is a small perturbation parameter. 

\begin{theorem}\label{paper6-Th3}
	The interior equilibrium $E^*(M^*, C^*)$ of the model \eqref{paper6-mod2} experiences Neimark-Sacker bifurcation when $\delta$ changes in a small neighborhood of $\delta_2$ if $\Psi \neq 0$ which is defined in Appendix B. Furthermore, an attracting(repelling) invariant closed curve bifurcates from $E^*(M^*, C^*)$ for $\delta>\delta_2(<\delta_2)$ if $\Psi<0(>0)$.
\end{theorem}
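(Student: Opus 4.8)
The plan is to reduce the perturbed map \eqref{paper6-eq2} to the standard form to which the Neimark--Sacker bifurcation theorem (see \cite{paper6-Guckenheimer,paper6-Kuznetsov,paper6-Robinson}) applies. First I would translate the interior equilibrium to the origin via $u=M-M^*$, $v=C-C^*$ and write the perturbed map, with $\delta_2^*$ held fixed, as $(u,v)\longmapsto\big(\Phi_1(u,v,\delta_2^*),\Phi_2(u,v,\delta_2^*)\big)$. Expanding $\Phi_1,\Phi_2$ in a Taylor series about $(u,v)=(0,0)$ up to third order produces the quadratic coefficients (of $u^2,uv,v^2$) and cubic coefficients (of $u^3,u^2v,uv^2,v^3$) that are collected in Appendix B; the only mild nuisance here is differentiating the grazing term $gM/(1-C)$, whose $C$-derivatives carry factors $(1-C^*)^{-k}$.

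Next I would treat the linear part. At $\delta_2^*=0$ the characteristic polynomial is $F(\lambda)=\lambda^2+\mathcal{A}\lambda+\mathcal{B}$ with $\mathcal{B}=\det J=1+\delta_2\mathcal{U}+\delta_2^2\mathcal{V}$; since $\delta_2=-\mathcal{U}/\mathcal{V}$ one obtains $\mathcal{B}=1$, and $-2\sqrt{\mathcal{V}}<\mathcal{U}<0$ forces $\mathcal{A}^2-4\mathcal{B}<0$, so the eigenvalues form a complex-conjugate pair $\lambda,\bar\lambda$ on the unit circle, $\lambda=\tfrac12\big(-\mathcal{A}+i\sqrt{4\mathcal{B}-\mathcal{A}^2}\big)$ with $|\lambda|=1$. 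For the transversality condition I would differentiate $|\lambda(\delta)|^2=\mathcal{B}(\delta)=1+\delta\mathcal{U}+\delta^2\mathcal{V}$ to get $\frac{d|\lambda|}{d\delta}\big|_{\delta=\delta_2}=\tfrac12(\mathcal{U}+2\delta_2\mathcal{V})=-\tfrac12\mathcal{U}>0\ne 0$, so the eigenvalues cross the unit circle with nonzero speed. The non-resonance conditions $\lambda^{k}\ne 1$ for $k=1,2,3,4$ reduce to $\textrm{tr}(J)\big|_{\delta_2}=2+\delta_2\mathcal{U}\notin\{-2,-1,0,2\}$; the values $\pm 2$ are excluded automatically by the strict inequalities on $\mathcal{U}$, and $-1,0$ are generic (they fail only on a measure-zero parameter set).

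I would then introduce the real invertible change of coordinates built from the eigenvector of $J$ at $\lambda$ that puts the linear part into the rotation form $\big(\begin{smallmatrix}\alpha & -\beta\\ \beta & \alpha\end{smallmatrix}\big)$ with $\alpha=\textrm{Re}\,\lambda$, $\beta=\textrm{Im}\,\lambda$, expand the transformed nonlinearity in the new variables $(X,Y)$, and evaluate the first Lyapunov (discriminatory) quantity
\[
\Psi=-\textrm{Re}\!\left(\frac{(1-2\lambda)\bar\lambda^{2}}{1-\lambda}\,\xi_{20}\xi_{11}\right)-\tfrac12|\xi_{11}|^{2}-|\xi_{02}|^{2}+\textrm{Re}\big(\bar\lambda\,\xi_{21}\big),
\]
where $\xi_{20},\xi_{11},\xi_{02},\xi_{21}$ are the usual complex combinations of the second- and third-order coefficients of the transformed map. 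With $\Psi\ne 0$ the Neimark--Sacker theorem yields a unique invariant closed curve bifurcating from $E^*(M^*,C^*)$, attracting for $\delta>\delta_2$ when $\Psi<0$ and repelling for $\delta<\delta_2$ when $\Psi>0$, which is exactly the assertion. The main obstacle is purely computational: carrying the quadratic and cubic coefficients through the non-orthogonal eigenbasis transformation and assembling $\Psi$ without sign errors --- this bookkeeping is best carried out symbolically, and I would relegate it to Appendix B.
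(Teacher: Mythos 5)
Your proposal follows essentially the same route as the paper's Appendix B: shift $E^*(M^*,C^*)$ to the origin, verify the unit-modulus complex-conjugate pair and the transversality condition $\left.\frac{d|\lambda|}{d\delta_2^*}\right|_{\delta_2^*=0}=-\frac{\mathcal{U}}{2}>0$, exclude the resonances $\lambda^k\neq 1$ for $k=1,2,3,4$, pass to the rotation normal form via an eigenvector-based invertible transformation, and assemble the first Lyapunov (discriminatory) quantity $\Psi$ from the second- and third-order coefficients before invoking the Neimark--Sacker theorem. The only noteworthy deviation is that the paper imposes the low-order non-resonance conditions explicitly as $\mathcal{U}^2\neq 2\mathcal{V},\,3\mathcal{V}$ rather than dismissing them as holding generically, and you should likewise record them as hypotheses since the conclusion fails at these strong resonances.
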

\begin{proof}
	The Proof of Theorem \ref{paper6-Th3} is given in Appendix B. 
\end{proof}
\section{Chaos control}\label{paper6-sec4}
\subsection{OGY chaos control method}
Controlling chaos and bifurcation are believed to be crucial characteristics of interacting populations. Discrete-time models often exhibit more complex behavior than continuous ones. Chaos management techniques are required to protect the population from unpredictable events. In this part, we apply the OGY approach \cite{paper6-Ott} to the model for guiding an unstable trajectory toward a stable one. Consider model \eqref{paper6-mod11} by taking the integral step size $\delta$ as a control parameter and applying the OGY approach to find the stable region around the neighborhood of $\delta$
\begin{align}\label{paper6-eq9}
	M_{n+1}&=M_n+\delta\left[r M_n\left(1-\frac{M_n}{k}\right)+aM_n C_n-\frac{g M_n}{1-C_n}+\gamma M_n (1-M_n-C_n)  \right]=f_{11}(M_n, C_n, \delta),\nonumber\\
	C_{n+1}&=C_n+\delta\Big[r (1-M_n-C_n)C_n-dC_n-aM_nC_n \Big]=f_{12}(M_n, C_n, \delta).
\end{align}

Here, $\delta$ acts as a control parameter;  very small perturbations in $\delta$ can accomplish the desired chaos control. To do this, we constrain $\delta$ to lie in a small interval $\delta \in (\delta_0-\epsilon, \delta_0+\epsilon )$, $\epsilon>0$ where $\delta_0$ implies that the nominal value belongs to the chaotic region.

We use the stabilizing feedback control approach to direct the trajectory toward the intended orbit. The following linear map can be used to approximate model \eqref{paper6-eq9} in the neighborhood of the unstable equilibrium $E^*(M^*, C^*)$ of the model \eqref{paper6-mod2} in the chaotic zone created by the formation of Neimark-Sacker bifurcation.
\begin{align}\label{paper6-eq10}
	\begin{bmatrix}
	M_{n+1}-M^*\\
	C_{n+1}-C^*
	\end{bmatrix}\approx J(M^*, C^*, \delta_0) \begin{bmatrix}
	M_n-M^*\\
	C_n-C^*
	\end{bmatrix}+B[\delta-\delta_0],
\end{align}
where
\begin{align*}
	J(M^*, C^*, \delta_0)&=\begin{bmatrix}
	\frac{\partial f_{11}}{\partial M} & \frac{\partial f_{11}}{\partial C}\\
	\frac{\partial f_{12}}{\partial M} & \frac{\partial f_{12}}{\partial C}
	\end{bmatrix}=\begin{bmatrix}
	\hat{a} & \hat{b}\\
	\hat{c} & \hat{d}
	\end{bmatrix}\\
	&=\begin{bmatrix}
	1\!+\!\delta r\!-\!\frac{2 \delta r M}{k}\!+\!\delta a C\!-\!\frac{\delta g}{1-C}\!+\!\delta \gamma\!-\!2\delta\gamma M \!-\!\delta\gamma C  & \delta a M\!-\!\frac{\delta g M}{(1-C)^2}\!-\!\delta \gamma M\\
	-\delta \alpha C\!-\!\delta a C & 1\!+\!\delta \alpha \!-\!\delta \alpha M\!-\!2 \delta \alpha C\!-\!\delta d\!-\!\delta a M
	\end{bmatrix}
\end{align*}
and
\begin{align*}
	B=\begin{bmatrix}
	\frac{\partial f_{11}}{\partial \delta}\\
	\frac{\partial f_{12}}{\partial \delta}
	\end{bmatrix}=\begin{bmatrix}
	rM\left(1-\frac{M}{k}\right)+aMC-\frac{gM}{1-C}+\gamma M(1-M-C) \\
	r(1-M-C)C-dC-aMC
	\end{bmatrix}=\begin{bmatrix}
	\tilde{a}\\
	\tilde{b}
	\end{bmatrix}.
\end{align*}

Additionally, model \eqref{paper6-eq9} is controllable if $C$ is a rank 2 matrix,
\begin{align*}
	C=[B:JB]=\begin{bmatrix}
	\tilde{a} & \hat{a}\tilde{a}+\hat{b} \tilde{b}\\
	\tilde{b} & \hat{c}\tilde{a}+\hat{d}\tilde{b}
	\end{bmatrix}.
\end{align*}

Take
\begin{align*}
	[\delta-\delta_0]=-H\begin{bmatrix}
	M_n-M^*\\
	C_n-C^*
	\end{bmatrix},
\end{align*}
where $H=[\rho_1~ \rho_2]$ is a gain matrix. Then the model \eqref{paper6-eq10} is given by
\begin{align*}
\begin{bmatrix}
M_{n+1}-M^*\\
C_{n+1}-C^*
\end{bmatrix}\approx [J-BH]\begin{bmatrix}
M_n-M^*\\
C_n-C^*
\end{bmatrix}.	
\end{align*}

Furthermore, $E^*(M^*, C^*)$ is locally asymptotically stable if and only if all its eigenvalues lie in the unit disk. Consider $J-BH$
\begin{align*}
	J-BH=\begin{bmatrix}
	\hat{a} & \hat{b}\\
	\hat{c} & \hat{d}
	\end{bmatrix}-\begin{bmatrix}
	\tilde{a}\rho_1 & \tilde{a}\rho_2\\
	\tilde{b}\rho_1 & \tilde{b}\rho_2
	\end{bmatrix}=\begin{bmatrix}
	\hat{a}-\tilde{a}\rho_1 & \hat{b}-\tilde{a}\rho_2\\
	\hat{c}-\tilde{b}\rho_1 & \hat{d}-\tilde{b}\rho_2
	\end{bmatrix}.
\end{align*}

The characteristic equation $J-BH$ is given by 
\begin{align*}
	P(\lambda)=\lambda^2-(\lambda_1+\lambda_2)\lambda+(\lambda_1 \lambda_2),
\end{align*}
where
\begin{align}
	\lambda_1+\lambda_2&=\hat{a}+\hat{d}-\tilde{a}\rho_1-\tilde{b}\rho_2,\label{paper6-eq11}\\
	\lambda_1\lambda_2&=(\hat{a}-\tilde{a}\rho_1)(\hat{d}-\tilde{b}\rho_2)-(\hat{c}-\tilde{b}\rho_1)(\hat{b}-\tilde{a}\rho_2)\label{paper6-eq12}.
\end{align}

Next, to determine the lines of marginal stability for the associated controlled model, we consider $\lambda_1=\pm 1$ and $\lambda_1\lambda_2=1$. Additionally, these limitations guarantee that the open unit disk contains $\lambda_1$ and $\lambda_2$. Assume $\lambda_1\lambda_2=1$, then \eqref{paper6-eq12} became
\begin{align*}
	\mathcal{L}_1: (\hat{a}-\tilde{a}\rho_1)(\hat{d}-\tilde{b}\rho_2)-(\hat{c}-\tilde{b}\rho_1)(\hat{b}-\tilde{a}\rho_2)-1=0.
\end{align*}

Next, assume that $\lambda_1=1$, then from \eqref{paper6-eq11} and \eqref{paper6-eq12} we have
\begin{align*}
	\mathcal{L}_2: \hat{a}+\hat{d}-\tilde{a}\rho_1-\tilde{b}\rho_2-1=\hat{a}\hat{d}-\hat{a}\tilde{b}\rho_2-\tilde{a}\hat{d}\rho_1+\tilde{a}\tilde{b}\rho_1\rho_2-\hat{c}\hat{b}+\tilde{a}\hat{c}\rho_2+\tilde{d}\hat{b}\rho_1-\tilde{a}\tilde{b}\rho_1\rho_2.
\end{align*}

Finally assume $\lambda_2=-1$, then from \eqref{paper6-eq11} and \eqref{paper6-eq12} we have
\begin{align*}
\mathcal{L}_3: \hat{a}+\hat{d}-\tilde{a}\rho_1-\tilde{b}\rho_2+1=-[\hat{a}\hat{d}-\hat{a}\tilde{b}\rho_2-\tilde{a}\hat{d}\rho_1+\tilde{a}\tilde{b}\rho_1\rho_2-\hat{c}\hat{b}+\tilde{a}\hat{c}\rho_2+\tilde{d}\hat{b}\rho_1-\tilde{a}\tilde{b}\rho_1\rho_2].
\end{align*}

Thus, the bounded region obtained by the intersection of lines $\mathcal{L}_1$, $\mathcal{L}_2$, and $\mathcal{L}_3$ gives the stable eigenvalues for the model \eqref{paper6-mod2}.  

\subsection{Feedback control method}
We use the hybrid control feedback methodology \cite{paper6-Luo}, which was mainly developed for controlling the period-doubling bifurcation; a similar technique is used in \cite{paper6-Yuan} to manage the chaos caused by the appearance of Neimark-Sacker bifurcation. Assuming model \eqref{paper6-mod2} experiences Neimark-Sacker bifurcation at equilibrium $E^*(M^*, C^*)$, the associated controlled model is given by
\begin{align}\label{paper6-eq13}
	M_{n+1}=\zeta f_{11}(M_n, C_n, \delta)+(1-\zeta)M_n,\nonumber\\
	C_{n+1}=\zeta f_{12}(M_n, C_n, \delta)+(1-\zeta)C_n,
\end{align}
where $0<\zeta<1.$ The controlled method in \eqref{paper6-eq13} combines feedback control $\zeta$ and parameter perturbation $\delta$. Additionally, the Neimark-Sacker bifurcation of the equilibrium $E^*(M^*, C^*)$ of the controlled model \eqref{paper6-eq13} can be delayed, or perhaps completely removed by making an appropriate choice of the controlled parameter. From the Jacobian matrix \eqref{paper6-eq20}, we write the corresponding Jacobian  $J^*$ for the controlled model \eqref{paper6-eq13}
\begin{align*}
	J^*=\begin{bmatrix}
	\zeta a_{11}+(1-\zeta) & \zeta a_{12}\\
	\zeta a_{21} & \zeta a_{22}+(1-\zeta)
	\end{bmatrix}.
\end{align*}

Then the characteristic equation is given by
\begin{align*}
	\lambda^2-\textrm{tr}(J^*)\lambda+\det(J^*)=0,
\end{align*}
where \begin{align*}
	\textrm{tr}(J^*)&=\zeta a_{11}+\zeta a_{22}+2(1-\zeta),\\
	\det(J^*)&=\Big(\zeta a_{11}+(1-\zeta)\Big)\Big(\zeta a_{22}+(1-\zeta)\Big)-\zeta^2 a_{12}a_{21}
\end{align*}
where $a_{11},$ $a_{12},$ $a_{21},$ and $a_{22}$ are given previously. Using \cite{paper6-Yuan}, the following result identifies the requisites for the positive equilibrium $E^*(M^*, C^*)$ of the controlled model \eqref{paper6-eq13} to be locally asymptotically stable.
\begin{theorem}
If the following condition holds,
\begin{align*}
|\textrm{tr} (J^*)|-1<\textrm{det}(J^*)<1,
\end{align*}
then the positive equilibrium $E^*(M^*, C^*)$ of the controlled model \eqref{paper6-eq13} is locally asymptotically stable.		
\end{theorem}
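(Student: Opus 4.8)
The plan is to reduce the assertion to the classical Schur--Cohn (Jury) criterion for a quadratic characteristic polynomial, which is precisely what Lemma~\ref{paper6-lem1} encodes. First I would recall that, since the controlled map \eqref{paper6-eq13} is smooth, $E^*(M^*,C^*)$ is locally asymptotically stable exactly when every eigenvalue of $J^*$ lies strictly inside the open unit disk. Writing the characteristic polynomial as $F^*(\lambda)=\lambda^2-\textrm{tr}(J^*)\lambda+\det(J^*)$, I would set $\mathcal{A}^*=-\textrm{tr}(J^*)$ and $\mathcal{B}^*=\det(J^*)$, so that Lemma~\ref{paper6-lem1} applies verbatim to $F^*$.

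Next I would show that the hypothesis $|\textrm{tr}(J^*)|-1<\det(J^*)<1$ is equivalent to the three inequalities $F^*(1)>0$, $F^*(-1)>0$ and $\mathcal{B}^*<1$. Indeed, $|\textrm{tr}(J^*)|<1+\det(J^*)$ splits into $\textrm{tr}(J^*)<1+\det(J^*)$ and $-\textrm{tr}(J^*)<1+\det(J^*)$, that is, $F^*(1)=1-\textrm{tr}(J^*)+\det(J^*)>0$ and $F^*(-1)=1+\textrm{tr}(J^*)+\det(J^*)>0$; the right half of the hypothesis gives $\mathcal{B}^*=\det(J^*)<1$ directly; and, since $|\textrm{tr}(J^*)|\ge 0$, the left half also forces $\det(J^*)>-1$, so in fact $|\mathcal{B}^*|<1$.

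With $F^*(1)>0$ available, the standing hypothesis of Lemma~\ref{paper6-lem1} is met, and part~(1) of that lemma together with $F^*(-1)>0$ and $\mathcal{B}^*<1$ yields $|\lambda_1|<1$ and $|\lambda_2|<1$. By Lemma~\ref{paper6-lem2}(1) the equilibrium $E^*(M^*,C^*)$ is then a sink, hence locally asymptotically stable, which is the claim. To keep the argument self-contained one could alternatively invoke the Schur--Cohn test directly: for $\lambda^2+a_1\lambda+a_0$ all roots satisfy $|\lambda|<1$ iff $|a_0|<1$ and $|a_1|<1+a_0$, and the computation above verifies both with $a_1=-\textrm{tr}(J^*)$, $a_0=\det(J^*)$.

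I do not anticipate a genuine obstacle; the only point requiring care is the bookkeeping around the absolute value, namely observing that the single inequality $|\textrm{tr}(J^*)|-1<\det(J^*)$ simultaneously encodes $F^*(1)>0$, $F^*(-1)>0$ and $\det(J^*)>-1$, so that together with $\det(J^*)<1$ the complete set of Jury conditions is recovered. It would also be worth remarking, as in the controlled-Jacobian computation preceding the theorem, that $\textrm{tr}(J^*)$ and $\det(J^*)$ are the explicit functions of $\zeta$, $\delta$ and the model parameters displayed above, so the stability region in $(\zeta,\delta)$-space is exactly the set cut out by these inequalities; this ties the abstract criterion back to the hybrid chaos-control application.
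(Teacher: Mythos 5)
Your proposal is correct and follows essentially the same route as the paper, which rests on the Jury/Schur--Cohn conditions already packaged in Lemma~\ref{paper6-lem1} (the paper itself only cites \cite{paper6-Yuan} rather than spelling out the computation). Your bookkeeping — splitting $|\textrm{tr}(J^*)|<1+\det(J^*)$ into $F^*(\pm 1)>0$ and noting that it also forces $\det(J^*)>-1$ — is exactly the verification needed, so nothing is missing.
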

\section{Results and discussions}\label{paper6-sec5}
In this section, to verify the aforementioned theoretical analysis numerically by exploiting complicated dynamical behaviors such as bifurcation diagrams and phase portraits for the model \eqref{paper6-mod2}. Particularly the Macroalgae and coral reef populations dynamical behaviors are analyzed in marine ecosystems under intrinsic growth rate.

\begin{figure}[h]\textit{}
	\centering
	\begin{subfigure}
		\centering
		\includegraphics[width=7.5cm,height=5cm]{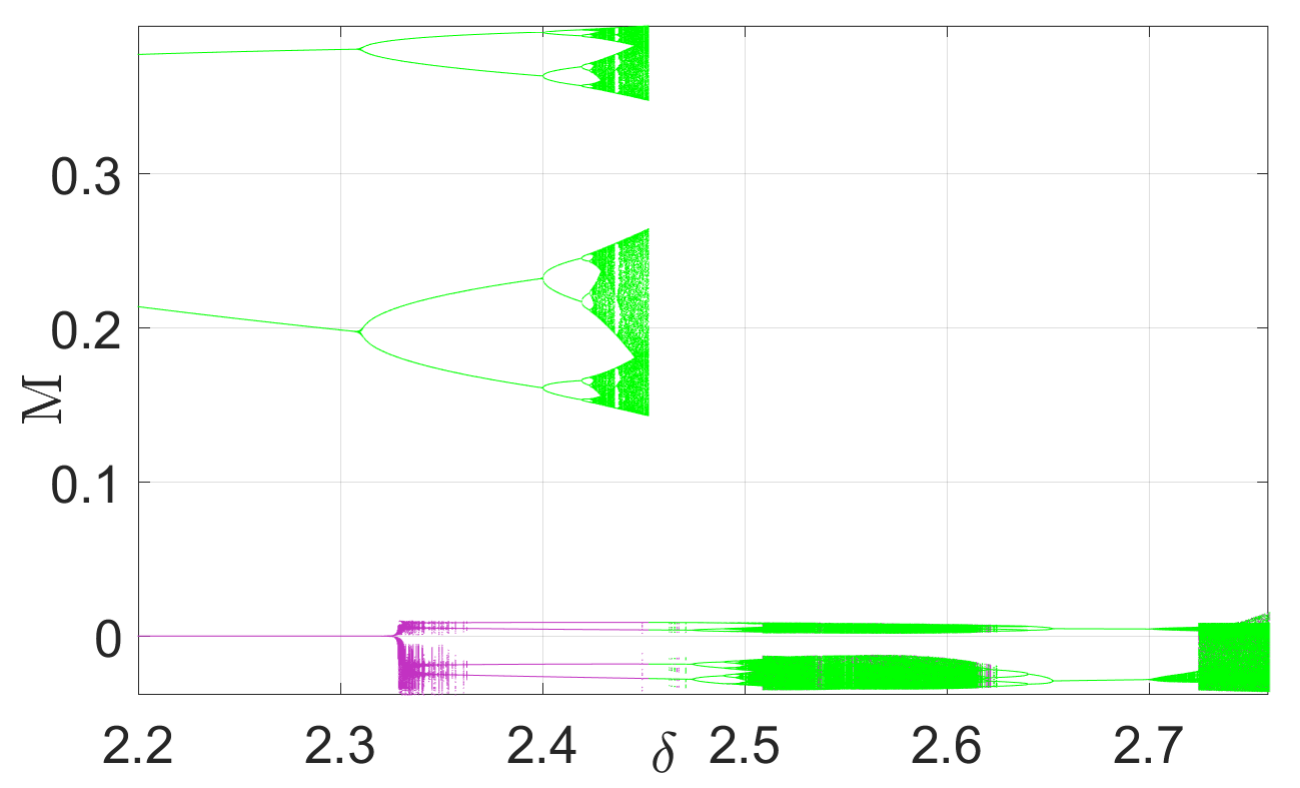}
	\end{subfigure}
	\begin{subfigure}
		\centering
		\includegraphics[width=7.5cm,height=5cm]{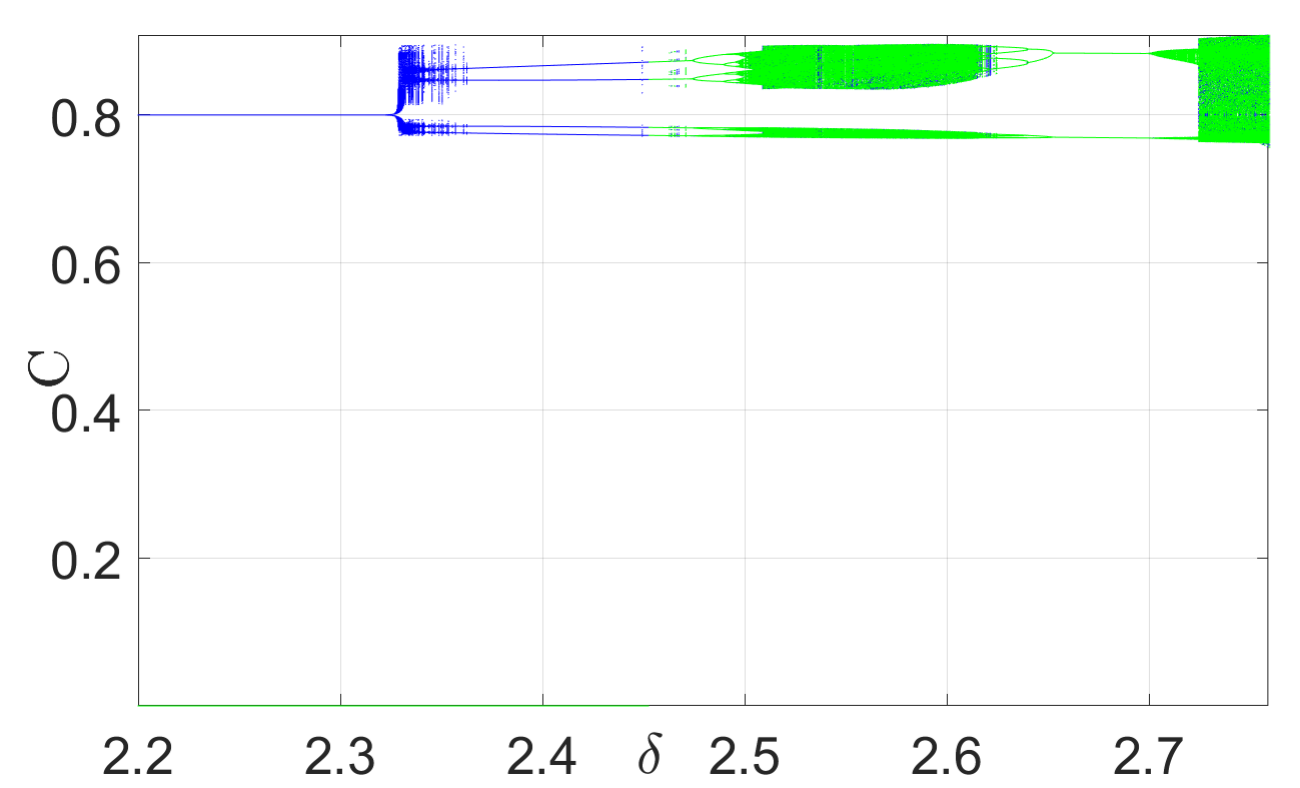}
	\end{subfigure}
	\caption{Reverse period-doubling bifurcation for $\delta$ versus $M$ and $C$.}
	\label{paper6-bif-r-0.76}
\end{figure}

Consider the following set of parametric value for the proposed model \eqref{paper6-mod2}, $k=0.3,$ $a=0.9,$ $g=0.5,$ $\gamma=0.8,$ $d=0.1,$ $\alpha=0.5$ with $\delta$ varies in range $[2.2, 2.75]$ and take initial condition as $(M_0, C_0)=(0.04, 0.66)$. The model \eqref{paper6-mod2} endures full Feigenbaum remerging tree for $r=0.76$. The term “full Feigenbaum three” refers to the situation where the sequence of period-doubling leading to chaos is followed by the reverse process when a parameter varies monotonously. Reverse period-doubling bifurcation diagrams for $M$ and $C$ for the proposed macroalgae-coral reef model with macroalgae intrinsic growth rate $r=0.76$ with other parameter fixed given previously is given in Figure \ref{paper6-bif-r-0.76}. 

\begin{figure}[h]\textit{}
	\centering
	\begin{subfigure}
		\centering
		\includegraphics[width=7.5cm,height=5cm]{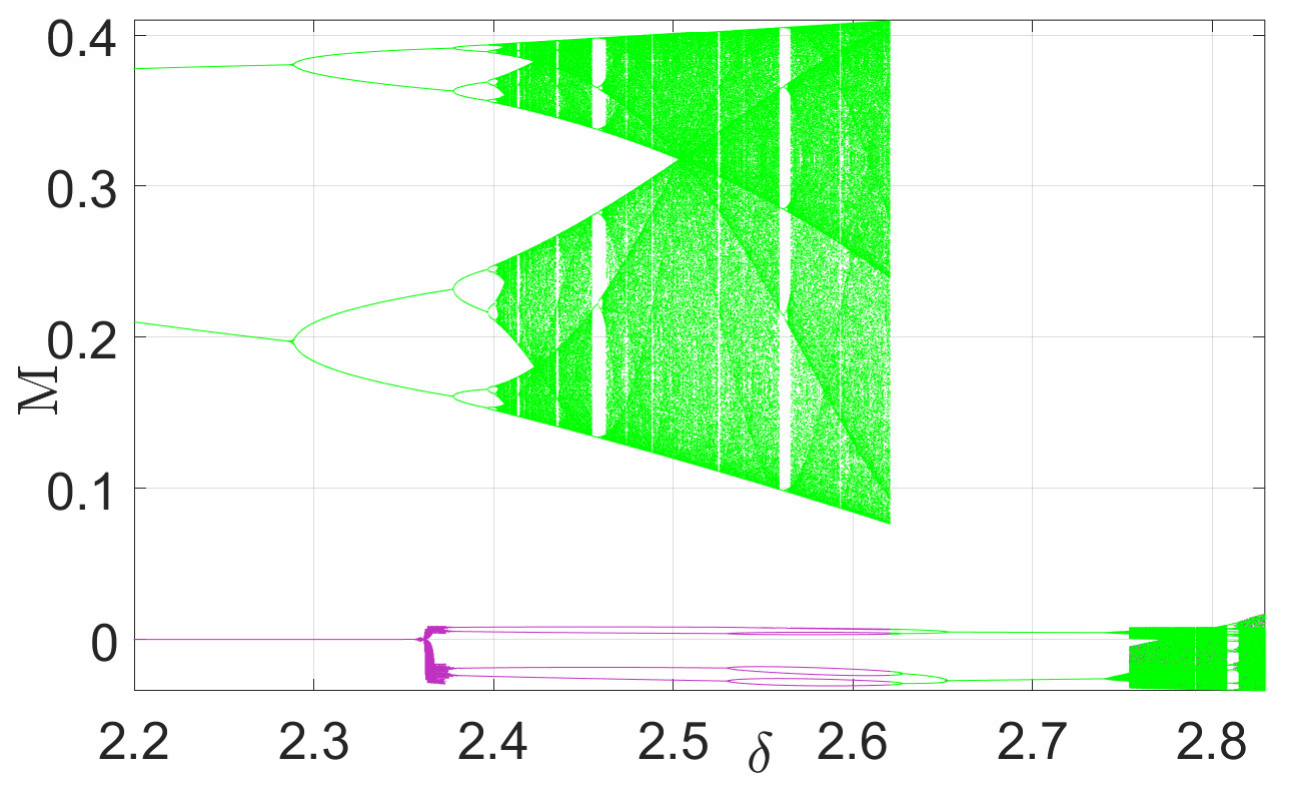}
	\end{subfigure}
	\begin{subfigure}
		\centering
		\includegraphics[width=7.5cm,height=5cm]{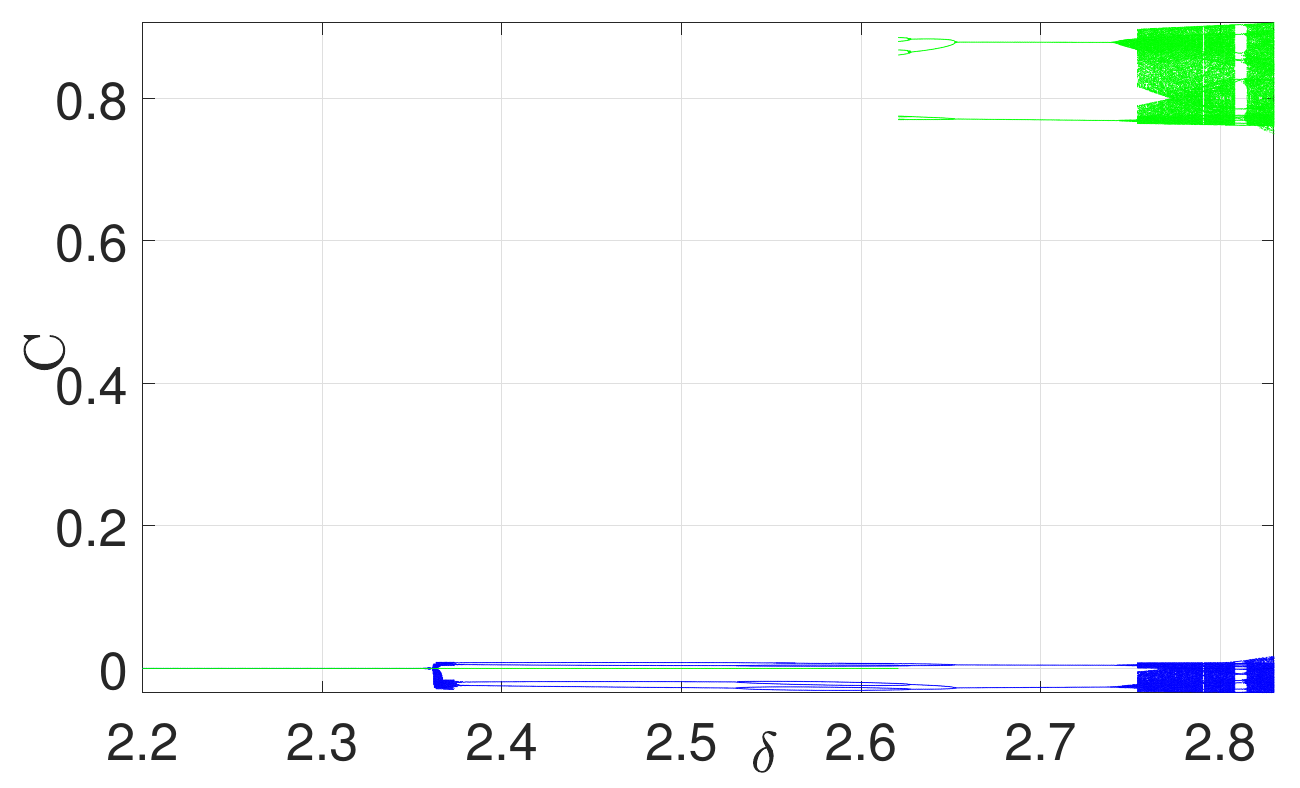}
	\end{subfigure}
	\caption{Period-8 bubble bifurcation for $\delta$ versus $M$ and $C$.}
	\label{paper6-bif-r-0.77}
\end{figure}

\begin{figure}[h]\textit{}
	\centering
	\begin{subfigure}
		\centering
		\includegraphics[width=7.5cm,height=5cm]{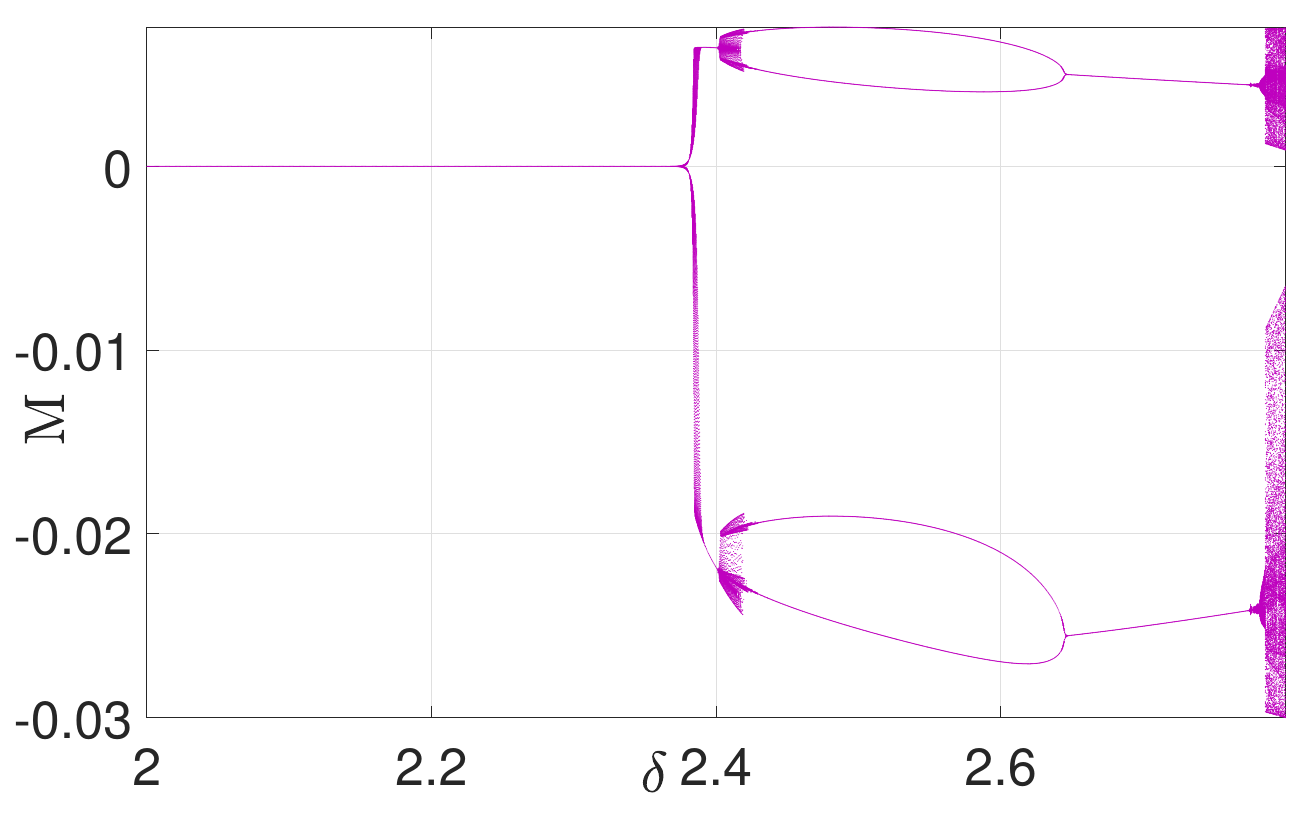}
	\end{subfigure}
	\begin{subfigure}
		\centering
		\includegraphics[width=7.5cm,height=5cm]{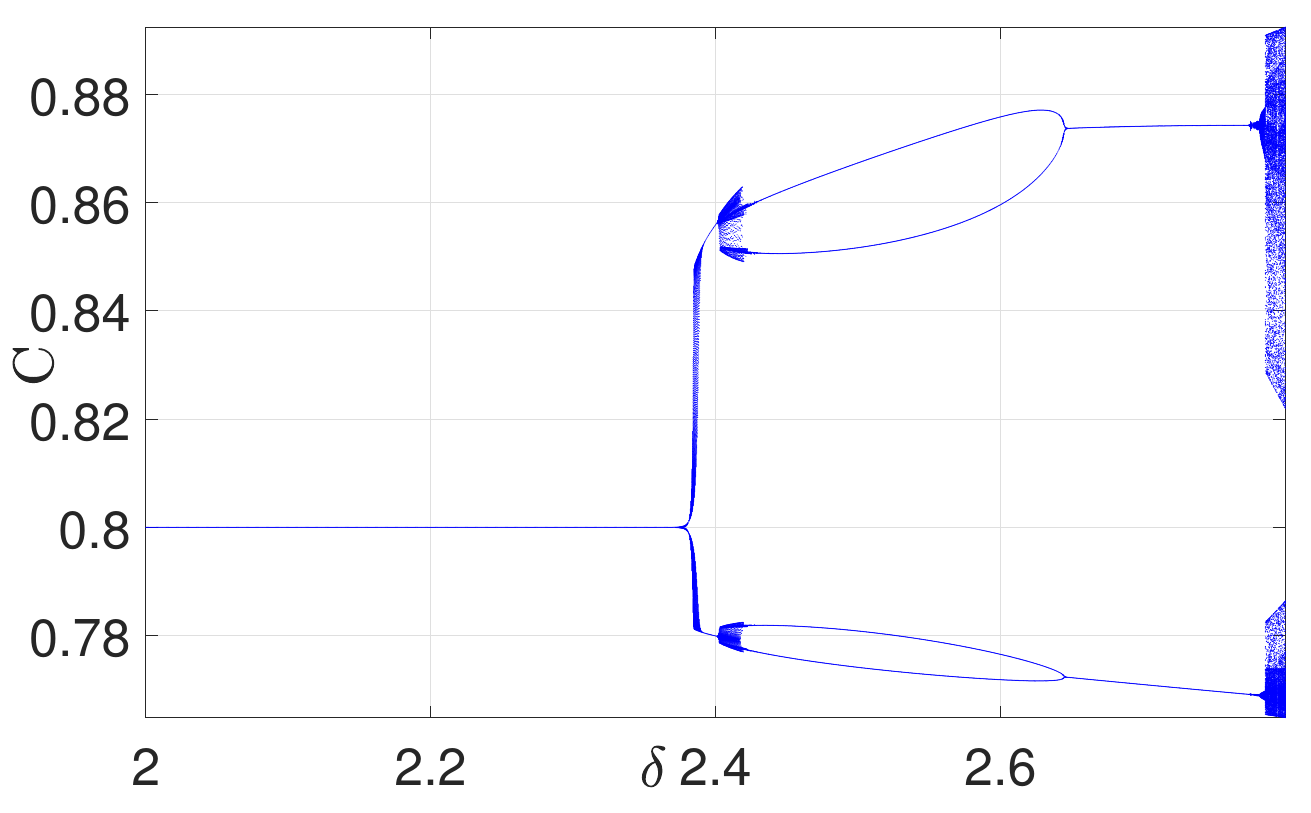}
	\end{subfigure}
	\caption{Period-4 bubble bifurcation for $\delta$ versus $M$ and $C$.}
	\label{paper6-bif-r-0.78}
\end{figure}

If we change the intrinsic growth rate as $r=0.77$ with same fixed set of parameters and $\delta$ varies in the range $[2.2, 2.8]$, the model \eqref{paper6-mod2} creates a primary period-8 bubble which is given in Figure \ref{paper6-bif-r-0.77}. The bifurcation diagram for the model with the same fixed set of parameters is shown with two different initial conditions: (0.04, 0.66) plotted in violet and (0.035, 0.59) plotted in green. As depicted in Figure \ref{paper6-bif-r-0.76} and \ref{paper6-bif-r-0.77}, it is evident that the model exhibits multistability. Period-4 bubble is generated for $r=0.78$ with $\delta$ varies in the range $=[2.2, 2.8]$ which is given in Figure \ref{paper6-bif-r-0.78}. Similarly, a period-24 bubble is developed for $r=0.8$ which is given in Figure \ref{paper6-bif-r-0.8}. Moreover interesting and exciting bifurcation exist for $r=0.75$ and $r=0.78$ which consists of several reversal period-doubling bifurcation which is given Figures \ref{paper6-bif-r-0.795} and \ref{paper6-bif-r-0.798}. When further increasing the values to $r=1$, the model undergoes period-doubling bifurcation  which is given in Figure \ref{paper6-bif-r-1}.

\begin{remark}
	The growth rate of macroalgae $r$ has a significant impact on the dynamics of the model even if we merely change the intrinsic growth rate and leave the other parameters fixed. 
\end{remark}

We provided the phase portrait for the above set of parameters, with $\delta$ fixed at $2.8$, and $r$ varied as follows: (a) $r=0.795$, (b) $r=0.78$, (c) $r=0.77$, and (d) $r=0.765$ (see Figure \ref{Paper6-pp-0.795}). The figures depict chaotic attractors with varying values of $r$. Consider the following set of parameters: $r=0.5$, $k=0.7$, $a=0.65$, $g=0.3$, $\gamma=0.4$, $d=0.1$, $\alpha=0.6$, and $\delta=3.3$, with different initial conditions: (e) (0.005, 0.6), (f) (0.004, 0.6), and (g) (0.001, 0.6) (see Figure \ref{Paper6-pp-0.795}). The figures depict coexisting attracting orbits.

\begin{figure}[h]\textit{}
	\centering
	\begin{subfigure}
		\centering
		\includegraphics[width=7.5cm,height=5cm]{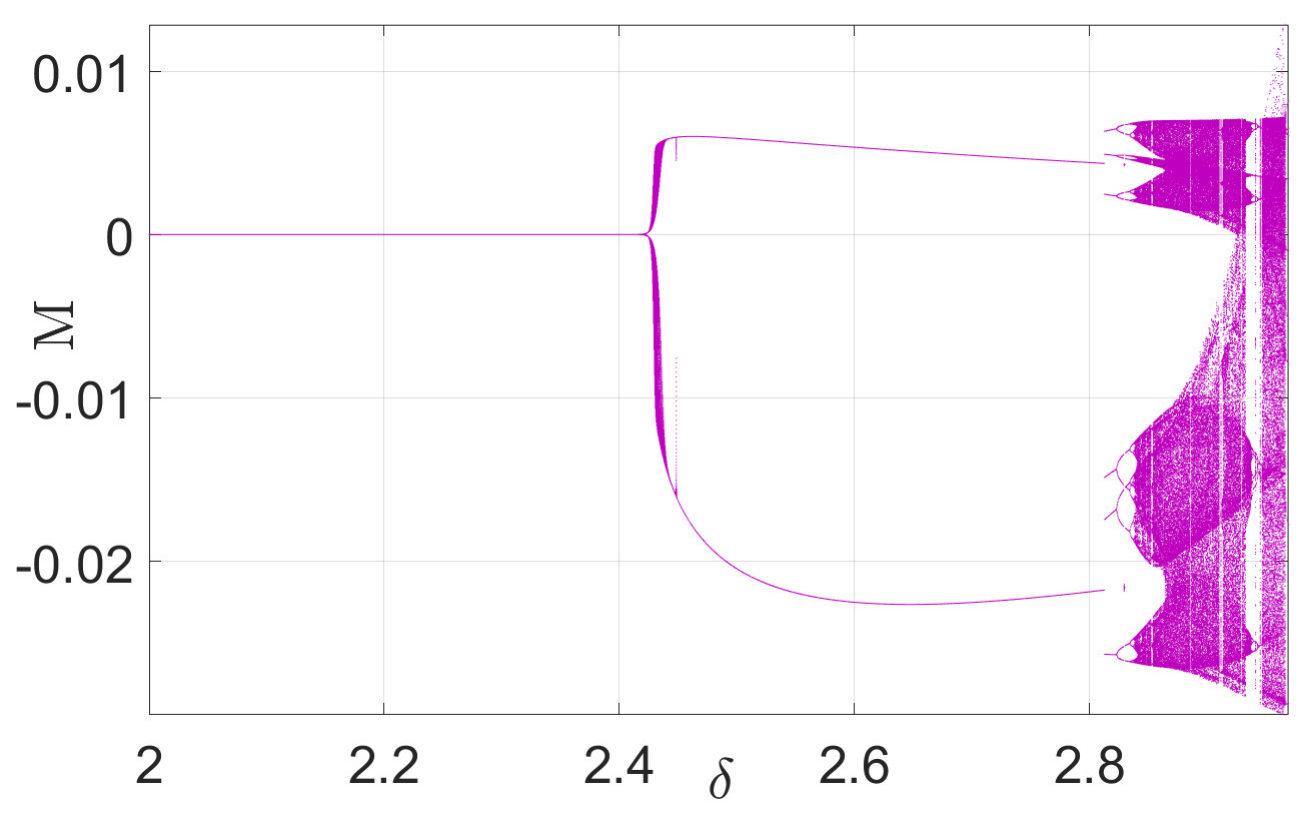}
	\end{subfigure}
	\begin{subfigure}
		\centering
		\includegraphics[width=7.5cm,height=5cm]{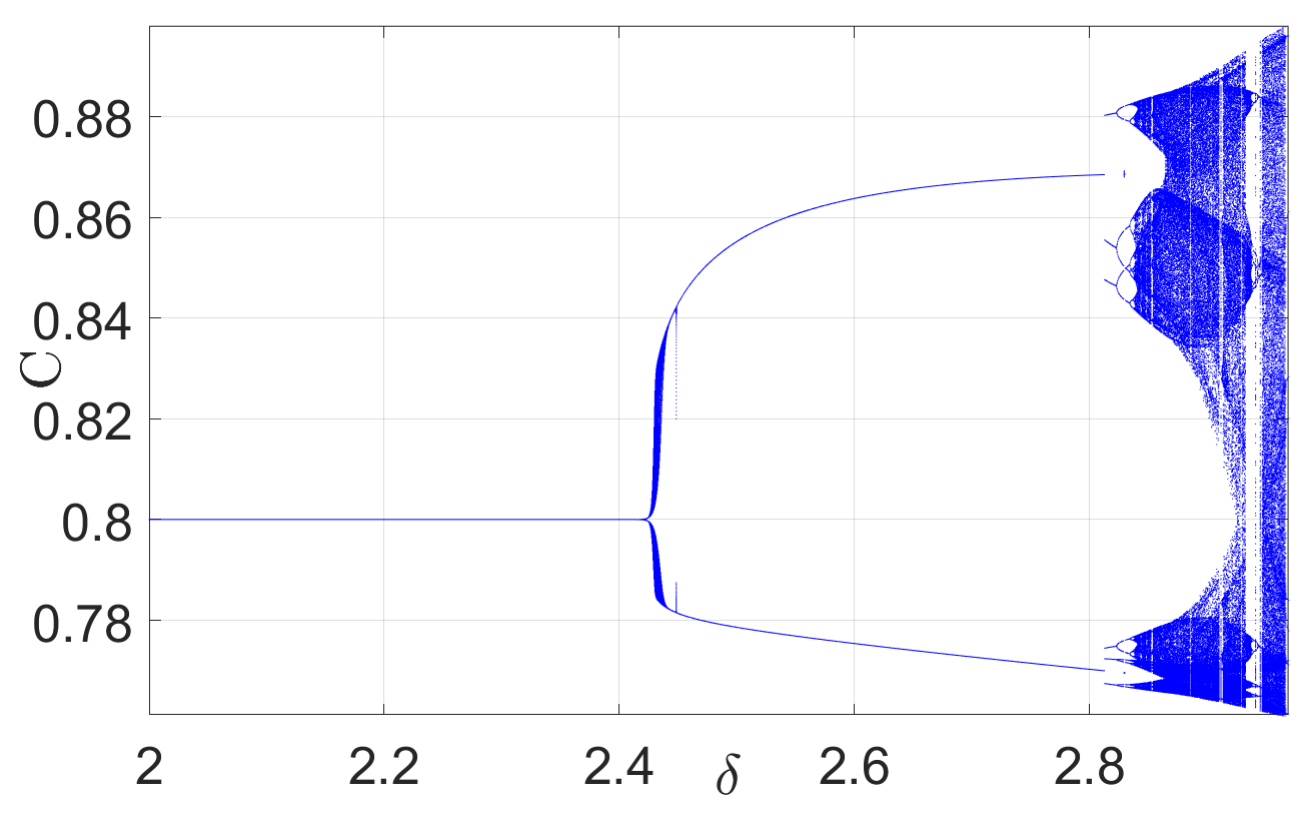}
	\end{subfigure}
	\caption{Bifurcation diagram for $r=0.795$.}
	\label{paper6-bif-r-0.795}
\end{figure}

\begin{figure}[h]\textit{}
	\centering
	\begin{subfigure}
		\centering
		\includegraphics[width=7.5cm,height=5cm]{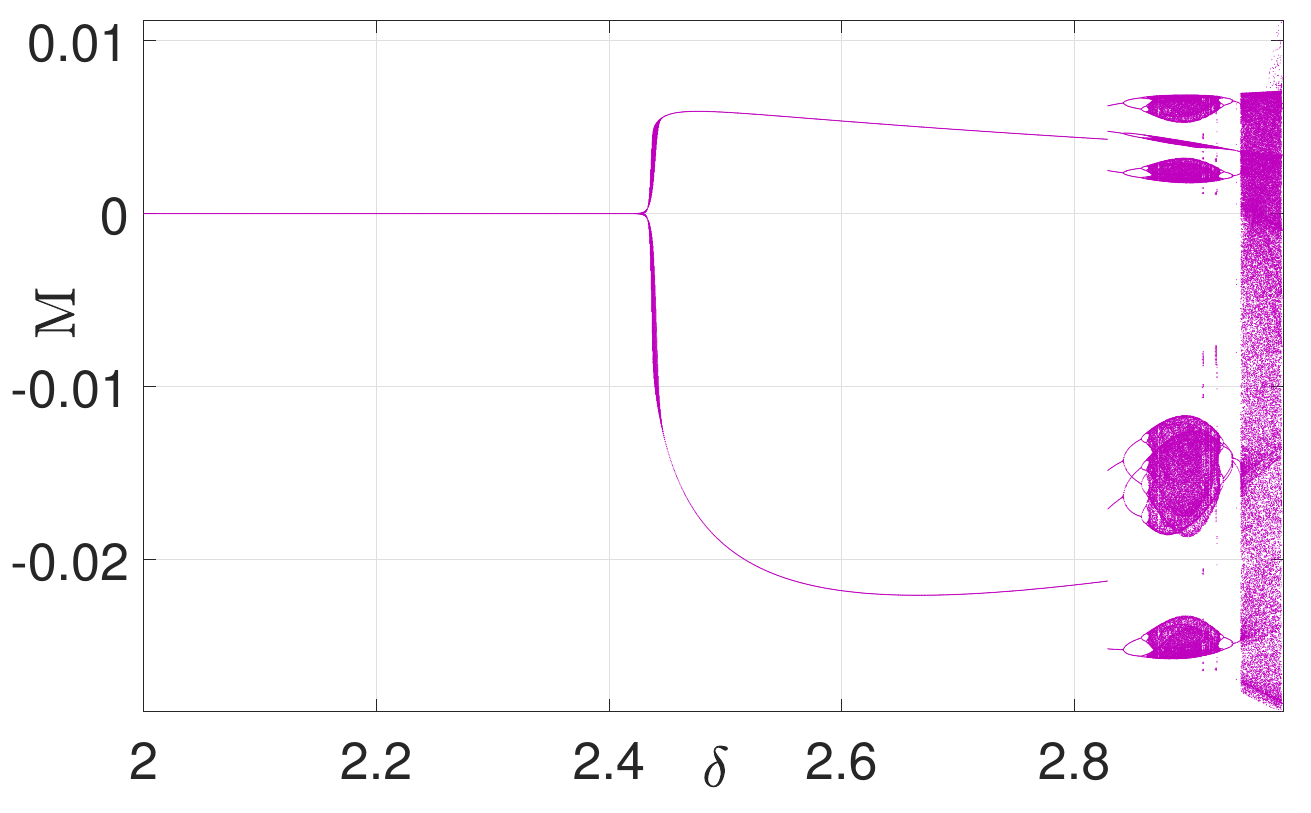}
	\end{subfigure}
	\begin{subfigure}
		\centering
		\includegraphics[width=7.5cm,height=5cm]{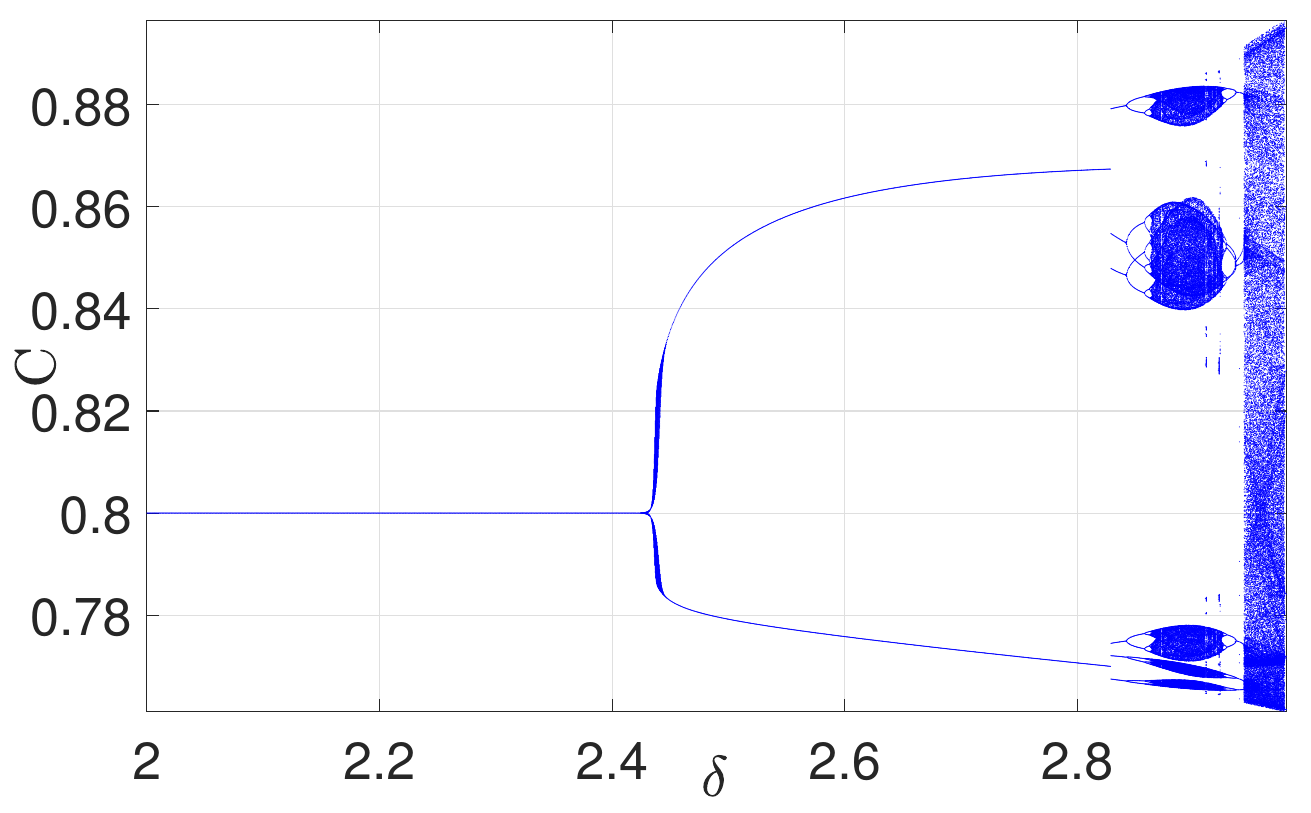}
	\end{subfigure}
	\caption{Bifurcation diagram for $r=0.798$.}
	\label{paper6-bif-r-0.798}
\end{figure}

\begin{remark}
	In summary, the observation from Figure \ref{paper6-bif-r-0.76} to Figure \ref{paper6-bif-r-0.8} underscores the significant finding that the bifurcation point of the model exhibits a delayed response when subjected to incremental increases in the intrinsic growth rate, while all other pertinent parameters remain constant.
\end{remark}

\begin{case}
	Consider $r=0.5,$ $k=0.7,$ $a=0.65,$ $g=0.3,$ $\gamma=0.4,$ $d=0.1,$ and $\alpha=0.6,$ with $\delta$ varies in the range $[2.8, 3.32].$
\end{case}

For Case 1 with the given parameters, the model experience Niemark-Sacker bifurcation with initial condition $(M_0, C_0) = (0.06, 0.7)$, $\delta$ varies in the range $[2.8, 3.32]$. The bifurcation diagram corresponds to Case 1 parameters in the $(\delta, M)$ and $(\delta, C)$ plane are given in Figure \ref{paper6-bif-1}. We fix the value of $\delta=3.1$, then the corresponding phase portrait is given in Figure \ref{paper6-bif-r-1-pp}.

\begin{case}
	Consider $r=0.6,$ $k=0.9,$ $a=0.45,$ $g=0.3,$ $\gamma=0.4,$ and $d=0.1,$ with $\delta$ varies in the range $[2.5, 3.15].$
\end{case}

\begin{figure}[h]\textit{}
	\centering
	\begin{subfigure}
		\centering
		\includegraphics[width=7.5cm,height=5cm]{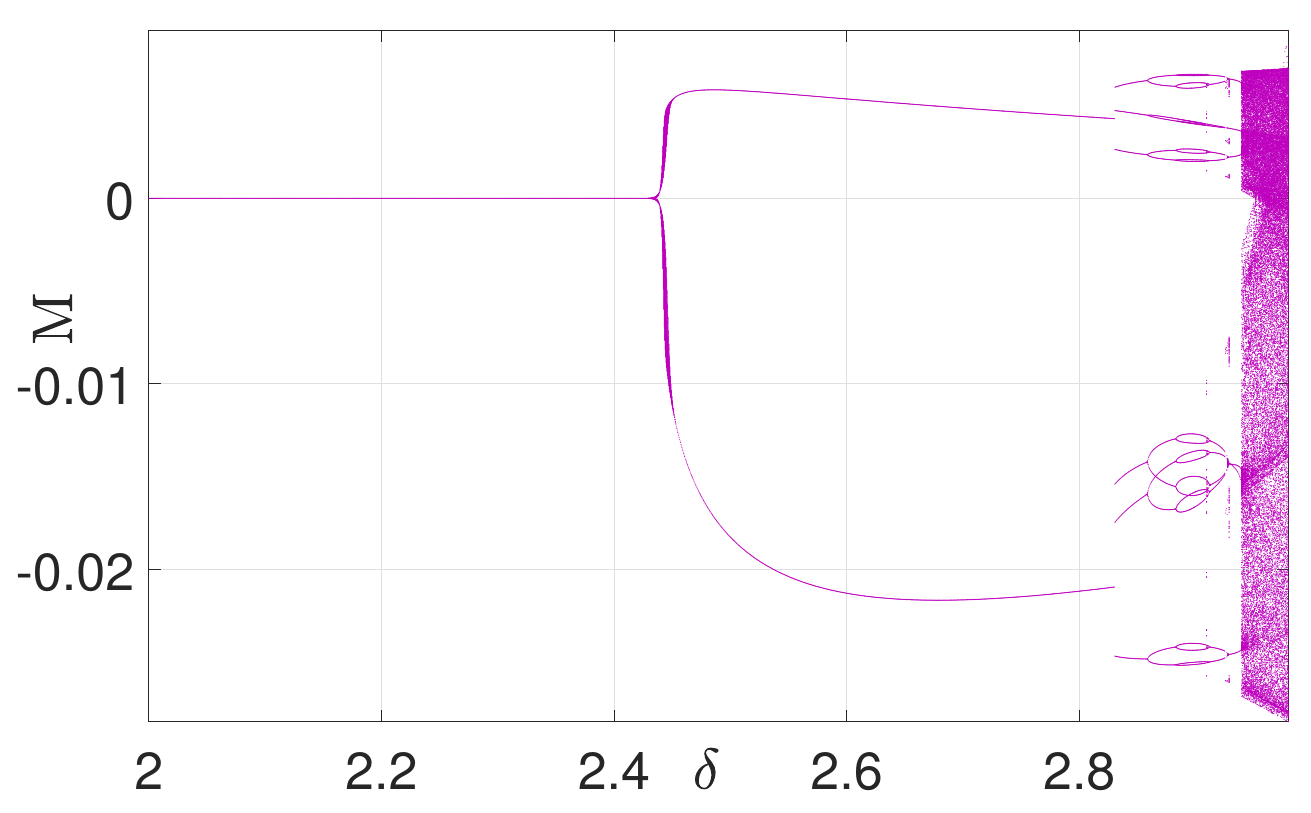}
	\end{subfigure}
	\begin{subfigure}
		\centering
		\includegraphics[width=7.5cm,height=5cm]{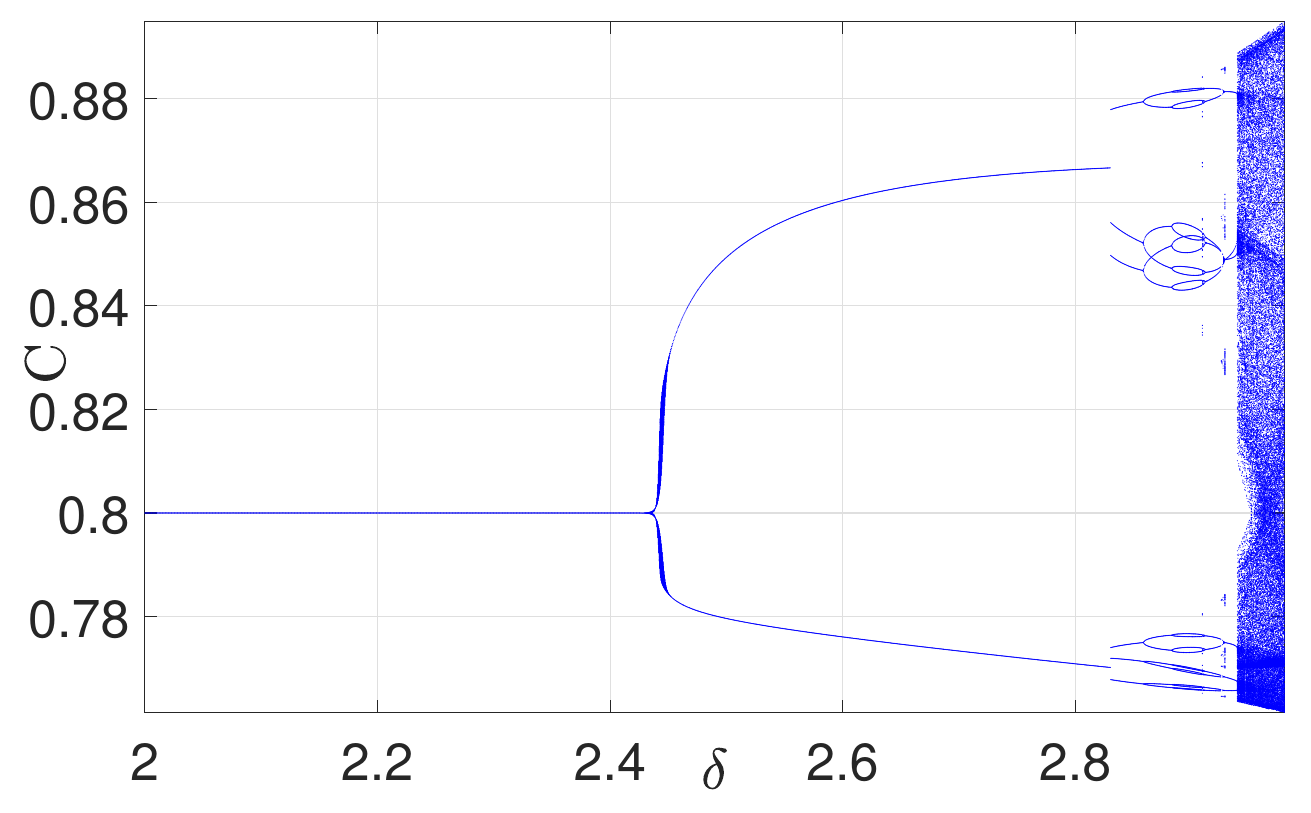}
	\end{subfigure}
	\caption{Period-24 bubble bifurcation for $\delta$ versus $M$ and $C$.}
	\label{paper6-bif-r-0.8}
\end{figure}

\begin{figure}[h]\textit{}
	\centering
	\begin{subfigure}
		\centering
		\includegraphics[width=7.5cm,height=5cm]{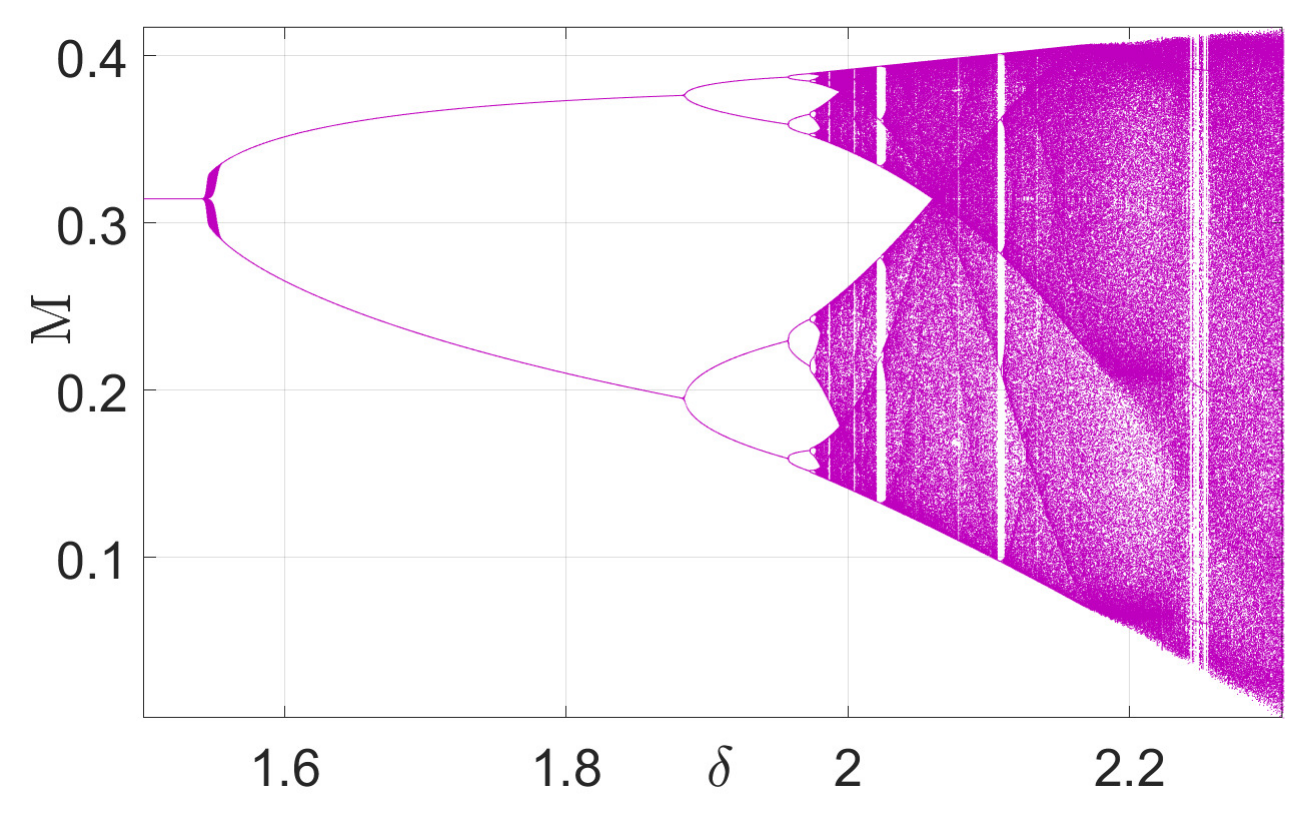}
	\end{subfigure}
	\begin{subfigure}
		\centering
		\includegraphics[width=7.5cm,height=5cm]{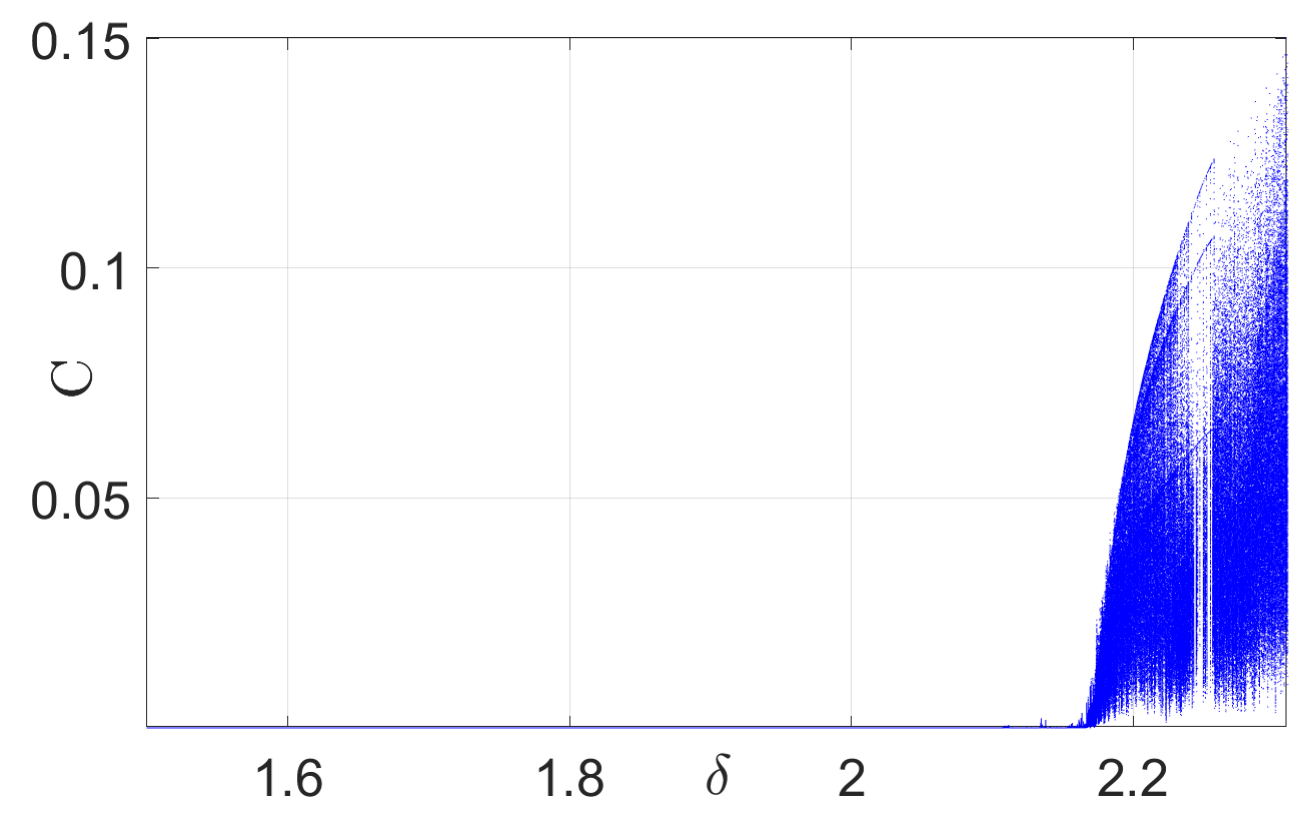}
	\end{subfigure}
	\caption{Bifurcation diagram showing the emergence of period-doubling bifurcation for $r=1.$}
	\label{paper6-bif-r-1}
\end{figure}

\begin{figure}[h]
	\begin{minipage}[b]{0.4\textwidth}
		\centering
		\includegraphics[width=\linewidth,height=3.7cm]{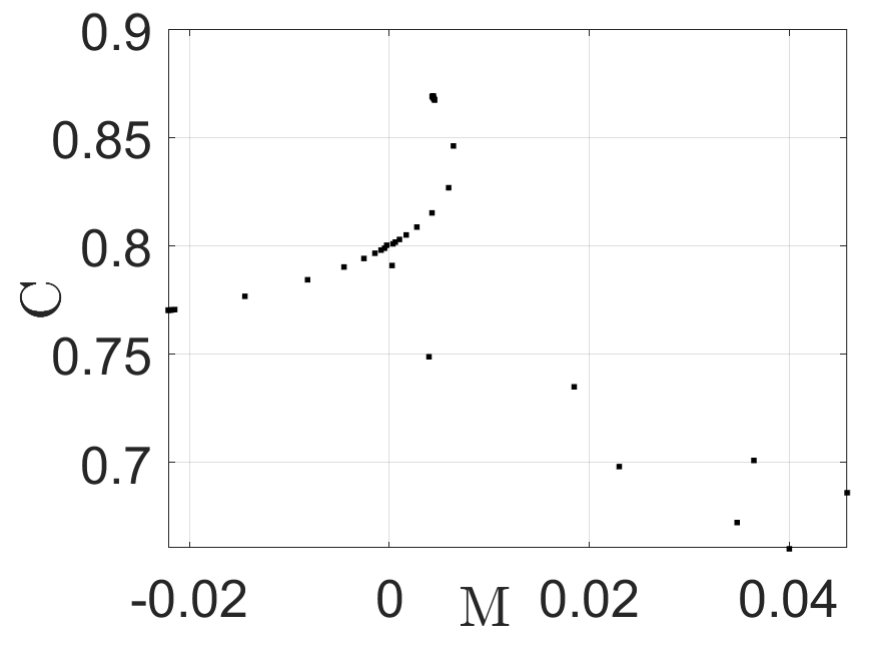}
		\caption*{(a)}
	\end{minipage}
	\begin{minipage}[b]{0.4\textwidth}
		\centering
		\includegraphics[width=\linewidth,height=3.7cm]{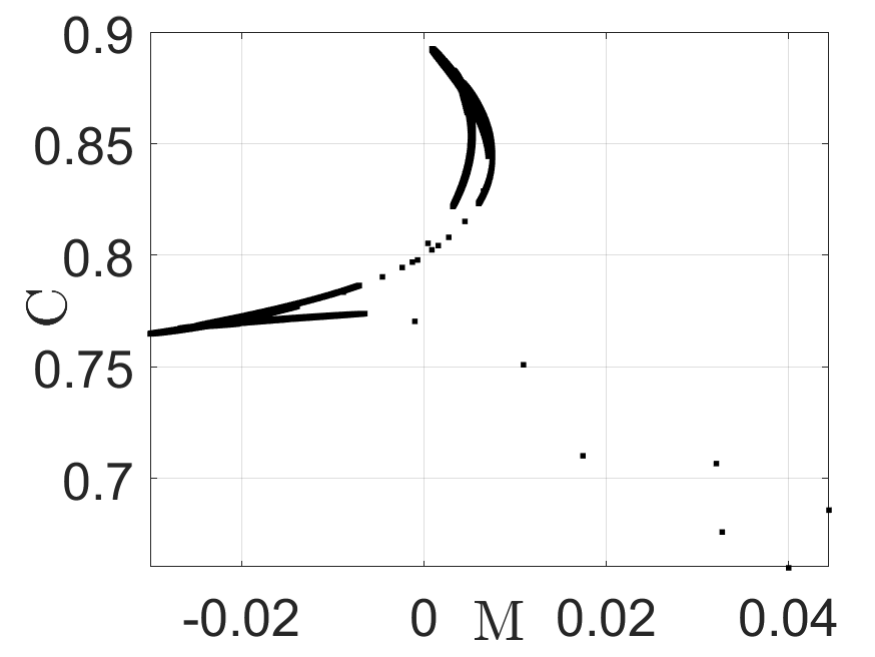}
		\caption*{(b)}
	\end{minipage}
	\begin{minipage}[b]{0.4\textwidth}
		\centering
		\includegraphics[width=\linewidth,height=3.7cm]{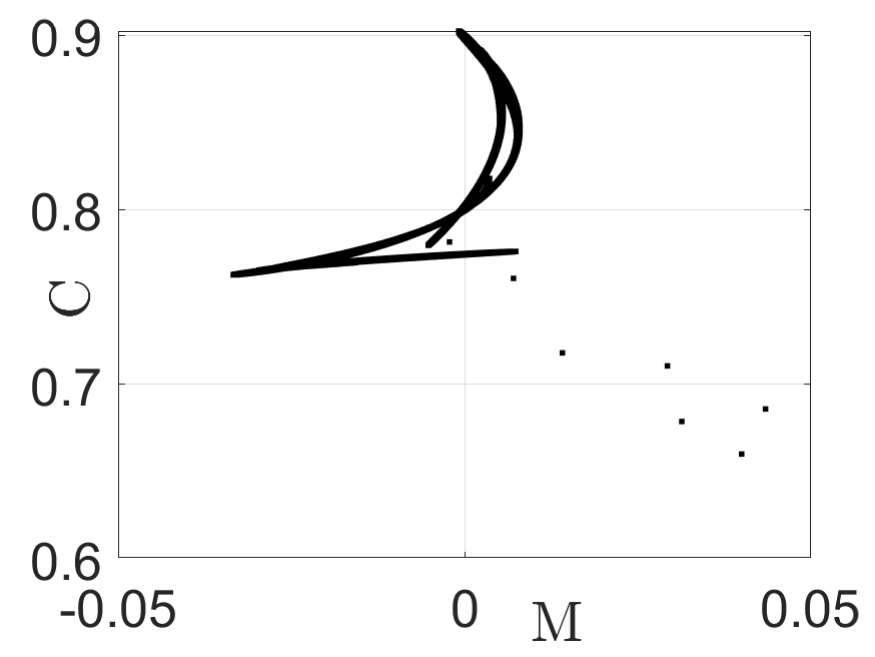}
		\caption*{(c)}
	\end{minipage}
	\begin{minipage}[b]{0.4\textwidth}
		\centering
		\includegraphics[width=\linewidth,height=3.7cm]{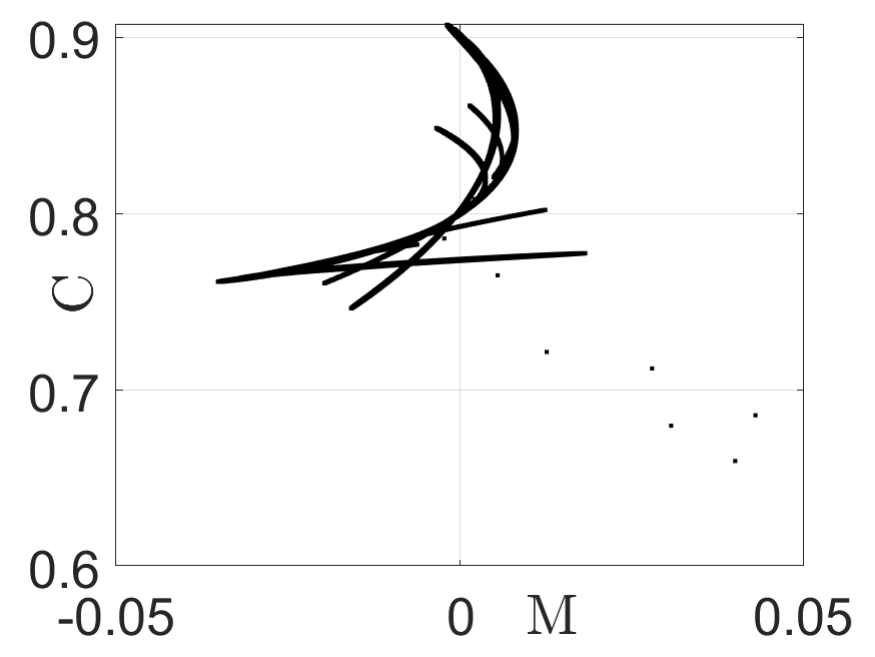}
		\caption*{(d)}
	\end{minipage}
	\begin{minipage}[b]{0.4\textwidth}
		\centering
		\includegraphics[width=\linewidth,height=3.7cm]{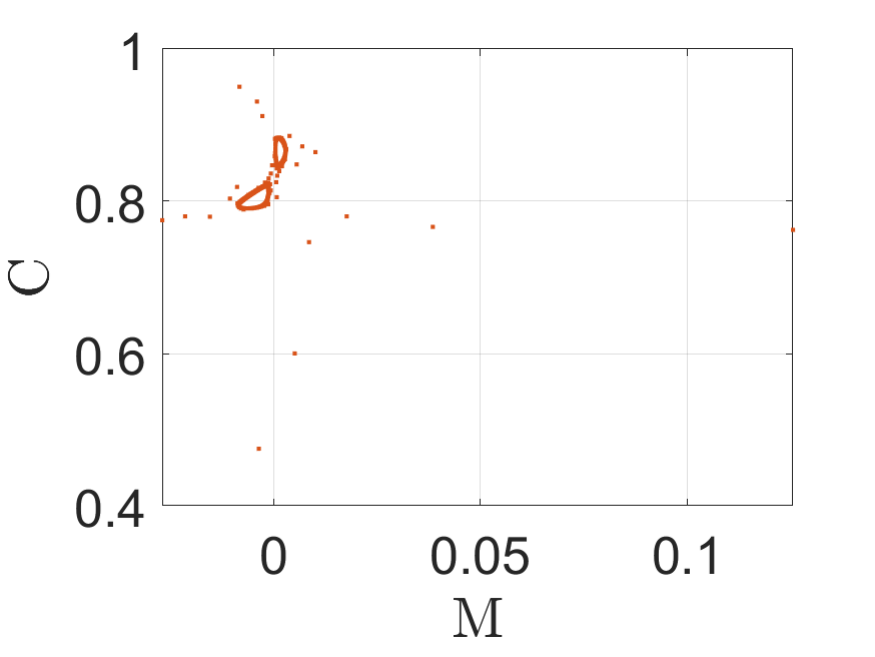}
		\caption*{(e)}
	\end{minipage}
	\begin{minipage}[b]{0.4\textwidth}
		\centering
		\includegraphics[width=\linewidth,height=3.7cm]{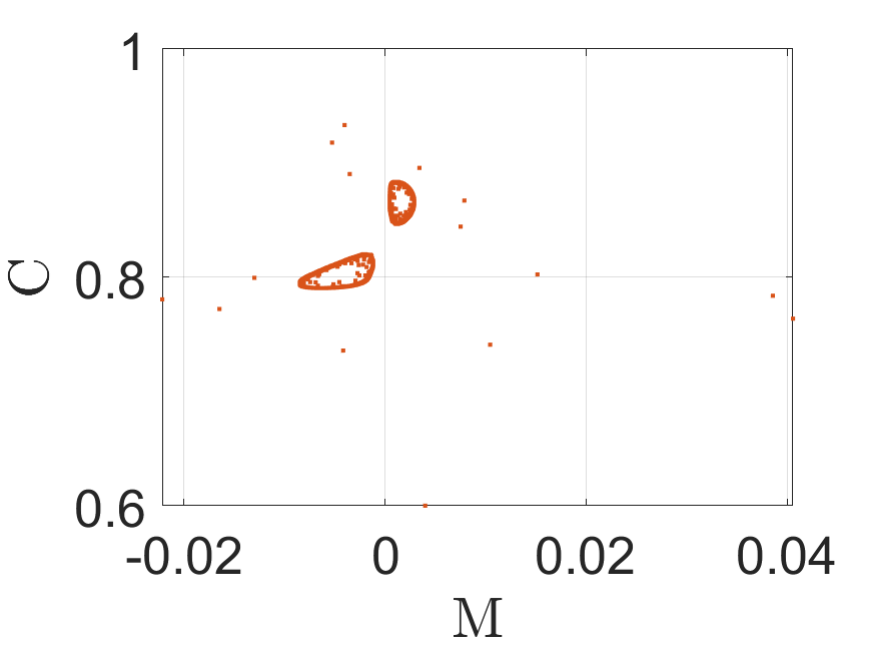}
		\caption*{(f)}
	\end{minipage}
	\begin{minipage}[b]{0.4\textwidth}
		\centering
		\includegraphics[width=\linewidth,height=3.7cm]{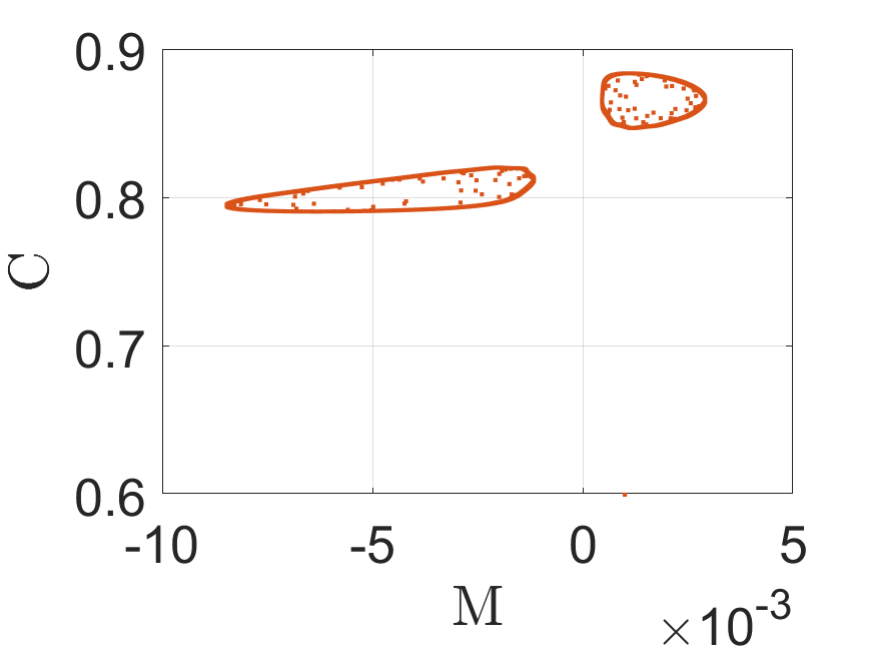}
		\caption*{(g)}
	\end{minipage}
	\caption{Phase portraits for the above set of parameters are depicted with fixed $\delta=2.8$ and varying $r$ values: (a) $r=0.795$, (b) $r=0.78$, (c) $r=0.77$, and (d) $r=0.765$. Phase portraits with initial conditions: (e) (0.005, 0.6), (f) (0.004, 0.6), and (g) (0.001, 0.6).}
	\label{Paper6-pp-0.795}
\end{figure}

For Case 2 with the given parameters, the model experience Niemark-Sacker bifurcation with initial condition $(M_0, C_0) = (0.08, 0.68)$, $\delta$ varies in the range $[2.5, 3.15]$. The bifurcation diagram corresponds to Case 2 parameters in the $(\delta, M)$ and $(\delta, C)$ plane are given in Figure \ref{paper6-bif-2}. To illustrate the bifurcation nature of the model \eqref{paper6-mod2} with the set of parameters in Case 2, we fix the value of $\delta$ in the given range such as $\delta$ =2.95, 3, 3.05, and 3.1. Then the corresponding phase portraits are given in Figures \ref{paper6-bif-2-pp-3} and \ref{paper6-bif-2-pp-3.1}.

\begin{remark}
	In this remark, we verify the nature of period-2 orbits bifurcates from $(0.04, 0.66).$ For this set of parameters $r=1,$ $k=0.3,$ $a=0.9,$ $g=0.5,$ $\gamma=0.8,$ $d=0.1,$ $\alpha=0.5$, we have eigenvalues $\lambda_1=-1$ and $\lambda_2=-12.3527$ for an interior equilibrium point equilibrium point $(0.04, 0.66)$ with $a_1=0.0134,$ $a_2=0.0666,$ $a_3=0,$ $b_1=-1.1347,$ $b_2=-4.7558,$ and $b_5=-0.1134.$ From $b_1,$ $b_2,$ and $b_5$ we have two discriminatory values $\Omega_1=-9.5115$ and $\Omega_2=2.3481$ Clearly, both the values $\Omega_1$ and $\Omega_2$ are non-zero real numbers. Hence, from Theorem \ref{paper6-Th2} the model experience flip bifurcation. Moreover, $\Omega_2>0$ which implies that period-2 orbits bifurcates from the $E^*(M^*, C^*)$ is stable.
\end{remark}

\begin{case}
	Consider $r=0.5,$ $k=0.7,$ $a=0.65,$ $g=0.3,$ $\gamma=0.4,$ $d=0.1,$ and $\alpha=0.6,$ with $\delta$ varies in the range $[2.8, 3.32].$
\end{case}

For Case 1 with the given parameters, the model experiences Niemark-Sacker bifurcation with initial condition $(M_0, C_0) = (0.06, 0.7)$, $\delta$ varies in the range $[2.8, 3.32]$. The bifurcation diagram corresponds to Case 1 parameters in the $(\delta, M)$ and $(\delta, C)$ plane are given in Figure \ref{paper6-bif-1}. We fix the value of $\delta=3.1$; the corresponding phase portrait is shown in Figure \ref{paper6-bif-r-1-pp}.

\begin{case}
	Consider $r=0.6,$ $k=0.9,$ $a=0.45,$ $g=0.3,$ $\gamma=0.4,$ $d=0.1,$ and $\alpha=0.6$ with $\delta$ varies in the range $[2.5, 3.15].$
\end{case}

For Case 2 with the given parameters, the model experiences Niemark-Sacker bifurcation with initial condition $(M_0, C_0) = (0.08, 0.68)$, $\delta$ varies in the range $[2.5, 3.15]$. The bifurcation diagram corresponds to Case 2 parameters in the $(\delta, M)$ and $(\delta, C)$ plane are given in Figure \ref{paper6-bif-2}. To illustrate the bifurcation nature of the model \eqref{paper6-mod2} with the set of parameters in Case 2, we fix the value of $\delta$ in the given range such as $\delta$ =2.95, 3, 3.05, and 3.1. Then, the corresponding phase portraits are shown in Figures \ref{paper6-bif-2-pp-3} and \ref{paper6-bif-2-pp-3.1}.

\begin{figure}[h]\textit{}
	\centering
	\begin{subfigure}
		\centering
		\includegraphics[width=7.5cm,height=5cm]{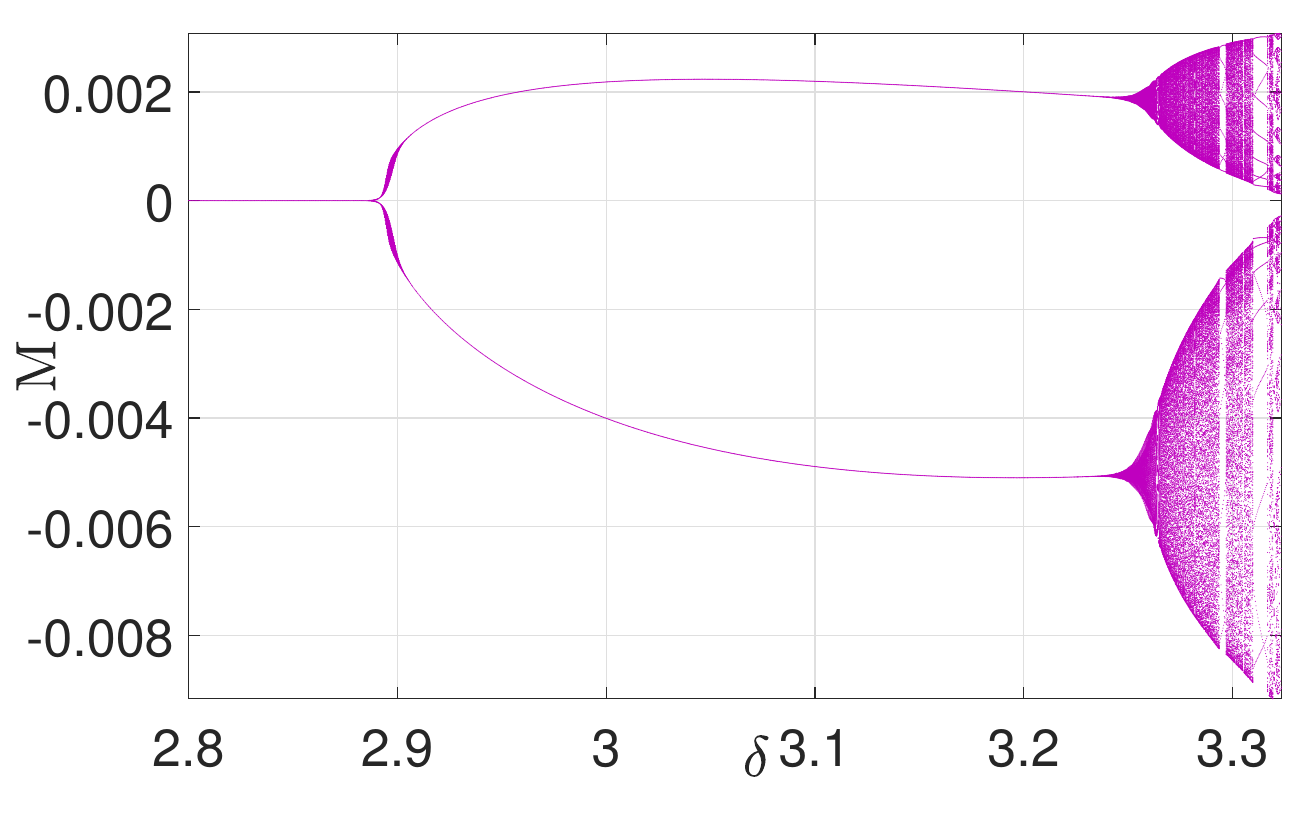}
	\end{subfigure}
	\begin{subfigure}
		\centering
		\includegraphics[width=7.5cm,height=5cm]{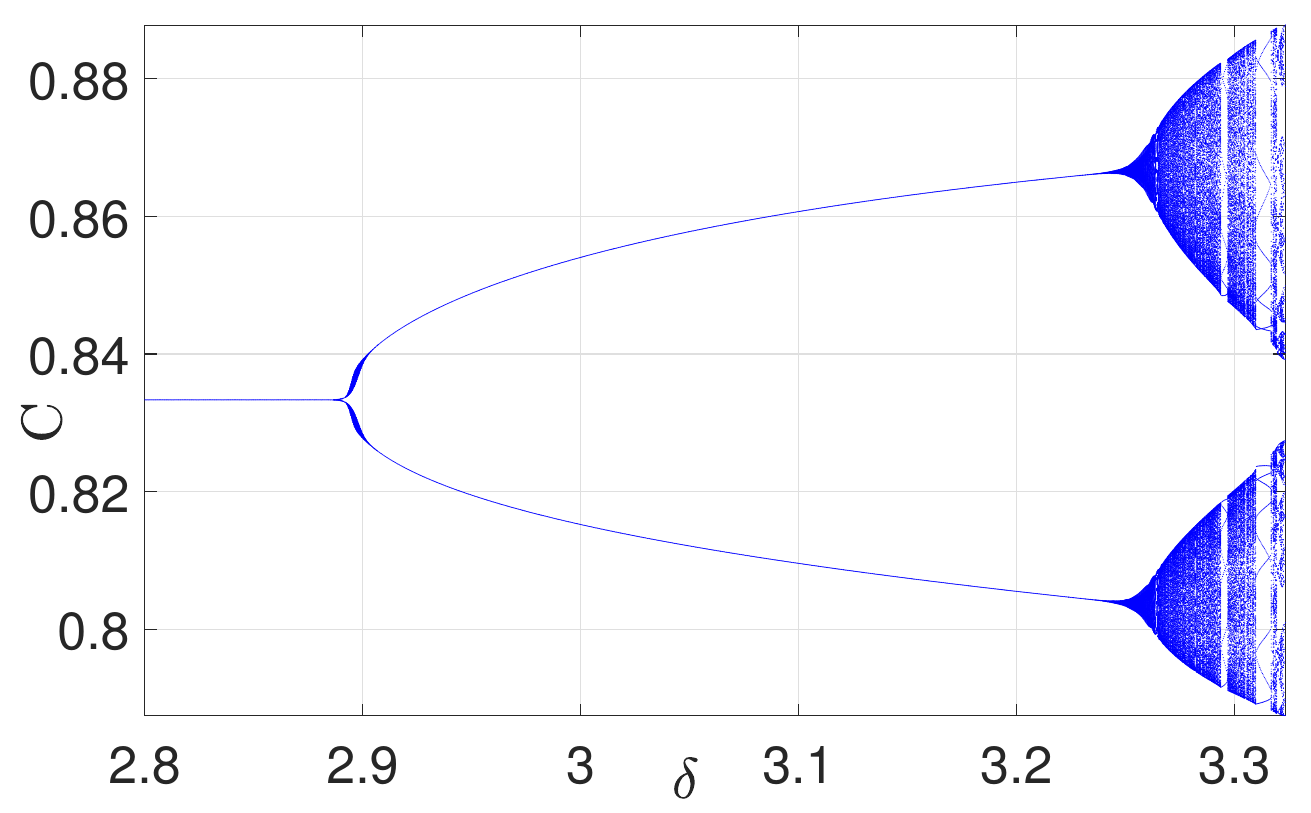}
	\end{subfigure}
	\caption{Bifurcation diagram showing the emergence of Neimark-Sacker bifurcation for Case 1.}
	\label{paper6-bif-1}
\end{figure}

\begin{figure}[h!]
	\begin{center}
		\includegraphics[width=10cm,height=5.5cm]{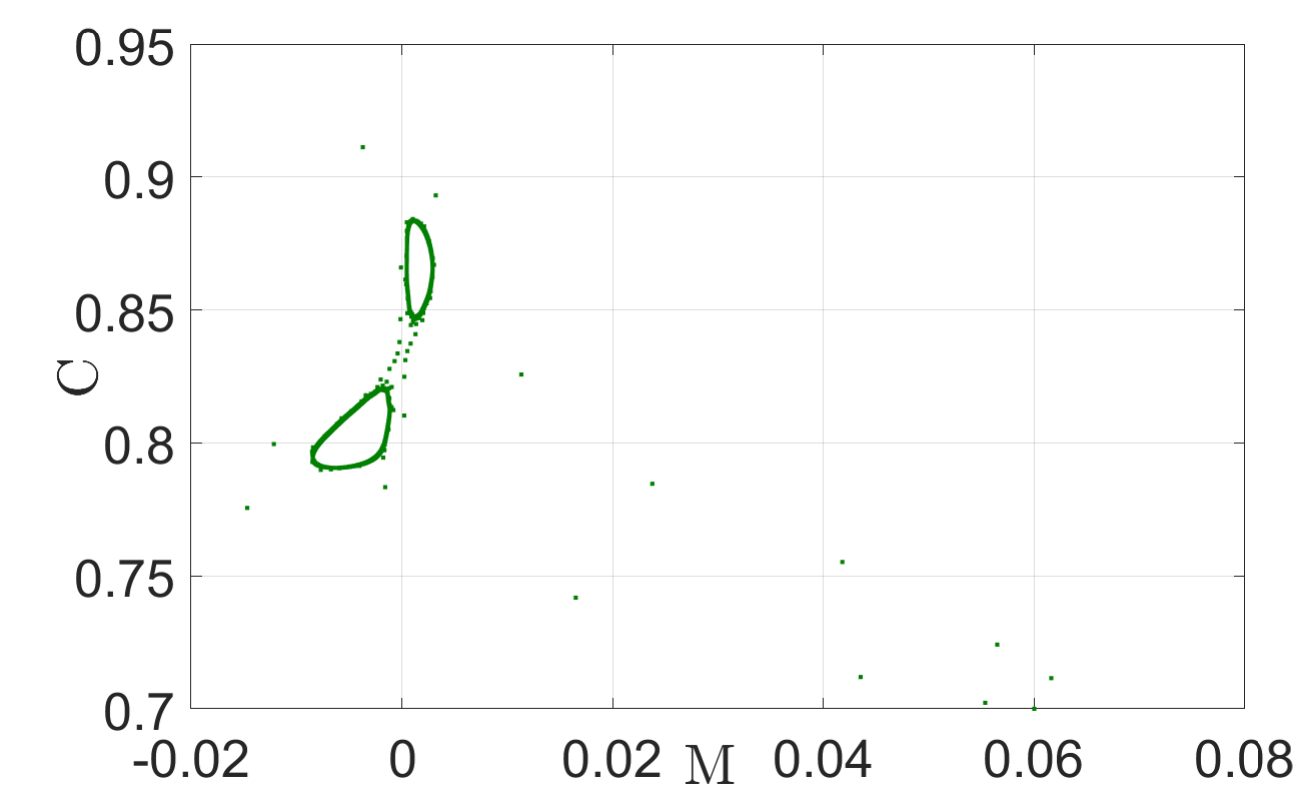}
		\caption{Phase portrait of Case 1 parameters for $\delta=3.1.$}
		\label{paper6-bif-r-1-pp}
	\end{center}
\end{figure}

\begin{figure}[h]\textit{}
	\centering
	\begin{subfigure}
		\centering
		\includegraphics[width=7.5cm,height=5cm]{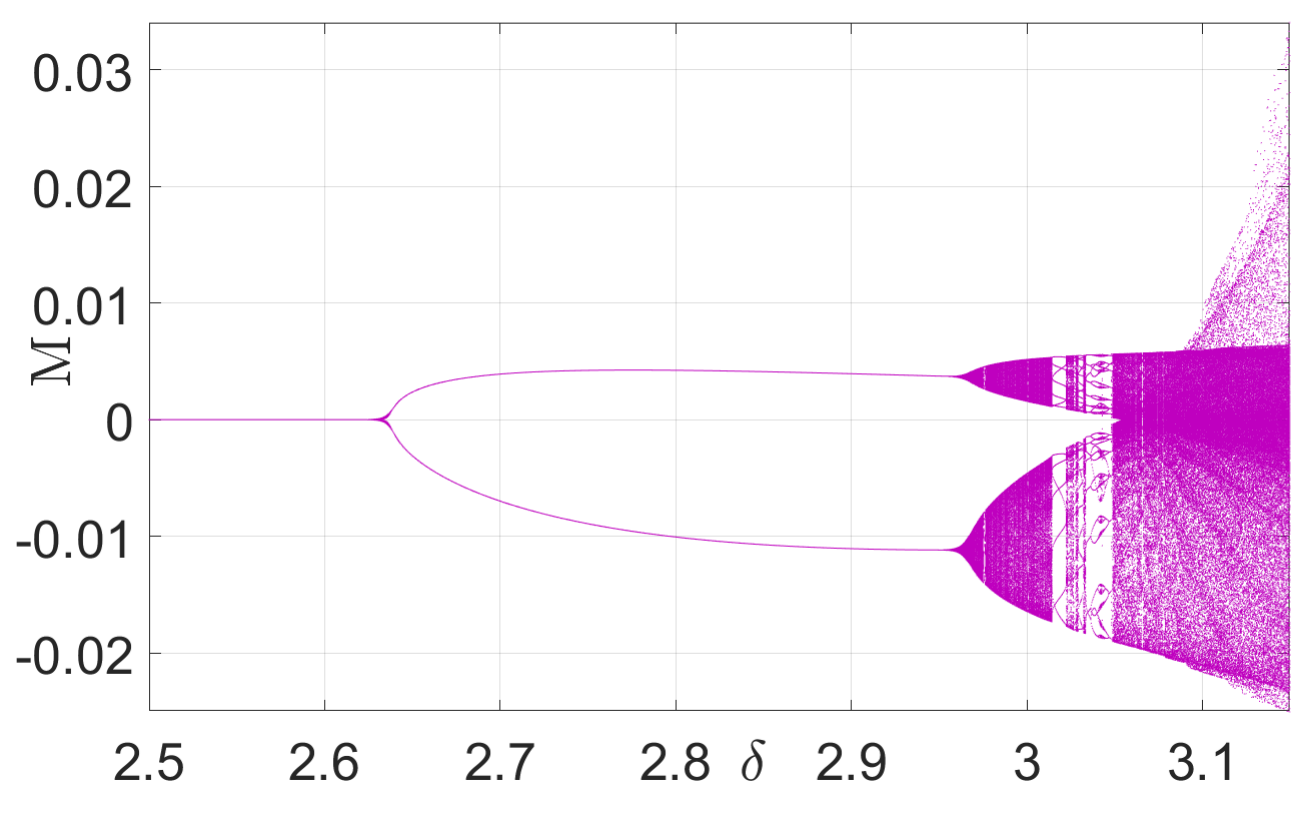}
	\end{subfigure}
	\begin{subfigure}
		\centering
		\includegraphics[width=7.5cm,height=5cm]{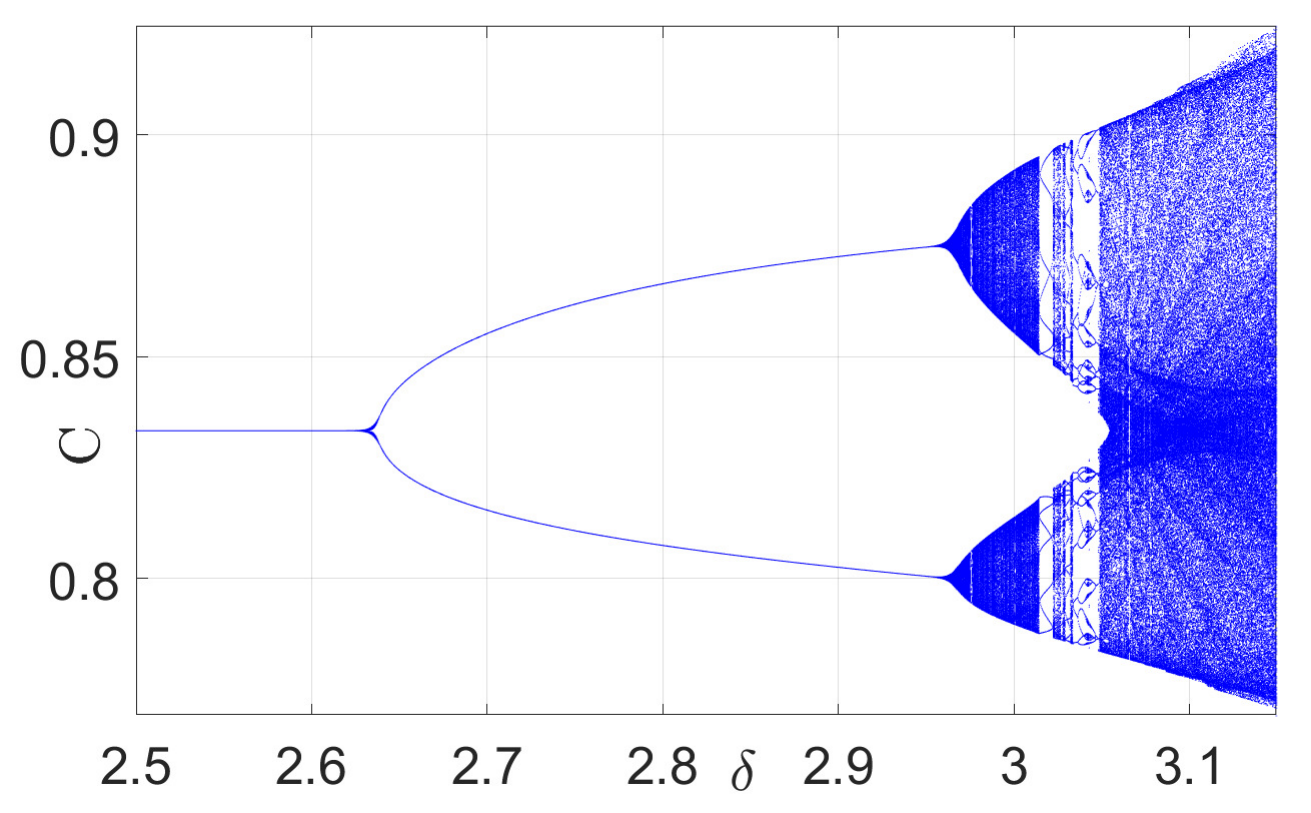}
	\end{subfigure}
	\caption{Bifurcation diagram showing the emergence of Neimark-Sacker bifurcation for Case 2.}
	\label{paper6-bif-2}
\end{figure}

\begin{remark}
	In this remark, we verify the nature of Neimark-Sacker bifurcation emergence from the fixed point $(0.06, 0.7).$ For the Case 1 parameter, we have eigenvalue 
	\begin{align*}
	\lambda,\bar{\lambda}=1+\frac{\mathcal{U}(\delta_2+\delta_2^*)}{2}\pm \frac{i (\delta_2+\delta_2^*)}{2}	\sqrt{4 \mathcal{V} - \mathcal{U}^2},	
	\end{align*}
	$\lambda=3.3318$ and $\bar{\lambda}=0.3002$ for an interior equilibrium with $\varsigma_1=4.2199+31.0638i,$ $\varsigma_2=-3.7491-16.7558i,$ $\varsigma_3=1.4140+3.2607i,$ and $\varsigma_4=-8.5369$. From $\lambda,$ $\bar{\lambda},$ $\varsigma_1,$ $\varsigma_2,$ $\varsigma_3,$ and $\varsigma_4$ we have the discriminatory value $\Psi=-125.1820 \neq 0.$ Hence, from Theorem \ref{paper6-Th3} the model experience Neimark-Sacker bifurcation. Moreover, $\Psi<0$ it indicates that repelling invariant closed curve bifurcates from $E^*(M^*, C^*).$
\end{remark}

\begin{figure}[h]\textit{}
	\centering
	\begin{subfigure}
		\centering
		\includegraphics[width=7.5cm,height=5cm]{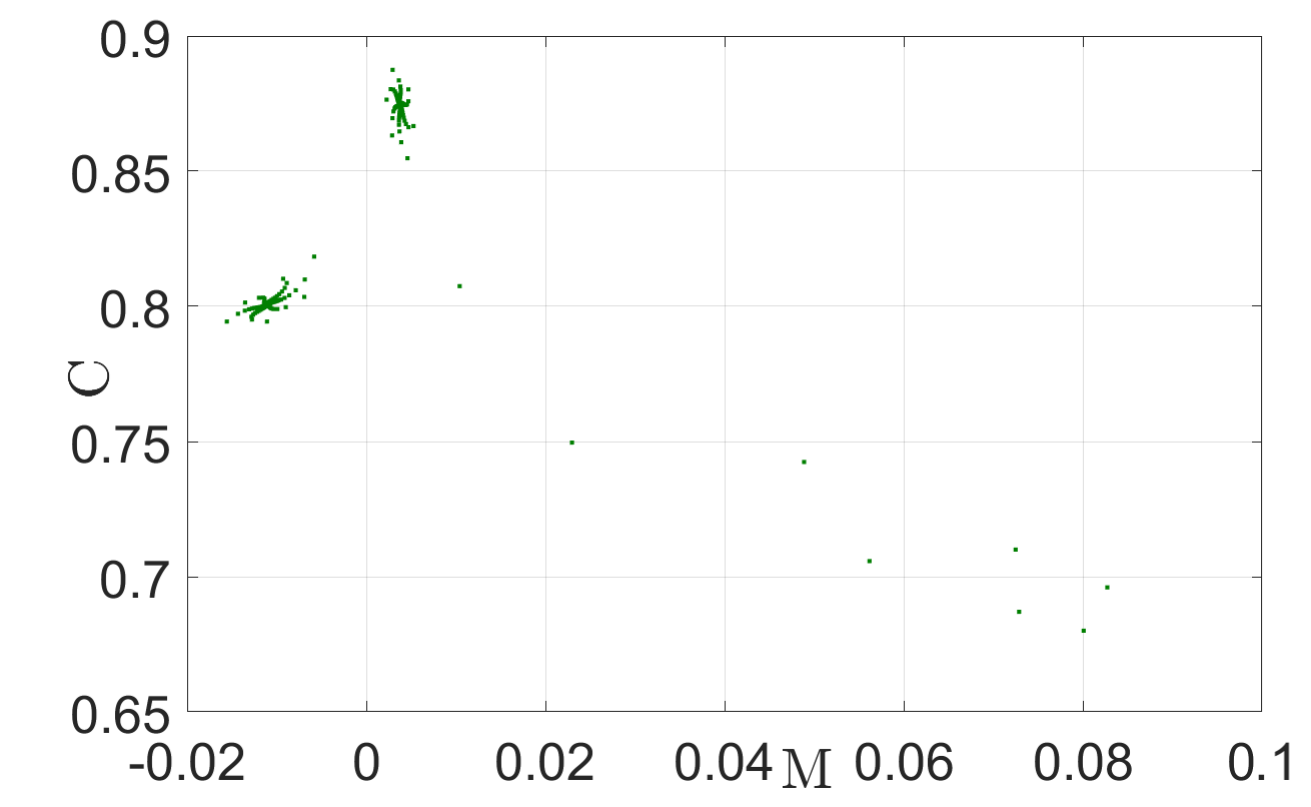}
	\end{subfigure}
	\begin{subfigure}
		\centering
		\includegraphics[width=7.5cm,height=5cm]{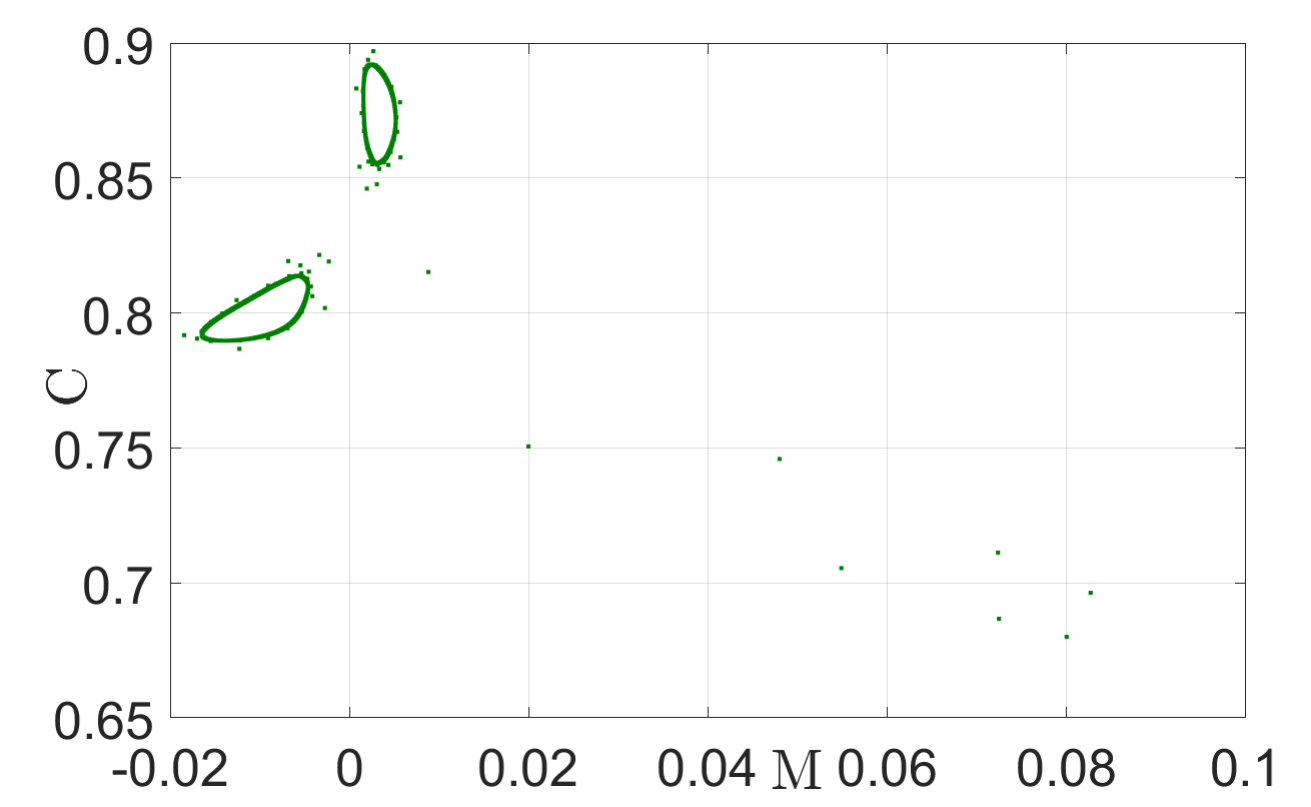}
	\end{subfigure}
	\caption{Phase portraits of Case 2 parameters for $\delta=2.95$ and $\delta=3$, respectively.}
	\label{paper6-bif-2-pp-3}
\end{figure}

\begin{figure}[h]\textit{}
	\centering
	\begin{subfigure}
		\centering
		\includegraphics[width=7.5cm,height=5cm]{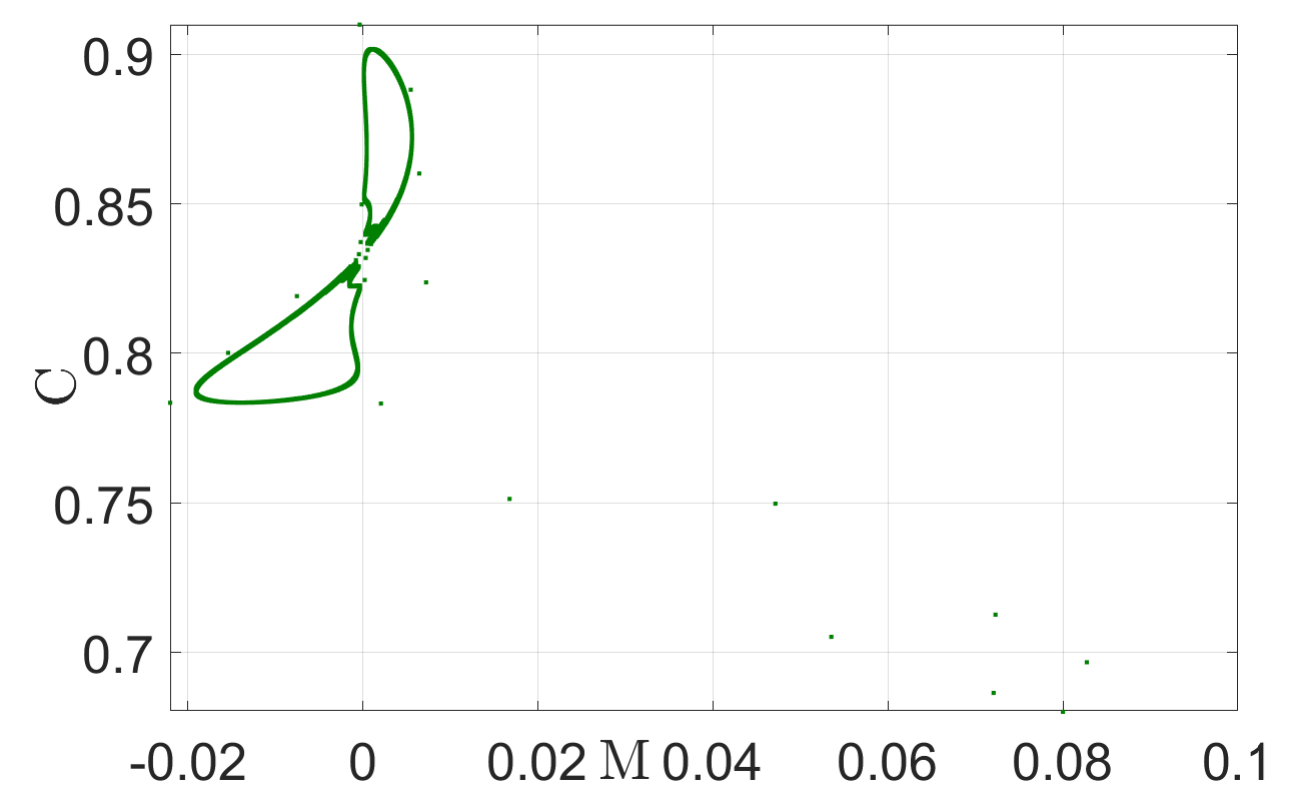}
	\end{subfigure}
	\begin{subfigure}
		\centering
		\includegraphics[width=7.5cm,height=5cm]{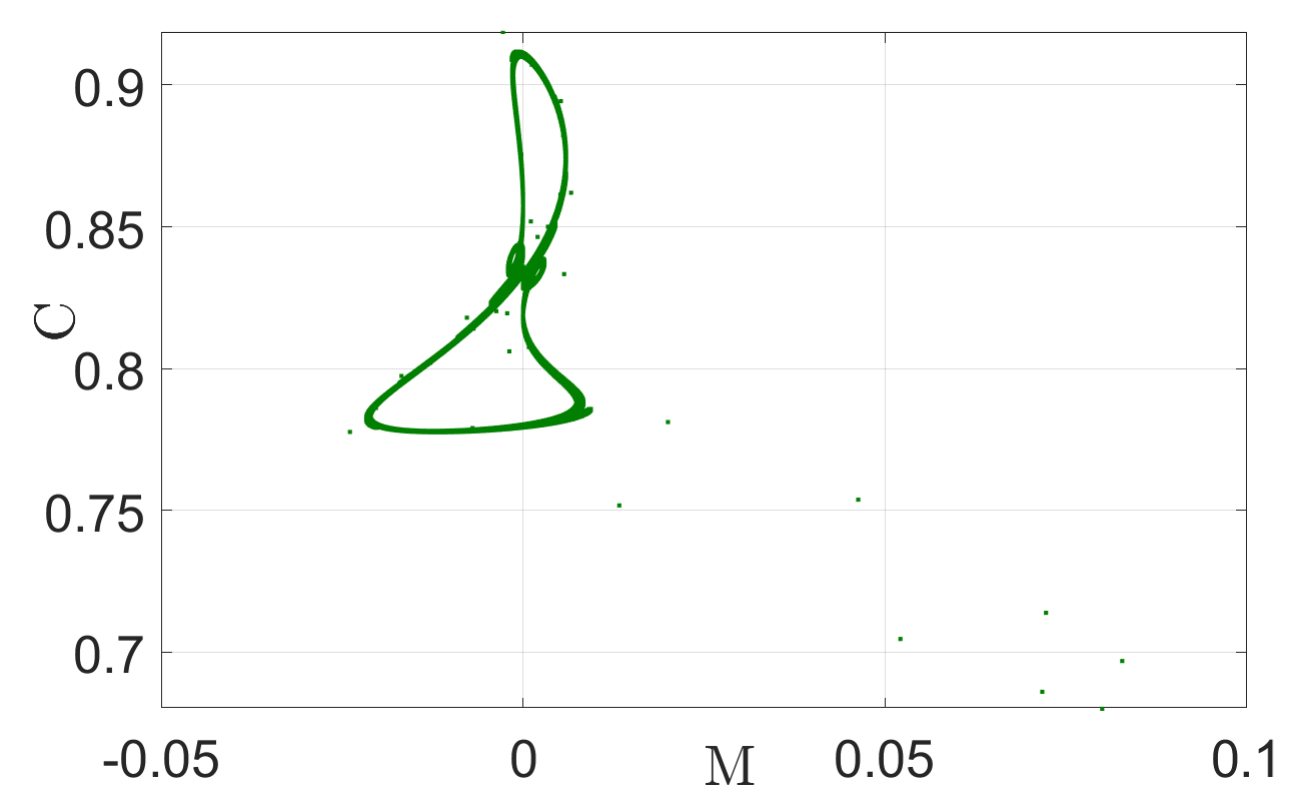}
	\end{subfigure}
	\caption{Phase portraits of Case 2 parameters for $\delta=3.05$ and $\delta=3.1$, respectively.}
	\label{paper6-bif-2-pp-3.1}
\end{figure}

\begin{remark}
	In this remark, we verify the nature of Neimark-Sacker bifurcation emergence from the fixed point $(0.08, 0.68).$ For the Case 2 parameter, we have eigenvalue $\lambda=3.3710$ and $\bar{\lambda}=0.2966$ for an interior equilibrium with $\varsigma_1=4.2725+26.3698i,$ $\varsigma_2=-3.8442-13.3660i,$ $\varsigma_3=1.5006+2.4943i,$ and $\varsigma_4=-7.0960$. From $\lambda,$ $\bar{\lambda},$ $\varsigma_1,$ $\varsigma_2,$ $\varsigma_3,$ and $\varsigma_4$ we have the discriminatory value $\Psi=-83.5551 \neq 0.$ Hence, from Theorem \ref{paper6-Th3} the model experience Neimark-Sacker bifurcation. Moreover, $\Psi<0$ it indicates that repelling invariant closed curve bifurcates from $E^*(M^*, C^*).$
\end{remark}

It is established that the model experiences Neimark-Sacker bifurcation for the given set of parameters in Case 2. We obtain the following controlled model that corresponds to the Case 2 parametric values to apply the OGY method:
\begin{align}\label{paper6-eq17}
	M_{n+1}&=M_n+\delta\left[0.6 M_n \left(1-\frac{M_n}{0.9}\right)+0.45M_nC_n-\frac{0.3 M_n}{M_n+s}+0.4M_ns\right],\nonumber\\
	C_{n+1}&=C_n+\delta\Big[0.6s -0.1 C_n-0.45 M_n C_n\Big],
\end{align}
where $\delta_0=1-\rho_1 (M_n-M^*)-\rho_2(C_n-C^*)$ and $(M^*, C^*)=(0.08, 0.28)$ is an unstable equilibrium of the model \eqref{paper6-mod11} corresponds to the given set of parameters.

Now, we have 
\begin{align*}
	J(M^*,C^*,\delta_0)=\begin{bmatrix}
	1.5050 & 0.0360\\
	-0.1260 & 1.2480
	\end{bmatrix},\quad
	B=\begin{bmatrix}
	0.0410\\
	0.0694
	\end{bmatrix},
\end{align*}
and \begin{align*}
	C=[B : JB]= \begin{bmatrix}
	0.0410 & 0.0641\\
	0.0694 & 0.0815
	\end{bmatrix}.
\end{align*}

Since, $\det(C)=-0.0011 \neq 0$, clearly $C$ is a rank 2 matrix. Hence, the model \eqref{paper6-mod2} is controllable. Consider the following Jacobian matrix of the controlled model:
\begin{align*}
	J-BH=\begin{bmatrix}
	1.5050-0.0409 \rho_1 & 0.036-0.0409 \rho_2\\
	-0.0694 \rho_1-0.126  & 1.248-0.0694\rho_2
	\end{bmatrix}.
\end{align*}

The characteristic equation of $J-BH$ is given by
\begin{align*}
	\lambda^2-\textrm{tr}(J-BH)\lambda+\det(J-BH)=0,
\end{align*}
where $\textrm{tr}(J-BH)=2.7529-0.0694 \rho_2-0.0409 \rho_1$ and $\det(J-BH)=1.8827-0.1096 \rho_2-0.0486 \rho_1.$ Moreover, the marginal stability of the controlled model \eqref{paper6-eq17} are given by
\begin{align*}
	\mathcal{L}_1=&-0.04862\rho_1-0.109665 \rho_2+0.88273,\\
	\mathcal{L}_2=&0.00765\rho_1+0.040226\rho_2-0.129767,\\
	\mathcal{L}_3=&-0.089578\rho_1-0.179106\rho_2+5.635693.
\end{align*}
	\begin{figure}[h!]
		\begin{center}
		\includegraphics[width=10cm,height=5.5cm]{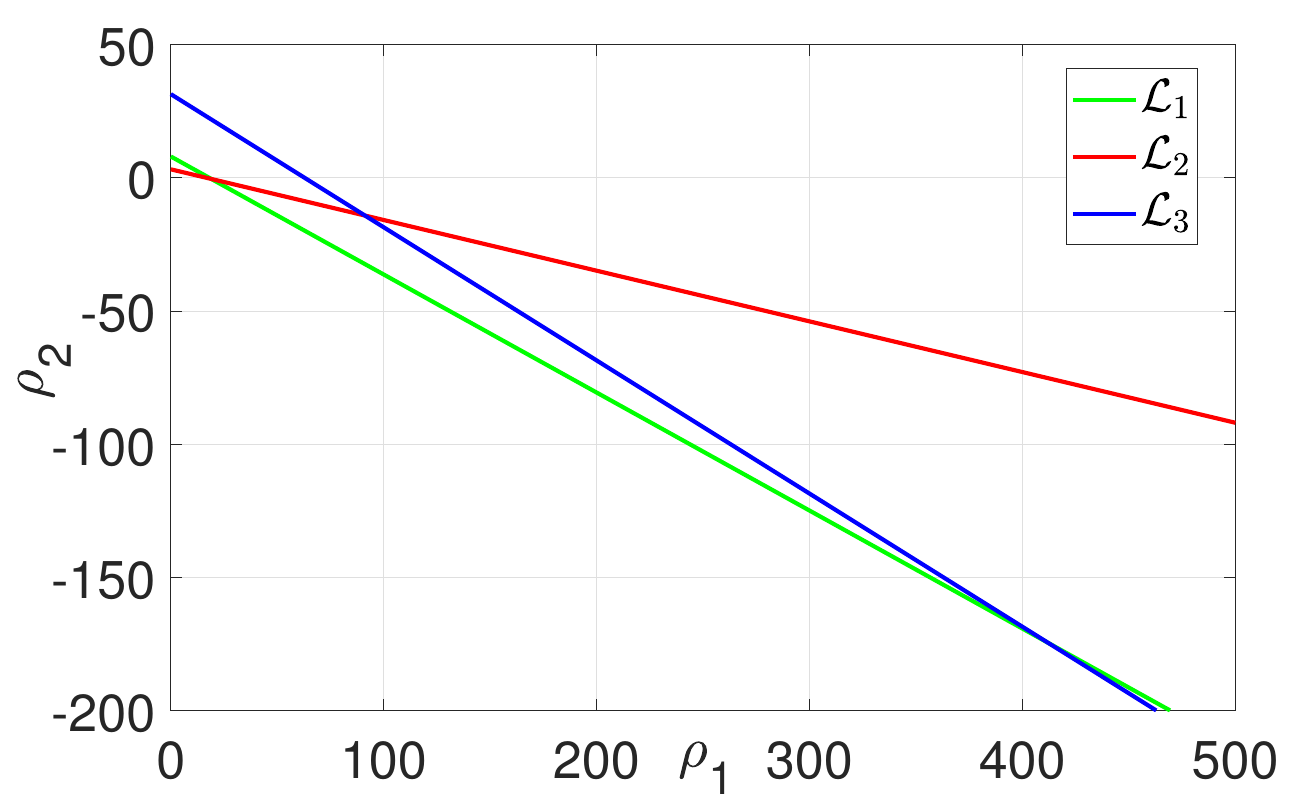}
		\caption{Stability region of the controlled model \eqref{paper6-eq17}.}
		\label{paper6-Marginal_Stability}
	\end{center}
	\end{figure}

\begin{remark}
	The triangular region covered by the straight lines $\mathcal{L}_1$, $\mathcal{L}_2$, and $\mathcal{L}_3$ in the $\rho_1\rho_2-$plane contains stable eigenvalues of the controlled model by taking the integral step size $\delta$ as the control parameter with a given parametric value. The stability region bounded by marginal lines $\mathcal{L}_1$, $\mathcal{L}_2,$ and $\mathcal{L}_3$ of model \eqref{paper6-eq17} is given in Figure \ref{paper6-Marginal_Stability}.
\end{remark}

\begin{remark}
	We have referenced the work of \cite{paper6-Li}, who examined the stability and Hopf-bifurcation analysis of the coral reef ODE model. Our study focuses on a slightly modified version of this model proposed by \cite{paper6-Li}. Our analysis has identified codimension-one period-doubling bifurcation and various complex dynamic behaviors, including reversal period-doubling bifurcation, period-4, 8, and 24 bubble bifurcations, and chaotic behavior. These findings highlight the richer dynamics exhibited by discrete-time models compared to continuous-time models, contributing to our understanding of chaotic behaviors in ecological systems.
\end{remark}

\section*{Conclusion}
This paper examines the complex dynamics of the discrete-time coral reef dynamical model incorporating macroalgae. We demonstrate that the model undergoes codimension-one bifurcation Neimark-Sacker bifurcation. Furthermore, the proposed model exhibits even more exciting dynamic behavior, including invariant cycles and chaotic sets. These results illustrate that the discrete model has far richer dynamics than the continuous model. We employ feedback control to stabilize the chaotic orbits at an unstable fixed point emerging through Neimark-Sacker bifurcation. Finally, we verify the theoretical results numerically with the help of suitable computational software. In the future, the authors will focus on the bifurcation analysis of a four-dimensional coral reef mathematical model that incorporates parrotfish interaction.

\section*{Statements and Declarations}
\noindent \textbf{Animal Research:} This article does not contain any studies with human participants or animals performed by any of the authors.\\
\noindent \textbf{Author Contribution:} M. Priyanka: Conceptualization, Methodology, Software, Writing - original draft, Writing - review \& editing. P. Muthukumar: Conceptualization, Validation, Supervision, Formal analysis, Writing - review \& editing.\\
\noindent \textbf{Code Availability:} Not applicable.\\
\noindent \textbf{Competing Interests:} The authors have no competing interests to declare that are relevant to the content of this article.\\
\noindent \textbf{Conflict of Interest:} The authors declare that they have no conflict of interest.\\
\noindent \textbf{Data Availability Statement:}
Data sharing does not apply to this article as no datasets were generated or analyzed during the current study.\\
\noindent \textbf{Financial/Non-financial Interests:} The authors have no relevant financial or non-financial interests to disclose.

\textbf{Appendix A}

\textbf{Proof of Theorem \ref{paper6-Th2}}

Let $x=M-M^*,$ $y=C-C^*$. Then shift the interior equilibrium point $E^*(M^*, C^*)$ of map \eqref{paper6-eq2} into the origin which given by
\begin{align}\label{paper6-eq3}
	\left\{ \begin{array}{l}
		x \longrightarrow i_{100}x+i_{010}y+i_{200}x^2+i_{110}xy+i_{300}x^3+i_{101}x\delta_1^*+i_{011}y\delta_1^*+i_{201}x^2\delta_1^*\\
		\qquad\quad\!\!+i_{111}xy\delta_1^*+O\Big((|x|+|y|+|\delta_1^*|)^4\Big),\\
		y \longrightarrow j_{100}x+j_{010}y+j_{110}xy+j_{101}x\delta_1^*+j_{011}y\delta_1^*+j_{111}xy\delta_1^*+O\Big((|x|+|y|+|\delta_1^*|)^4\Big),
	\end{array}
	\right.
\end{align}
where
$i_{100}=1+\delta r-\frac{2 \delta r M}{k}+\delta a C-\frac{\delta S g}{(M+S)^2}+\delta \gamma S,$ $i_{010}=\delta aM,$ $i_{200}=\frac{-\delta r}{k}+\frac{\delta S g}{(M+S)^3},$ $i_{110}=\delta a,$ $i_{101}=r-\frac{2rM}{k}+aC-\frac{Sg}{(M+S)^2}+\gamma S,$ $i_{300}=\frac{-\delta S g}{(M+S)^4},$ $i_{011}=aM,$ $i_{201}=\frac{-r}{k}+\frac{S g}{(M+S)^3},$ $i_{111}=a,$ $j_{100}=-\delta a C,$ $j_{010}=1+\delta \alpha S-\delta d-\delta aM,$ $j_{110}=-\delta a,$ $j_{101}=-aC,$ $j_{011}=\alpha S-d-aM,$ $j_{111}=-a,$ and $\delta=\delta_1$. Assume $i_{010} \neq 0.$

The invertible matrix is constructed as,
\begin{align*}
	P_1=\begin{bmatrix}
		i_{010} & i_{010}\\
		-1-i_{100} & \lambda_2-i_{100}
	\end{bmatrix}
\end{align*}
and by using this translation
\begin{align}\label{paper6-eq18}
	\begin{bmatrix}
		x\\
		y
	\end{bmatrix}=P_1\begin{bmatrix}
		\tilde{M}\\
		\tilde{C}
	\end{bmatrix},
\end{align}
which implies
\begin{align*}
	\begin{bmatrix}
		\tilde{M}\\
		\tilde{C}
	\end{bmatrix}=\frac{1}{|P_1|}\begin{bmatrix}
		\lambda_2 - i_{100} & -i_{010}\\
		1+i_{100} & i_{010}
	\end{bmatrix}\begin{bmatrix}
		x\\
		y
	\end{bmatrix},
\end{align*}
where $|P_1|=i_{010}(1+\lambda_2)$. Then

\begin{align*}
	\begin{bmatrix}
		\tilde{M}\\
		\tilde{C}
	\end{bmatrix}=	\frac{1}{i_{010}(1+\lambda_2)}\begin{bmatrix}
		(\lambda_2-i_{100})x-i_{010}y\\
		(1+i_{100})x+i_{010}y
	\end{bmatrix}.
\end{align*}
By map \eqref{paper6-mod2}, we have
\begin{align*}
	\begin{bmatrix}
		\tilde{M}\\
		\tilde{C}
	\end{bmatrix}\longrightarrow \frac{1}{i_{010}(1+\lambda_2)} \begin{bmatrix}
		(\lambda_2-i_{100}) \Big[ i_{100}x+i_{010}y+i_{200}x^2+i_{110}xy+i_{300}x^3+i_{101}x\delta_1^*+i_{011}y\delta_1^*&\\
		+i_{201}x^2\delta_1^*+i_{111}xy\delta_1^*+O\Big((|x|+|y|+|\delta_1^*|)^4\Big) \Big]&\\
		- i_{010}\Big[ j_{100}x+j_{010}y+j_{110}xy+j_{101}x\delta_1^*+j_{011}y\delta_1^*&\\
		\vspace{0.5cm}
		+j_{111}xy\delta_1^*+O\Big((|x|+|y|+|\delta_1^*|)^4\Big) \Big]&\\
		(1+i_{100})\Big[ i_{100}x+i_{010}y+i_{200}x^2+i_{110}xy+i_{300}x^3+i_{101}x\delta_1^*+i_{011}y\delta_1^*&\\
		+i_{201}x^2\delta_1^*+i_{111}xy\delta_1^*+O\Big((|x|+|y|+|\delta_1^*|)^4\Big) \Big]&\\
		+i_{010} \Big[ j_{100}x+j_{010}y+j_{110}xy+j_{101}x\delta_1^*+j_{011}y\delta_1^*&\\
		+j_{111}xy\delta_1^*+O\Big((|x|+|y|+|\delta_1^*|)^4\Big) \Big]&
	\end{bmatrix}.
\end{align*}
Now, the above map \eqref{paper6-eq3} can be rewritten as 
\begin{align}\label{paper6-eq4}
	\begin{bmatrix}
		\tilde{M}\\
		\tilde{C}
	\end{bmatrix} \longrightarrow \begin{bmatrix}
		-1 & 0\\
		0 & \lambda_2
	\end{bmatrix} \begin{bmatrix}
		\tilde{M}\\
		\tilde{C}
	\end{bmatrix}+\begin{bmatrix}
		g_1(\tilde{M}, \tilde{C}, \delta_1^*)\\
		g_2(\tilde{M}, \tilde{C}, \delta_1^*)
	\end{bmatrix},
\end{align}

where 
\begin{align*}
	g_1(\tilde{M}, \tilde{C}, \delta_1^*)=\frac{1}{i_{010}(1+\lambda_2)} \begin{bmatrix}
		\Big[(\lambda_2-i_{100})i_{110}-i_{010}j_{110} \Big]xy+\Big[(\lambda_2-i_{100})i_{101}-i_{010}j_{101} \Big]x\delta_1^*&\\
		+\Big[(\lambda_2-i_{100})i_{011}-i_{010}j_{011} \Big]y\delta_1^*+\Big[(\lambda_2-i_{100})i_{111}-i_{010}j_{111} \Big]xy\delta_1^*&\\
		+(\lambda_2-i_{100})i_{200}x^2+(\lambda_2-i_{100})i_{201}x^2\delta_1^*&\\
		+(\lambda_2 -i_{100})i_{300}x^3+O\Big((|x|+|y|+|\delta_1^*|)^4\Big)&
	\end{bmatrix},
\end{align*}

\begin{align*}
	g_2(\tilde{M}, \tilde{C}, \delta_1^*)=\frac{1}{i_{010}(1+\lambda_2)} \begin{bmatrix}
		\Big[(1+i_{100})i_{110}+i_{010}j_{110} \Big]xy+\Big[(1+i_{100})i_{101}+i_{010}j_{101} \Big]x\delta_1^*&\\
		+\Big[(1+i_{100})i_{011}+i_{010}j_{011} \Big]y\delta_1^*+\Big[(1+i_{100})i_{111}+i_{010}j_{111} \Big]xy\delta_1^*&\\
		+(1+i_{100})i_{200}x^2+(1+i_{100})i_{201}x^2\delta_1^*&\\
		+(1+i_{100})i_{300}x^3+O\Big((|x|+|y|+|\delta_1^*|)^4\Big)&
	\end{bmatrix},
\end{align*}
and from \eqref{paper6-eq18}, we have
\begin{align*}
	\begin{bmatrix}
		x\\
		y
	\end{bmatrix}=\begin{bmatrix}
		i_{010}\tilde{M}+i_{010}\tilde{C}\\
		-(1+i_{100})\tilde{M}+(\lambda_2-i_{100})\tilde{C}
	\end{bmatrix},
\end{align*}
which implies
\begin{align*}
	x&=i_{010}(\tilde{M}+\tilde{C}),\\
	y&=-(1+i_{100})\tilde{M}+(\lambda_2-i_{100})\tilde{C},\\
	xy&=i_{010}\Big[-(1+i_{100})\tilde{M}^2+(\lambda_2-i_{100})\tilde{C}^2+(\lambda_2-1-2i_{100})\tilde{M}\tilde{C}\Big],\\
	x^2&=i_{010}^2(\tilde{M}^2+\tilde{C}^2+2\tilde{M}\tilde{C}),\\
	x^3&=i_{010}^3(\tilde{M}^3+\tilde{C}^3+3\tilde{M}^2\tilde{C}+3\tilde{M}\tilde{C}^2).
\end{align*}
Next, the center manifold $C_M(0,0,0)$ of \eqref{paper6-eq4} about the origin in a small neighborhood of $\delta_1^*$ is determined. We can conclude that there is a center manifold $C_M(0,0,0)$ from the center manifold theorem given as follows:
\begin{align*}
	C_M(0,0,0)=\left\{(\tilde{M}, \tilde{C}, \delta_1^*)\in \mathbb{R}^3, \tilde{C}=\mathfrak{F}(\tilde{M}, \delta_1^*), \mathfrak{F}(0,0)=0, D\mathfrak{F}(0,0)=0 \right\},
\end{align*}
for $\tilde{M}$ and $\delta_1^*$ sufficiently small. Assume the following center manifold form:
\begin{align}\label{paper6-eq15}
	\mathfrak{F}(\tilde{M}, \delta_1^*)=a_1 \tilde{M}^2+a_2\tilde{M} \delta_1^*+a_3{\delta_1^*}^2+O\Big((|\tilde{M}|+|\delta_1^*|)^3\Big).
\end{align} 
Given $C_M(0,0,0)$ must satisfy  
\begin{align}\label{paper6-eq16}
	\mathfrak{F}\Big(-\tilde{M}+g_1(\tilde{M}, \mathfrak{F}(\tilde{M},\delta_1^*),\delta_1^*),\delta_1^*\Big)=\lambda_2\mathfrak{F}(\tilde{M},\delta_1^*)+g_2\Big(\tilde{M}, \mathfrak{F}(\tilde{M},\delta_1^*),\delta_1^*\Big).
\end{align}

Substituting \eqref{paper6-eq15} into \eqref{paper6-eq16}, and comparing coefficients of similar powers in obtained \eqref{paper6-eq16}, we get that
\begin{align*}
	a_1=&\frac{1}{i_{010}(1-\lambda_2^2)}\Big[-[(1+i_{100})i_{110}+i_{010}j_{110} ]i_{010}(1+i_{100})+(1+i_{100})i_{200}i_{010}^2 \Big],\\
	a_2=&\frac{-1}{i_{010}(1+\lambda_2)^2}\Big[[ (1+i_{100})i_{101}+i_{010}j_{101}]i_{010}-[(1+i_{100})i_{011}+i_{010}j_{011}  ](1+i_{100})\Big],\\
	a_3=&0.
\end{align*}

The map \eqref{paper6-eq4} that is restricted to the central manifold $C_M(0,0,0)$ is taken into consideration:
\begin{align}\label{paper6-eq5}
	G: \tilde{M} \longrightarrow -\tilde{M}+b_1\tilde{M}^2+b_2\tilde{M} \delta_1^*+b_3\tilde{M}^2\delta_1^*+b_4\tilde{M}{\delta_1^*}^2+b_5\tilde{M}^3+O\Big((|\tilde{M}|+|\delta_1^*|)^4\Big),
\end{align}
where 
\begin{align*}
	b_1=\frac{1}{i_{010}(1+\lambda_2)}&\Big[-[(\lambda_2-i_{100})i_{110}-i_{010}j_{110} ]i_{010}(1+i_{100})+(\lambda_2-i_{100})i_{200}i_{010}^2 \Big],\\
	b_2=\frac{1}{i_{010}(1+\lambda_2)}&\Big[[(1+i_{100})i_{101}+i_{010}j_{101}]i_{010}-[(1+i_{100})i_{011}+i_{010}j_{011}](1+i_{100}) \Big],\\
	b_3=\frac{1}{i_{010}(1+\lambda_2)}&\Big[[(\lambda_2-i_{100})i_{110}-i_{010}j_{110} ]i_{010}(\lambda_2-1-2i_{100})a_2+[(\lambda_2-i_{100})i_{101}-i_{010}j_{101} ]i_{010}a_1\\ &+[(\lambda_2-i_{100})i_{011}-i_{010}j_{011}](\lambda_2-i_{100})a_1-[(\lambda_2-i_{100})i_{111}-i_{010}j_{111} ]i_{010}(1+i_{100}) \\ &+2(\lambda_2-i_{100})i_{200}i_{010}^2a_2+(\lambda_2-i_{100})i_{201}i_{010}^2\Big],\\
	b_4=\frac{1}{i_{010}(1+\lambda_2)}&\Big[[(\lambda_2-i_{100})i_{101}-i_{010}j_{101} ]a_2i_{010}+[(\lambda_2-i_{100})i_{011}-i_{010}j_{011} ](\lambda_2-i_{100})a_2 \Big],\\
	b_5=\frac{1}{i_{010}(1+\lambda_2)}&\Big[[(\lambda_2-i_{100})i_{110}-i_{010}j_{110} ]i_{010}(\lambda_2-1-2i_{100} )a_1+2(\lambda_2-i_{100})i_{200}i_{010}^2a_1\\
	&+(\lambda_2 - i_{100}) i_{300} i_{010}^3 \Big].
\end{align*}

Two discriminatory values $\Omega_1$ and $\Omega_2$, must not be zero for the map \eqref{paper6-eq5} to experience a flip bifurcation.
\begin{align*}
	\Omega_1&=\left.\left(2\frac{\partial^2G}{\partial \tilde{M} \partial \delta_1^*}+\frac{\partial G}{\partial \delta_1^*} \frac{\partial G}{\partial \tilde{M}} \right)\right|_{(0,0)}=2b_2,\\
	\Omega_2&=\left.\left(\frac{1}{2}\left(\frac{\partial^2 G}{\partial \tilde{M}^2} \right)^2+\frac{1}{3}\left(\frac{\partial^3 G}{\partial \tilde{M}^3} \right) \right)\right|_{(0,0)}=2b_1^2+2b_5.
\end{align*}

\textbf{Appendix B}

\textbf{Proof of Theorem \ref{paper6-Th3}}

Let $x=M-M^*$, $y=C-C^*$. Then shift $E^*(M^*, C^*)$ of map \eqref{paper6-eq2} into the origin which is given by
\begin{align}\label{paper6-eq7}
	\begin{bmatrix}
		x\\
		y
	\end{bmatrix}\longrightarrow \begin{bmatrix}
		i_{100}x+i_{010}y+i_{200}x^2+i_{020}y^2+i_{110}xy+i_{120}xy^2+i_{030}y^3+O\Big((|x|+|y|)^4\Big)\\
		j_{100}x+j_{010}y+j_{020}y^2+j_{110}xy+O\Big((|x|+|y|)^4\Big)
	\end{bmatrix},
\end{align}
where $i_{100}$, $i_{010}$, $i_{200},$ $i_{020},$ $i_{110}$, $i_{120},$ $i_{300},$ $j_{100}$, $j_{010}$, $j_{020},$ $j_{110}$, are given previously by substituting $\delta$ for $\delta_2+\delta_2^*.$

Consider the following characteristic equation for the linearization of the map \eqref{paper6-eq7} about the origin:
\begin{align*}
	\lambda^2+\mathcal{A}(\delta_2^*)\lambda+\mathcal{B}(\delta_2^*)=0,	
\end{align*}
where
\begin{align*}
	\mathcal{A}(\delta_2^*)&=-2-\mathcal{U}(\delta_2+\delta_2^*),\\
	\mathcal{B}(\delta_2^*)&=1+\mathcal{U}(\delta_2+\delta_2^*)+\mathcal{V}(\delta_2+\delta_2^*)^2.
\end{align*}

Given that the parameters are  $(r, k, a, g, \gamma, \alpha, d, \delta_2) \in \mathcal{N}$, the eigenvalues of $(0, 0)$ are a pair of complex conjugate numbers called $\lambda$ and $\bar{\lambda}$ with modulus 1 by Theorem \ref{paper6-Th1}, where
\begin{align*}
	\lambda, \bar{\lambda}=\frac{-\mathcal{A}(\delta_2^*)\pm i\sqrt{4\mathcal{B}(\delta_2^*)-\mathcal{A}^2(\delta_2^*)}}{2}
\end{align*}
and 
\begin{align*}
	|\lambda|_{\delta_2^*=0}=\sqrt{\mathcal{B}(0)}=1, \quad \left. \frac{d |\lambda|}{d\delta_2^*} \right|_{\delta_2^*=0}=-\frac{\mathcal{U}}{2}>0.
\end{align*}

Additionally, it is required that when $\delta_2^*=0$, $\lambda^r,$ $\bar{\lambda}^r \neq 1$ $(r=1,2,3,4)$ which is equivalent to $\mathcal{A}(0) \neq -2, 0, 1, 2$. Note that $(r, k, a, g, \gamma, \alpha, d, \delta_2) \in \mathcal{N}.$ Thus, $\mathcal{A}(0) \neq -2, 2$, we only need to show that $\mathcal{A}(0) \neq 0,1,$ which implies
\begin{align}\label{paper6-eq14}
	\mathcal{U}^2 \neq 2\mathcal{V}, 3\mathcal{V}.
\end{align}

When $\delta_2^* = 0$ and \eqref{paper6-eq14} is true, the eigenvalues of fixed point $(0, 0)$ of \eqref{paper6-eq7} do not lie in the position where the coordinate axes and unit circle coincide. Then, we investigate the normal form of \eqref{paper6-eq7} at $\delta_2^*=0$.

Let $\sigma_1=1+\frac{\mathcal{U} \delta_2}{2}$ and $\sigma_2=\frac{\delta_2}{2}\sqrt{4\mathcal{V}-\mathcal{U}^2}$. The invertible matrix is constructed as,
\begin{align*}
	P_2=\begin{bmatrix}
		i_{010} & 0\\
		\sigma_1-i_{100} & -\sigma_2
	\end{bmatrix}
\end{align*}
and by using this translation
\begin{align}\label{paper6-eq8}
	\begin{bmatrix}
		x\\
		y
	\end{bmatrix}=P_2\begin{bmatrix}
		\tilde{M}\\
		\tilde{C}
	\end{bmatrix},
\end{align}
which implies
\begin{align*}
	\begin{bmatrix}
		\tilde{M}\\
		\tilde{C}
	\end{bmatrix}=\frac{1}{|P_2|}\begin{bmatrix}
		-\sigma_2 & 0\\
		i_{100}-\sigma_1 &i_{010}
	\end{bmatrix}\begin{bmatrix}
		x\\
		y
	\end{bmatrix},
\end{align*}
where $|P_2|=-\sigma_2 i_{010}$. Now,
\begin{align*}
	\begin{bmatrix}
		\tilde{M}\\
		\tilde{C}
	\end{bmatrix}=\frac{1}{-\sigma_2i_{010}}\begin{bmatrix}
		-\sigma_2x\\
		(i_{100}-\sigma_1)x+i_{010}y
	\end{bmatrix}.
\end{align*}
By map \eqref{paper6-mod2}, we have
\begin{align*}
	\begin{bmatrix}
		\tilde{M}\\
		\tilde{C}
	\end{bmatrix}\longrightarrow \frac{1}{-\sigma_2i_{010}}\begin{bmatrix}
		-\sigma_2\Big[i_{100}x+i_{010}y+i_{200}x^2+i_{020}y^2+i_{110}xy+i_{120}xy^2\\		\vspace{0.5cm}
		+i_{030}y^3+O\Big((|x|+|y|)^4\Big)\Big]\\
		(i_{100}-\sigma_1)\Big[i_{100}x+i_{010}y+i_{200}x^2+i_{020}y^2+i_{110}xy+i_{120}xy^2\\
		+i_{030}y^3+O\Big((|x|+|y|)^4\Big)\Big]\\
		+i_{010}\Big[j_{100}x+j_{010}y+j_{020}+j_{110}xy+O\Big((|x|+|y|)^4\Big)\Big]
	\end{bmatrix}.
\end{align*}
Now, the above matrix can be rewritten as 
\begin{align}\label{paper6-eq19}
	\begin{bmatrix}
		\tilde{M}\\
		\tilde{C}
	\end{bmatrix}\!\longrightarrow \! \frac{1}{-\sigma_2i_{010}}\!\begin{bmatrix}
		-\sigma_2i_{100} & -\sigma_2i_{010}\\
		(i_{100}-\sigma_1)i_{100}+i_{010}j_{100} & (i_{100}-\sigma_1)i_{010}+i_{010}j_{010}
	\end{bmatrix}
	\begin{bmatrix}
		x\\
		y
	\end{bmatrix}\!+\!\begin{bmatrix}
		g_3(\tilde{M}, \tilde{C})\\
		g_4(\tilde{M}, \tilde{C})
	\end{bmatrix},
\end{align}
where 
\begin{align*}
	g_3(\tilde{M}, \tilde{C})=\frac{1}{-\sigma_2i_{010}}&\Bigg[-\sigma_2\Big[i_{200}x^2++i_{020}y^2+i_{110}xy+i_{120}xy^2+i_{030}y^3+O\Big((|x|+|y|)^4\Big) \Big]\Bigg],\\
	g_4(\tilde{M},\tilde{C})=\frac{1}{-\sigma_2i_{010}}&\Bigg[(i_{100}-\sigma_1)\Big[i_{200}x^2++i_{020}y^2+i_{110}xy+i_{120}xy^2+i_{030}y^3+O\Big((|x|+|y|)^4\Big)\Big]\\
	&+i_{010}\Big[j_{020}y^2+j_{110}xy+O\Big((|x|+|y|)^4\Big)\Big]\Bigg],
\end{align*}
and from \eqref{paper6-eq8}, we have
\begin{align*}
	\begin{bmatrix}
		x\\
		y
	\end{bmatrix}=\begin{bmatrix}
		i_{010}\tilde{M}\\
		(\sigma_2-i_{100})\tilde{M}-\sigma_2\tilde{C}
	\end{bmatrix},
\end{align*}
which implies,
\begin{align*}
	x&=i_{010}\tilde{M},\\
	y&=(\sigma_1-i_{100})\tilde{M}-\sigma_2\tilde{C},\\
	xy&=i_{010}(\sigma_1-i_{100})\tilde{M}^2-\sigma_2i_{010}\tilde{M}\tilde{C},\\
	x^2&=i_{010}^2\tilde{M}^2,\\
	y^2&=(\sigma_1-i_{100})^2\tilde{M}^2+\sigma_2^2\tilde{C}^2-2\sigma_2(\sigma_1-i_{100})\tilde{M}\tilde{C},\\
	xy^2&=i_{010}(\sigma_1-i_{100})^2\tilde{M}^3+i_{010}\sigma_2^2\tilde{M}\tilde{C}^2-2\sigma_2i_{010}(\sigma_1-i_{100})\tilde{M}^2\tilde{C},\\
	y^3&=(\sigma_1-i_{100})^3\tilde{M}^3-\sigma_2^3\tilde{C}^3+3\sigma_2^2(\sigma_1-i_{100})\tilde{M}\tilde{C}^2-3\sigma_2(\sigma_1-i_{100})^2\tilde{M}^2\tilde{C},
\end{align*}
and
\begin{align*}
	&g_{3\tilde{M}\tilde{M}}=\frac{-1}{\sigma_2i_{010}}\Big[-\sigma_2[2i_{200}i_{010}^2+2i_{020}(\sigma_1-i_{100})^2+2i_{110}i_{010}(\sigma_1-i_{100})+i_{120}[6i_{010}(\sigma_1-i_{100})^2\tilde{M}\\
	&\qquad\qquad\qquad\quad-4\sigma_2i_{010}(\sigma_1-i_{100})\tilde{C}]+i_{030}[6(\sigma_1-i_{100})^3\tilde{M}-6\sigma_2(\sigma_1-i_{100})^2\tilde{C}]] \Big],\\
	&g_{3\tilde{C}\tilde{C}}=\frac{-1}{\sigma_2i_{010}}\Big[-\sigma_2[2 i_{020}\sigma_2^2+2i_{120}i_{010}\sigma_2^2\tilde{M}+i_{030}[-6\sigma_2^3\tilde{C}+6\sigma_2^2(\sigma_1-i_{100})\tilde{M}]] \Big],\\
	&g_{3\tilde{M}\tilde{C}}=\frac{-1}{\sigma_2i_{010}}\Big[-\sigma_2[-2i_{020}\sigma_2(\sigma_1-i_{100})-i_{110}i_{010}\sigma_2+i_{120}[2i_{010}\sigma_2^2\tilde{C}-4\sigma_2i_{010}(\sigma_1-i_{100})\tilde{M}]\\
	&\qquad\qquad\qquad\quad+i_{030}[6\sigma_2^2(\sigma_1-i_{100})\tilde{C}-6\sigma_2(\sigma_1-i_{100})^2\tilde{M}]] \Big],\\
	&g_{3\tilde{M}\tilde{M}\tilde{M}}=\frac{-1}{\sigma_2i_{010}} \Big[-\sigma_2 [6i_{120}i_{010}(\sigma_1-i_{100})^2+6i_{030}(\sigma_1-i_{100})^3 ] \Big],\\
	&g_{3\tilde{M}\tilde{C}\tilde{C}}=\frac{-1}{\sigma_2i_{010}} \Big[-\sigma_2 [2i_{120}i_{010}\sigma_2^2+6i_{030}\sigma_2^2(\sigma_1-i_{100}) ] \Big],\\
	&g_{3\tilde{M}\tilde{M}\tilde{C}}=\frac{-1}{\sigma_2i_{010}} \Big[-\sigma_2 [-4i_{120}\sigma_2i_{010}(\sigma_1-i_{100})-6i_{030}\sigma_2(\sigma_1-i_{100})^2 ] \Big],\\
	&g_{3\tilde{C}\tilde{C}\tilde{C}}=\frac{-1}{\sigma_2i_{010}} \Big[-\sigma_2 [-6i_{030}\sigma_2^3] \Big],\\
	&g_{4\tilde{M}\tilde{M}}=\frac{-1}{\sigma_2i_{010}}\Big[(i_{100}-\sigma_1)[2i_{010}^2i_{200}+2i_{020}(\sigma_1-i_{100})^2+2i_{110}i_{010}(\sigma_1-i_{100})\\
	&\qquad\qquad\qquad\quad+i_{120}[6i_{010}(\sigma_1-i_{100})^2\tilde{M}-4\sigma_2i_{010}(\sigma_1-i_{100})\tilde{C}]\\
	&\qquad\qquad\qquad\quad+i_{030}[6(\sigma_1-i_{100})^3\tilde{M}-6\sigma_2(\sigma_1-i_{100})^2\tilde{C}]]\\
	&\qquad\qquad\qquad\quad+i_{010}[2j_{020}(\sigma_1-i_{100})^2+2j_{110}i_{010}(\sigma_1-i_{100})]\Big],\\
	&g_{4\tilde{C}\tilde{C  }}=\frac{-1}{\sigma_2i_{010}}\Big[(i_{100}-\sigma_1)[2i_{020}\sigma_2^2+2i_{120}i_{010}\sigma_2^2\tilde{M}-6i_{030}\sigma_2^3\tilde{C}+6i_{030}\sigma_2^2(\sigma_1-i_{100})\tilde{M}]\\
	&\qquad\qquad\qquad\quad+i_{010}[2j_{020}\sigma_2^2] \Big],\\
	&g_{4\tilde{M}\tilde{C}}=\frac{-1}{\sigma_2i_{010}}\Big[(i_{100}-\sigma_1)[-2i_{020}\sigma_2(\sigma_1-i_{100})-i_{110}i_{010}\sigma_2+2i_{120}i_{010}\sigma_2^2\tilde{C}\\
	&\qquad\qquad\qquad\quad-4i_{120}\sigma_2i_{010}(\sigma_1-i_{100})\tilde{M}+6i_{030}\sigma_2^2(\sigma_1-i_{100})\tilde{C}-6i_{030}\sigma_2(\sigma_1-i_{100})^2\tilde{M}]\\
	&\qquad\qquad\qquad\quad+i_{010}[-2j_{020}\sigma_2(\sigma_1-i_{100})-j_{110}i_{010}\sigma_2] \Big],\\
	&g_{4\tilde{M}\tilde{M}\tilde{M}}=\frac{-1}{\sigma_2i_{010}}\Big[(i_{100}-\sigma_1)[6i_{120}i_{010}(\sigma_1-i_{100})^2+6 i_{030}(\sigma_1-i_{100})^3 ] \Big],\\
	&g_{4\tilde{C}\tilde{C}\tilde{C}}=\frac{-1}{\sigma_2i_{010}}\Big[(i_{100}-\sigma_1)[-6i_{030}\sigma_2^3] \Big],\\
	&g_{4\tilde{M}\tilde{M}\tilde{C}}=\frac{-1}{\sigma_2i_{010}}\Big[(i_{100}-\sigma_1)[2i_{120}i_{010}\sigma_2^2+6i_{030}\sigma_2^2(\sigma_1-i_{100})] \Big],\\
	&g_{4\tilde{M}\tilde{C}\tilde{C}}=\frac{-1}{\sigma_2i_{010}}\Big[(i_{100}-\sigma_1)[-4i_{120}\sigma_2i_{010}(\sigma_1-i_{100})-6i_{030}\sigma_2(\sigma_1-i_{100})^2 ] \Big].
\end{align*}

The following discriminatory value $\Psi$, must not be zero for the map \eqref{paper6-eq19} to experience Neimark-Sacker bifurcation:
\begin{align*}
	\Psi=\left. \left[-\textrm{Re} \left(\frac{(1-2\lambda)\bar{\lambda}^2}{1-\lambda}\varsigma_1\varsigma_2 \right) -\frac{1}{2}|\varsigma_2|^2-|\varsigma_3|^2+\textrm{Re}(\bar{\lambda}\varsigma_4) \right] \right|_{\delta_2^*=0},
\end{align*}
where
\begin{align*}
	\varsigma_1&=\frac{1}{8}\Big[(g_{3\tilde{M}\tilde{M}}-g_{3\tilde{C}\tilde{C}}+2g_{4\tilde{M}\tilde{C}})+i(g_{4\tilde{M}\tilde{M}}-g_{4\tilde{C}\tilde{C}}-2g_{3\tilde{M}\tilde{C}} ) \Big],\\
	\varsigma_2&=\frac{1}{4}\Big[(g_{3\tilde{M}\tilde{M}}+g_{3\tilde{C}\tilde{C}})+i(g_{4\tilde{M}\tilde{M}}+g_{4\tilde{C}\tilde{C}})\Big],\\
	\varsigma_3&=\frac{1}{8}\Big[(g_{3\tilde{M}\tilde{M}}-g_{3\tilde{C}\tilde{C}}-2g_{4\tilde{M}\tilde{C}})+i(g_{4\tilde{M}\tilde{M}}-g_{4\tilde{C}\tilde{C}}+2g_{3\tilde{M}\tilde{C}})\Big],\\
	\varsigma_4&=\frac{1}{16}\Big[(g_{3\tilde{M}\tilde{M}\tilde{M}}+g_{3\tilde{M}\tilde{C}\tilde{C}}+g_{4\tilde{M}\tilde{M}\tilde{C}}+g_{4\tilde{C}\tilde{C}\tilde{C}})+i(g_{4\tilde{M}\tilde{M}\tilde{M}}+g_{4\tilde{M}\tilde{C}\tilde{C}}-g_{3\tilde{M}\tilde{M}\tilde{C}}-g_{3\tilde{C}\tilde{C}\tilde{C}} ) \Big].
\end{align*}

\end{document}